\theoremstyle{plain}
\newtheorem{thm}{Theorem}[section]
\newtheorem{prop}[thm]{Proposition}
\newtheorem{lemma}[thm]{Lemma}
\newtheorem{cor}[thm]{Corollary}
\theoremstyle{definition}
\newtheorem{dfn}[thm]{Definition}
\newtheorem{note}[thm]{Notation}
\newtheorem{hypothesis}[thm]{Hypothesis}
\theoremstyle{remark}
\newtheorem{rem}[thm]{Remark}
\newcommand{\rH}{\mathrm{H}}
\newcommand{\rab}{\mathrm{ab}}
\newcommand{\rWeil}{\mathrm{Weil}}
\newcommand{\rth}{\mathrm{th}}
\newcommand{\ps}[1]{[\![ #1 ]\!]}
\begin{document}

\title{Cup products on curves over finite fields}

\date{\today}

\author[Bleher]{ Frauke M. Bleher}
\address{Frauke M. Bleher, Dept. of Mathematics\\Univ. of Iowa\\Iowa City, IA 52242, USA}
\email{frauke-bleher@uiowa.edu}
\thanks{The first author was supported in part by NSF Grant No.\ DMS-1801328
and Simons Foundation Grant 960170.
The second author was supported in part by NSF SaTC Grant No.
CNS-1701785 and Simons Foundation Grant MP-TSM-00002279.
}

\author[Chinburg]{Ted Chinburg} 
\address{Ted Chinburg, Dept. of Mathematics\\ Univ. of Pennsylvania \\ Philadelphia, PA 19104, USA}
\email{ted@math.upenn.edu}

\subjclass[2020]{Primary 14F20, 11G20; Secondary 14H45}
\keywords{cup product \and multilinear product \and Weil pairing}

\begin{abstract}
Suppose $k$ is a finite field, that $C$ is a smooth projective geometrically irreducible curve over $k$, and that $n$ is a positive integer not divisible by the characteristic of $k$. In this paper we compute cup products  of elements of the \'etale cohomology groups $\rH^1(C,\mathbb{Z}/n)$ and  $\rH^1(C,\mu_n)$. Over the algebraic closure $\overline{k}$ of $k$, such cup products are connected to values of the Weil pairing on the $n$-torsion of the Jacobian of $\overline{C} = \overline{k} \otimes_k C$ by using a fixed isomorphism between $\mathbb{Z}/n$ and $\mu_n$ over  $\overline{C}$.  Over $k$, such cup products are more subtle due to the fact that they take values in the group $\rH^2(C,\mu_n)=\mathrm{Pic}(C)/n\cdot \mathrm{Pic}(C)$ rather than in the group $\rH^2(\overline{C},\mu_n) = \mathbb{Z}/n$.
\end{abstract}

\maketitle

\section{Introduction}
\label{s:intro}
\setcounter{equation}{0}

Let $\overline{C}$ be a smooth projective irreducible curve of positive genus over an algebraically closed field $\overline{k}$.  Let $n$ be a positive integer not divisible by the characteristic of $\overline{k}$ and let $\tilde{\mu}_n$ be the group of $n^{\rth}$ roots of unity in $\overline{k}$. The Weil pairing 
\begin{equation}
\label{eq:Weildef2}
\langle\  , \ \rangle_{\rWeil}: \quad \mathrm{Jac}(\overline{C})[n] \times \mathrm{Jac}(\overline{C})[n] \to \tilde{\mu}_n
\end{equation}
on the $n$-torsion of the Jacobian of $\overline{C}$ is a classical topic in algebraic geometry.  It arises from the principal polarization of this Jacobian, and has applications to the study of Galois actions on 
$\mathrm{Jac}(\overline{C})[n]$ (see \cite{Serre}, \cite{MayleWang}) and to pairing-based cryptography (see \cite{Joux},\cite{BS},\cite{Miller}).

  It follows from \cite[\S20]{Mumford} and \cite[Dualit\'e \S3]{SGA4.5} that  $\langle\  , \ \rangle_{\rWeil}$
may be identified with the cup product 
\begin{equation}
\label{eq:Weildef}
\rH^1(\overline{C},\mu_n) \times \rH^1(\overline{C},\mu_n) \to \rH^2(\overline{C},\mu_n^{\otimes 2}) = \tilde{\mu}_n.
\end{equation}
Let $\mathcal{F}_n$ be $\mathbb{Z}/n$ or $\mu_n$. Since $\rH^1(\overline{C},\mu_n)=\rH^1(\overline{C},\mathbb{Z}/n)\otimes\tilde{\mu}_n$, we can use any choice of isomorphism $\mathbb{Z}/n\to \tilde{\mu}_n$ to compute the cup product 
\begin{equation}
\label{eq:ohWeil!}
\  \cup_{\overline{C}} \ : \quad\;\, \rH^1(\overline{C},\mathbb{Z}/n) \times \rH^1(\overline{C},\mathcal{F}_n) \to \rH^2(\overline{C},\mathcal{F}_n) 
\end{equation}
from the Weil pairing. Note that $\rH^2(\overline{C},\mu_n) = \mathbb{Z}/n$.

The object of this paper is to generalize classical formulas for the Weil pairing used to compute the pairing (\ref{eq:ohWeil!}) by replacing $\overline{C}$ by a smooth projective geometrically irreducible curve $C$ of positive genus $g(C)$ over a finite field $k$. We give expressions for the cup product and the triple product
\begin{eqnarray}
\label{eq:cupprodbi}
\rH^1(C,\mathbb{Z}/n) \times \rH^1(C,\mu_n) & \xrightarrow{\cup} & \rH^2(C,\mu_n)=\mathrm{Pic}(C)/n\cdot \mathrm{Pic}(C) \quad \mbox{and}\\
\label{eq:cupprodtri}
 \rH^1(C, \mathbb{Z}/n) \times  \rH^1(C,\mathbb{Z}/n) \times \rH^1(C,\mu_n) &\xrightarrow{\cup}& \rH^3(C,\mu_n) =  \mathbb{Z}/n
\end{eqnarray}
when $n$ is relatively prime to $\#k$.

 In our first result, Theorem \ref{thm:MSformula}, we will povide general formulas for (\ref{eq:cupprodbi}) and (\ref{eq:cupprodtri}) based on work of McCallum and Sharifi in \cite{MS}.  
The computation of the terms arising in Theorem \ref{thm:MSformula} is in general 
more difficult than that of various expressions for the Weil pairing  \cite{Howe,Miller,LuRo2015}.  

The computation of the above pairings can be reduced to the case in which $n$ is a power of a prime $\ell$.  There are then infinitely many closed points  $O$ of $C$ of degree $d(O)$ prime to $\ell \cdot (\ell - 1)$.     For $\mathcal{F}_n = \mathbb{Z}/n$ or $\mathcal{F}_n = \mu_n$, we describe below a direct sum decomposition 
\begin{equation}
\label{eq:sumonetwo}
\rH^1(C,\mathcal{F}_n) = \rH^1(k,\mathcal{F}_n) \oplus \rH^1(C,\mathcal{F}_n)_O
\end{equation}
in which $\rH^1(C,\mathcal{F}_n)_O$ consists of classes which are normalized at $O$ in an appropriate sense. 
The pairing of $\rH^1(k,\mathbb{Z}/n)$ with $\rH^1(k,\mu_n)$ is trivial since $\rH^2(k,\mu_n) = 0$.  We will describe below how to compute the pairing (\ref{eq:cupprodbi}) when exactly one of the terms belongs to $\rH^1(k,\mathcal{F}_n)$ using  a new homomorphism which we call the Legendre derivative of Frobenius.  Finally suppose that the terms in (\ref{eq:cupprodbi})  belong to $\rH^1(C,\mathbb{Z}/n)_O$ and $\rH^1(C,\mu_n)_O$. 
We show in Theorem \ref{thm:EllThm} that when $C$ has genus $1$, the pairing on normalized classes can be computed using only the Weil pairing of the restrictions of these classes to $\rH^1(\overline{C},\mathcal{F}_n)$.  The proof does not in fact use the arithmetic approach in Theorem \ref{thm:MSformula}, but is instead a delicate homological calculation. In \S  \ref{s:genustwo} we prove that the counterpart of Theorem \ref{thm:EllThm} for higher genus curves need not hold by constructing an infinite family of genus $2$ counterexamples.   

To state our results precisely, we will  identify
 $\rH^1(C,\mu_n)$ with the quotient of 
$$D(C) = \{b \in k(C)^*:  \mathrm{div}_C(b) \in n \cdot \mathrm{Div}(C)\}$$ 
by the subgroup $(k(C)^*)^n$;  see   Lemma \ref{lem:secondgroup}.  Let $[b] \in \rH^1(C,\mu_n)$ be the cohomology class determined by $b \in D(C)$.  There is a surjection $\rH^1(C,\mu_n) \to \mathrm{Pic}(C)[n]$ which sends $[b]$ to the $n$-torsion divisor class $[\mathrm{div}_C(b)/n]$ of $\mathrm{div}_C(b)/n$.   We will view $\mathrm{Pic}(C)[n]$ as a subgroup of $\mathrm{Pic}(\overline{C})[n] = \mathrm{Jac}(\overline{C})[n]$ when $\overline{C} = \overline{k} \otimes_k C$ and $\overline{k}$ is a fixed algebraic closure of $k$.  
Let $\mathfrak{b} = \mathrm{div}_C(b)/n \in \mathrm{Div}(C)$.

Suppose $\alpha\in \rH^1(C,\mathbb{Z}/n) = \mathrm{Hom}(\pi_1(C),\mathbb{Z}/n) = \mathrm{Hom}(\mathrm{Pic}(C),\mathbb{Z}/n)$.  Let $\pi_\alpha:C_\alpha \to C$ be the \'etale morphism of smooth projective curves over $k$ associated to $\alpha$.  Define $d_\alpha =[k(C_\alpha):k(C)]$ and let $\alpha'\in \mathrm{Hom}(\mathrm{Gal}(k(C_\alpha)/k(C)),\mathbb{Z}/n)$ be the injection associated to $\pi_\alpha$.

\begin{thm}
\label{thm:MSformula}  
Suppose $\alpha\in \rH^1(C,\mathbb{Z}/n)$  and $b\in D(C)$ are as above. 
\begin{enumerate}
\item[(i)]  Let $\sigma \in \mathrm{Gal}(k(C_\alpha)/k(C))$ be an element such that $\alpha'(\sigma)\in\mathbb{Z}/n$ generates the images of $\alpha'$ and $\alpha$.   There is an element $c \in k(C_\alpha)$ such that $b = \mathrm{Norm}_{k(C_\alpha)/k(C)}(c)$. There is a divisor $\mathfrak{c} \in \mathrm{Div}(C_\alpha)$ such that 
$$\mathrm{div}_{C_\alpha}(c) = \frac{n}{d_\alpha} \pi_\alpha^* (\mathfrak{b}) + (1 - \sigma) \cdot \mathfrak{c}.$$
Writing $[\mathfrak{d}]$ for the class in $\mathrm{Pic}(C)$ of a divisor $\mathfrak{d}$, we have
$$\alpha \cup [b] = \alpha'(\sigma) \cdot \left([\mathrm{Norm}_{k(C_\alpha)/k(C)}(\mathfrak{c})] + \frac{n}{2} [\mathfrak{b}]\right)$$
in $\mathrm{Pic}(C)/n\cdot \mathrm{Pic}(C) = \rH^2(C,\mu_n)$.

\item[(ii)]  Under the assumptions of $\mathrm{(i)}$, suppose $\tau \in \rH^1(C,\mathbb{Z}/n) = \mathrm{Hom}(\mathrm{Pic}(C),\mathbb{Z}/n)$. Then
$$\tau \cup \alpha \cup [b] = \tau\left(\alpha'(\sigma) \cdot\left([\mathrm{Norm}_{k(C_\alpha)/k(C)}(\mathfrak{c})] + \frac{n}{2} [\mathfrak{b}]\right) \right)$$
in $\mathbb{Z}/n = \rH^3(C,\mu_n)$.
\end{enumerate}
\end{thm}

Because cup products are bilinear, to compute them we make the following assumption for the remainder of this introduction.

\begin{hypothesis}
\label{hyp:primepower}
The integer $n=\ell^z$ is a positive power of a prime $\ell$ that is relatively prime to $\#k$. 
\end{hypothesis}

We next define normalized classes in $\rH^1(C,\mathbb{Z}/n)$ and in $\rH^1(C,\mu_n)$.  These arise in Miller's work in \cite{Miller} on the efficient computation of the Weil pairing associated to $\overline{C} = \overline{k} \otimes_k C$.

Since $C$ is geometrically irreducible over $k$,  there is a divisor of  $C$ that has degree $1$ by \cite[Cor. 5, \S VII.5]{Weil}. This implies there must be a closed point $O$ for which the degree $d(O)$ is prime to $\ell\cdot(\ell-1)$. 
An element $\alpha\in \mathrm{Hom}(\mathrm{Pic}(C),\mathbb{Z}/n)$ will be said to be normalized at $O$ if $O$ splits to the curve $C_\alpha$ from part (i) of Theorem \ref{thm:MSformula}. On the other hand, an element $b \in D(C)$ will be said to be normalized at $O$  if the Laurent expansion of $b$  with respect to a uniformizing parameter at $O$ has leading coefficient in $(k(O)^*)^n$.  We will show in Lemma \ref{lem:normalize} that this definition does not depend on the choice of uniformizer.  We denote the $\mathbb{Z}/n$-module of classes $\alpha$ in $\rH^1(C,\mathbb{Z}/n)$ that are normalized  at $O$ by $\rH^1(C,\mathbb{Z}/n)_O$, and we denote the $\mathbb{Z}/n$-module of classes $[b]$ represented by $b \in D(C)$ that are normalized at $O$ by $\rH^1(C,\mu_n)_{O}$.

We now fix a closed point $O$ of $C$ for which the degree $d(O)$ is prime to $\ell\cdot(\ell-1)$.
The structure morphism $C \to \mathrm{Spec}(k)$ induces homomorphisms  $\rH^1(k,\mathbb{Z}/n) \to \rH^1(C,\mathbb{Z}/n)$ and $ \rH^1(k,\mu_n) \to \rH^1(C,\mu_n)$.  
Here $\rH^1(k,\mathbb{Z}/n) = \mathrm{Hom}(\mathrm{Gal}(\overline{k}/k),\mathbb{Z}/n)$ is isomorphic to $\mathbb{Z}/n$ whereas the group $\rH^1(k,\mu_n)$ is isomorphic to $k^*/(k^*)^n$. We will show in Lemma \ref{lem:normalize} that there are direct sum decompositions (\ref{eq:sumonetwo}) for both $\mathcal{F}_n=\mathbb{Z}/n$ and $\mathcal{F}_n=\mu_n$.

We now consider the restriction of the cup product pairing (\ref{eq:cupprodbi}) to these summands.
As noted earlier, the restriction of the arguments of $(\ref{eq:cupprodbi})$ to $\rH^1(k,\mathbb{Z}/n)$ and
$\rH^1(k,\mu_n)$ is trivial since $\rH^2(k,\mu_n) = 0$.

Under Hypothesis \ref{hyp:primepower}, let $T_\ell(C)$ be the $\ell$-adic Tate module of $C$ and let $\Phi_{k,C}$ be the arithmetic Frobenius acting on $T_\ell(C)$.
There exists a unique automorphism $A$ of $\mathbb{Q}_\ell \otimes_{\mathbb{Z}_\ell} T_\ell(C)$ such that $\Phi_{k,C} = 1 + n A $.  Multiplication by $A^{-1}$ defines a homomorphism
\begin{equation}
\label{eq:legendre}
d\mathcal{L}: \mathrm{Pic}^0(C)[n] = \frac{T_\ell(C) \cap A T_\ell(C)}{n A T_\ell(C)} \to \frac{T_\ell(C)}{n T_\ell(C) + nA T_\ell(C)} = \mathrm{Pic}^0(C)/n\cdot\mathrm{Pic}^0(C) 
\end{equation}
which we call the Legendre derivative of Frobenius.
In Theorem \ref{thm:legendre1}, we will show how the Legendre derivative $d\mathcal{L}$ determines the value of $(\ref{eq:cupprodbi})$ when the first argument is restricted to $\rH^1(k,\mathbb{Z}/n)$. Similarly, in Theorem \ref{thm:legendre2}, we will discuss the case when the second argument is restricted to $\rH^1(k,\mu_n)$. 

We finally consider the restriction of both arguments of $(\ref{eq:cupprodbi})$ to normalized classes in $\rH^1(C,\mathbb{Z}/n)_O$ and $\rH^1(C,\mu_n)_O$. 
For all $\alpha \in \rH^1(C,\mathbb{Z}/n)_O$ and $[b] \in \rH^1(C,\mu_n)_{O}$, we define
\begin{equation}
\label{eq:almostWeil}
(\alpha,[b])_{[O]} = \frac{1}{d(O)} \cdot(\overline{\alpha} \cup_{\overline{C}} \overline{[b]}) \cdot   [O] 
\quad \mbox{in}\quad \mathrm{Pic}(C)/n \cdot\mathrm{Pic}(C) = \rH^2(C,\mu_n)
\end{equation}
where $[O]$ is the class in $\mathrm{Pic}(C)$ of the divisor of degree $d(O)$ defined by the point $O$ and $\overline{\alpha} \cup_{\overline{C}} \overline{[b]} \in \mathbb{Z}/n$ is the value of the pairing in $(\ref{eq:ohWeil!})$ when $\overline{\alpha}$ and $\overline{[b]}$ are the restrictions of $\alpha$ and $[b]$ to $\rH^1(\overline{C},\mathbb{Z}/n)$ and $\rH^1(\overline{C},\mu_n)$, respectively.
 
We prove the following result for curves $C$ of arbitrary positive genus.

\begin{thm}
\label{thm:cupsizeresult}  
Suppose $n  = \ell^z$ and that $O$ is a closed point of degree $d(O)$ prime to $\ell\cdot (\ell-1)$.   The following two conditions are equivalent:
\begin{enumerate}
\item[(i)]  For all $\alpha \in \rH^1(C,\mathbb{Z}/n)_O$ and $[b] \in \rH^1(C,\mu_n)_{O}$, we have $\alpha\cup[b]=(\alpha,[b])_{[O]}$ from $(\ref{eq:almostWeil})$.
\item[(ii)] The triple product on normalized classes 
$$\rH^1(C, \mathbb{Z}/n)_O \times  \rH^1(C,\mathbb{Z}/n)_O \times \rH^1(C,\mu_n)_O \to \rH^3(C,\mu_n) =  \mathbb{Z}/n$$
is zero.
 \end{enumerate}
\end{thm}

When the genus of $C$ is one, we show that condition (ii) of Theorem \ref{thm:cupsizeresult} is satisfied. In other words, we prove the following result.

\begin{thm}
\label{thm:EllThm}  
Under the hypotheses of Theorem $\ref{thm:cupsizeresult}$, assume further that $g(C)=1$. Then for all $\alpha \in \rH^1(C,\mathbb{Z}/n)_O$ and $[b] \in \rH^1(C,\mu_n)_{O}$, we have
$$\alpha\cup[b]=(\alpha,[b])_{[O]}\quad \mbox{ from $(\ref{eq:almostWeil})$.}$$
\end{thm}

One key ingredient in the proof of Theorem \ref{thm:EllThm} is to show that when $\alpha,[b],\overline{\alpha} ,\overline{[b]}$ are as in (\ref{eq:almostWeil}) and $g(C)=1$, then $\overline{\alpha} \cup_{\overline{C}} \overline{[b]}=0$ in $\rH^2(\overline{C},\mu_n)$ implies $\alpha\cup[b]=0$ in $\rH^2(C,\mu_n)$; see Theorem \ref{thm:WeilCup}.

In \S \ref{s:genustwo}, we will give infinitely many curves $C$ of genus $2$ for which the equivalent conditions of Theorem \ref{thm:cupsizeresult}  do not hold when $n = \ell = 3$.  The size of the image of the cup product in an analogous number theoretic situation arising from the theory of cyclotomic fields is discussed by McCallum and Sharifi in \cite{MS}.  

\medbreak

We now outline the contents of the sections of this paper. In \S \ref{s:sharififormula}, we will set up the notation and assumptions for the remainder of the paper and we will collect some results on \'etale cohomology groups. Moreover, we will show Theorem \ref{thm:MSformula}. For the remainder of the article, we assume Hypothesis \ref{hyp:primepower}. In \S \ref{s:complements}, we will prove the direct sum decomposition (\ref{eq:sumonetwo}) in Lemma \ref{lem:normalize} and we will show in Theorem \ref{thm:basechange} how to reduce to the case when $d(O)=1$. In \S \ref{s:arithcovers}, we will prove various results that are necessary for analyzing the formulas in Theorem \ref{thm:MSformula}.  In \S \ref{s:restrictedFrobenius}, we will consider restrictions of the cup product maps (\ref{eq:cupprodbi}) and  (\ref{eq:cupprodtri})  that are connected to the derivative of the arithmetic Frobenius. In particular, in Proposition \ref{prop:Aut}, we will introduce the Legendre derivative of Frobenius, and we will prove Theorems \ref{thm:legendre1} and \ref{thm:legendre2}.  In \S \ref{s:arbitrarygenus}, we will consider curves of arbitrary positive genus and we will prove Theorem \ref{thm:cupsizeresult}. In \S \ref{s:ellcurves} and \S \ref{s:cohomological}, we will focus on the genus one case and we will prove Theorem \ref{thm:EllThm} by proving that condition (ii) of Theorem \ref{thm:cupsizeresult} holds in this case.  In \S  \ref{s:genustwo}, we will construct an infinite family of curves of genus two for which the equivalent conditions of  Theorem \ref{thm:cupsizeresult} do not hold when $n = \ell = 3$;  see Theorem \ref{thm:genus2examples}.

\section*{Acknowledgements} 
The authors would like to thank D. Boneh, M. Bright, H. W. Lenstra Jr., R. Sharifi, A. Silverberg and A. Venkatesh for conversations related to this article.

\section{\'Etale cohomology groups}
\label{s:sharififormula}
\setcounter{equation}{0}

Throughout this paper we will assume that $C$ is a smooth projective geometrically irreducible curve
of genus $g(C)\ge 1$ over a finite field $k$ of order $q$. We will suppose $n$ is a positive integer that is relatively prime to $q$. Let $k(C)$ be the function field of $C$ and let $\overline{k(C)}$ be a separable closure of $k(C)$ containing a fixed algebraic closure $\overline{k}$ of $k$.  Let $\mathrm{Div}(C)$ be the divisor group of $C$, let $\mathrm{Pic}(C)$ be the Picard group of $C$, and let $\mathrm{Pic}^0(C)$ be the group of divisor classes of degree $0$. 

Let  $\overline{C}=\overline{k} \otimes_k C$ and let $\eta$ be a geometric point of $\overline{C}$, which can then also be viewed as a geometric point of $C$. We have an exact sequence 
\begin{equation}
\label{eq:fundamental}
1 \to \pi_1(\overline{C},\eta) \to \pi_1(C,\eta) \to \mathrm{Gal}(\overline{k}/k) \to 1
\end{equation}
of \'etale fundamental groups in which $\mathrm{Gal}(\overline{k}/k)$ is isomorphic to the profinite completion $\hat{\mathbb{Z}}$ of $\mathbb{Z}$.  There are natural isomorphisms
\begin{equation}
\label{eq:naturall1}
\rH^1(k,\mathbb{Z}/n) = \mathrm{Hom}(\mathrm{Gal}(\overline{k}/k),\mathbb{Z}/n)
\end{equation}
and 
\begin{equation}
\label{eq:naturall2}
\rH^1(C,\mathbb{Z}/n) = \mathrm{Hom}(\pi_1(C,\eta),\mathbb{Z}/n)\quad\mbox{and}\quad\rH^1(\overline{C},\mathbb{Z}/n) = \mathrm{Hom}(\pi_1(\overline{C},\eta),\mathbb{Z}/n).
\end{equation}
In fact, we have from  \cite[\S2.1.2]{AchingerThesis} (see also \cite[\S3]{Achinger2015}) the following result:

\begin{lemma}
\label{lem:achinger}  
$\mathrm{(Achinger)}$ For all $i\ge 0$, and all locally constant constructible sheaves $\mathcal{F}$ of $\mathbb{Z}/n$-modules, the natural homomorphisms 
$$\rH^i(\pi_1(C,\eta),\mathcal{F}_{\eta}) \to \rH^i(C,\mathcal{F})\quad \mbox{and} \quad
\rH^i(\pi_1(\overline{C},\eta),\mathcal{F}_{\eta}) \to \rH^i(\overline{C},\mathcal{F})$$
are isomorphisms when $\mathcal{F}_{\eta}$ is the stalk of $\mathcal{F}$ at  $\eta$.  
\end{lemma}

Let $\mathcal{F}$ be a constructible sheaf of finite groups of order dividing $n$ on $C$, and let $\overline{\mathcal{F}}$ denote its restriction to $\overline{C}$. The following lemma results directly from the spectral sequence
\begin{equation}
\label{eq:specseq}
\rH^i(\mathrm{Gal}(\overline{k}/k),\rH^j(\overline{C},\overline{\mathcal{F}})) \Rightarrow \rH^{i+j}(C,\mathcal{F})
\end{equation}
together with the fact that $\mathrm{Gal}(\overline{k}/k) \cong \hat{\mathbb{Z}}$ has cohomological dimension one.

\begin{lemma} 
\label{lem:firstgroup}  
One has a split exact sequence of  $\mathbb{Z}/n$-modules
\begin{equation}\label{eq:h1seq}
0 \to \mathrm{Hom}(\mathrm{Gal}(\overline{k}/k),\mathbb{Z}/n) \to \rH^1(C,\mathbb{Z}/n) \to \rH^1(\overline{C},\mathbb{Z}/n)^{\mathrm{Gal}(\overline{k}/k)} \to 0
\end{equation}
in which  $\mathrm{Hom}(\mathrm{Gal}(\overline{k}/k),\mathbb{Z}/n)$ is cyclic of order $n$ and $\rH^1(\overline{C},\mathbb{Z}/n)$ has order  dividing $n^{2g(C)}$.  The sequence $(\ref{eq:h1seq})$ is the $\mathbb{Z}/n$-module dual of the sequence
\begin{equation}
\label{eq:picer}
0 \to \mathrm{Pic}^0(C)/n \cdot\mathrm{Pic}^0(C) \to \mathrm{Pic}(C)/n \cdot\mathrm{Pic}(C) \to \mathbb{Z}/n \to 0
\end{equation}
resulting from the degree map $\mathrm{Pic}(C) \to \mathbb{Z}$ and the Artin map $\mathrm{Pic}(C) \to \pi_1(C,\eta)^{\rab}$.
\end{lemma}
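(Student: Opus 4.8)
The plan is to read the sequence off the Hochschild--Serre spectral sequence, to record the orders directly, and then to identify the sequence with the $\mathbb{Z}/\ell$-dual of (\ref{eq:picer}) through the Artin map. First, via Lemma~\ref{lem:milne} and (\ref{eq:naturall1})--(\ref{eq:naturall2}), the spectral sequence $\rH^p(\mathrm{Gal}(\overline{k}/k),\rH^q(\overline{C},\mathbb{Z}/\ell))\Rightarrow\rH^{p+q}(C,\mathbb{Z}/\ell)$ coming from (\ref{eq:fundamental}) has $E_2^{p,q}=0$ for $p\ge 2$ since $\mathrm{Gal}(\overline{k}/k)\cong\hat{\mathbb{Z}}$ has cohomological dimension one; hence all higher differentials vanish and the edge sequence in total degree $1$, using $\rH^0(\overline{C},\mathbb{Z}/\ell)=\mathbb{Z}/\ell$ and (\ref{eq:naturall1}), is precisely (\ref{eq:h1seq}). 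Being a short exact sequence of $\mathbb{F}_\ell$-vector spaces it splits; one natural splitting comes from restriction to the closed point $0_C$, because the composite of the inflation $\mathrm{Hom}(\mathrm{Gal}(\overline{k}/k),\mathbb{Z}/\ell)\hookrightarrow\rH^1(C,\mathbb{Z}/\ell)$ with this restriction is, after identifying $\rH^1(k,\mathbb{Z}/\ell)\cong\rH^1(k(0_C),\mathbb{Z}/\ell)\cong\mathbb{Z}/\ell$, multiplication by $d(0_C)$, which is a unit modulo $\ell$. Finally, continuous homomorphisms $\hat{\mathbb{Z}}\to\mathbb{Z}/\ell$ factor through $\hat{\mathbb{Z}}/\ell\cong\mathbb{Z}/\ell$, so $\mathrm{Hom}(\mathrm{Gal}(\overline{k}/k),\mathbb{Z}/\ell)$ is cyclic of order $\ell$, and $\rH^1(\overline{C},\mathbb{Z}/\ell)\cong\rH^1(\overline{C},\mu_\ell)\cong\mathrm{Jac}(\overline{C})[\ell]\cong(\mathbb{Z}/\ell)^{2g}$, using $\ell\neq\mathrm{char}(k)$ because $\ell\mid q-1$.

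It remains to identify (\ref{eq:h1seq}) with $\mathrm{Hom}(-,\mathbb{Z}/\ell)$ applied to (\ref{eq:picer}). By (\ref{eq:naturall2}), $\rH^1(C,\mathbb{Z}/\ell)=\mathrm{Hom}(\pi_1(C,\eta)^{\rab},\mathbb{Z}/\ell)$, and unramified geometric class field theory for $C/k$ (see e.g.\ Serre, \textit{Groupes alg\'ebriques et corps de classes}) identifies $\pi_1(C,\eta)^{\rab}$ with the profinite completion of $\mathrm{Pic}(C)$ via the Artin map; in particular the Artin map induces an isomorphism $\mathrm{Pic}(C)/\ell\cdot\mathrm{Pic}(C)\cong\pi_1(C,\eta)^{\rab}/\ell$, so $\rH^1(C,\mathbb{Z}/\ell)\cong\mathrm{Hom}(\mathrm{Pic}(C)/\ell\cdot\mathrm{Pic}(C),\mathbb{Z}/\ell)$, the $\mathbb{Z}/\ell$-dual of the middle term of (\ref{eq:picer}). (Sequence (\ref{eq:picer}) is exact on the left because $0\to\mathrm{Pic}^0(C)\to\mathrm{Pic}(C)\to\mathbb{Z}\to 0$ has torsion-free cokernel, hence remains left exact after $\otimes\,\mathbb{Z}/\ell$.) Under this duality the inclusion $\mathrm{Hom}(\mathrm{Gal}(\overline{k}/k),\mathbb{Z}/\ell)\hookrightarrow\rH^1(C,\mathbb{Z}/\ell)$, being inflation along $\pi_1(C,\eta)\to\mathrm{Gal}(\overline{k}/k)$, is dual to the composite $\mathrm{Pic}(C)/\ell\to\pi_1(C,\eta)^{\rab}/\ell\to\mathrm{Gal}(\overline{k}/k)/\ell\cong\mathbb{Z}/\ell$; since the Artin map sends a closed point $x$ to a Frobenius lying over $\mathrm{Frob}_k^{d(x)}$, this composite is reduction modulo $\ell$ of the degree map, with kernel $\mathrm{Pic}^0(C)/\ell\cdot\mathrm{Pic}^0(C)$ by (\ref{eq:picer}). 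Hence the sub-and-quotient structures of (\ref{eq:h1seq}) and of the $\mathbb{Z}/\ell$-dual of (\ref{eq:picer}) coincide; in particular this forces $\rH^1(\overline{C},\mathbb{Z}/\ell)^{\mathrm{Gal}(\overline{k}/k)}\cong\mathrm{Hom}(\mathrm{Pic}^0(C)/\ell\cdot\mathrm{Pic}^0(C),\mathbb{Z}/\ell)$ compatibly, which may alternatively be checked from the Weil-pairing self-duality of $\mathrm{Jac}(\overline{C})[\ell]$, or viewed as an instance of Poincar\'e--Lefschetz duality for $C/k$ under which the degree map is the trace $\rH^2(C,\mu_\ell)=\mathrm{Pic}(C)/\ell\cdot\mathrm{Pic}(C)\to\mathbb{Z}/\ell$.

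The only substantive step is the last one: one must invoke class field theory for the curve in the sharp form that the Artin map induces $\mathrm{Pic}(C)/\ell\cdot\mathrm{Pic}(C)\cong\pi_1(C,\eta)^{\rab}/\ell$, and then verify that the inflated constant classes on the cohomology side correspond to the degree direction on the Picard side, i.e.\ that the two filtrations line up. Exactness, splitting and the order count are formal.
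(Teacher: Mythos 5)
Your proposal is correct and follows essentially the same route as the paper: the paper derives (\ref{eq:h1seq}) from the Hochschild--Serre spectral sequence together with $\mathrm{cd}(\hat{\mathbb{Z}})=1$, and the duality with (\ref{eq:picer}) rests on the same Artin-map identification $\rH^1(C,\mathbb{Z}/\ell)=\mathrm{Hom}(\mathrm{Pic}(C),\mathbb{Z}/\ell)$ recorded in Lemma \ref{lem:secondgroup}. Your write-up merely supplies the standard details (left-exactness of (\ref{eq:picer}), the explicit splitting via $0_C$, the order counts) that the paper leaves implicit.
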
 
  
The next lemma gives a description of $\rH^1(C,\mathbb{Z}/n)$ and of $\rH^i(C,\mu_n)$ for $i=1,2$.

\begin{lemma}
\label{lem:secondgroup}
There are natural isomorphisms
\begin{eqnarray}
\label{eq:h11}
\rH^1(C,\mathbb{Z}/n) &=& \mathrm{Hom}(\mathrm{Pic}(C),\mathbb{Z}/n)\quad\mbox{and}\\
\label{eq:h12} 
\rH^2(C,\mu_n) &=& \mathrm{Pic}(C)/n \cdot \mathrm{Pic}(C).
\end{eqnarray}
Define 
$$D(C) = \{b \in k(C)^*:  \mathrm{div}_C(b) \in n \cdot \mathrm{Div}(C)\}.$$
Then there is a  natural isomorphism
\begin{equation}
\label{eq:h13}
\rH^1(C,\mu_n) = D(C)/(k(C)^*)^n.
\end{equation}
Moreover, for $b\in D(C)$, choose an $n^{th}$ root $b^{1/n}$ of $b$ in $\overline{k(C)}$. Then the class $[b]\in D(C)/(k(C)^*)^n$ corresponds to the class of the one-cocycle $c_{b^{1/n}}:\pi_1(C,\eta)\to\tilde{\mu}_n$ defined by $c_{b^{1/n}}(\sigma)=\sigma(b^{1/n})/b^{1/n}$ for all $\sigma\in\pi_1(C,\eta)$.
\end{lemma}

\begin{proof}
The group $\mathrm{Hom}(\mathrm{Pic}(C),\mathbb{Z}/n) $ is identified with $\mathrm{Hom}(\pi_1(C,\eta),\mathbb{Z}/n)$, as in (\ref{eq:h11}), via the Artin map. Since $\rH^2(C,\mathbb{G}_m) = 0$,  $\mathrm{Pic}(C)/n \cdot \mathrm{Pic}(C) $ is identified with $\rH^2(C,\mu_n)$, as in (\ref{eq:h12}), via the Kummer sequence. 

By \cite[p. 125]{Milne}, $\rH^1(C,\mu_n)$ is identified with isomorphism classes of pairs $(L,\phi)$ in which $L$ is a line bundle on $C$ and $\phi$ is an isomorphism $O_C \to L^{\otimes n}$.  Given $b \in D(C)$, define $\mathfrak{b} = \mathrm{div}_C(b)/n$, and let $L_b$ be the line bundle $O_C(\mathfrak{b})$.  We then have an isomorphism $\phi_b:O_C \to L_b^{\otimes n} = O_C(\mathrm{div}_C(b))$  sending the global section $1$ to the global section $b^{-1}$ of $O_C(\mathrm{div}_C(b))$. This induces the isomorphism
$\rH^1(C,\mu_n) \to D(C)/(k(C)^*)^n$ in (\ref{eq:h13}). 

For $b\in D(C)$ and a choosen $n^{\mathrm{th}}$ root $b^{1/n}$ in $\overline{k(C)}$, the map $c_{b^{1/n}}$ satisfies the one-cocycle condition. Since the one-coboundaries are the maps $d_\zeta:\pi_1(C,\eta)\to\tilde{\mu}_n$, for $\zeta\in\tilde{\mu}_n$, with $d_\zeta(\sigma) = \sigma(\zeta)/\zeta$ for all $\sigma\in \pi_1(C,\eta)$, they naturally correspond to $b\in (k(C)^*)^n$.
\end{proof}

We now prove the formula in part (i) of Theorem \ref{thm:MSformula}
by adjusting the arguments of \cite[Thm. 2.4]{MS} in the following way.
We are concerned with the pairing
$$\rH^1(C,\mathbb{Z}/n) \times \rH^1(C,\mu_n) \to \rH^2(C,\mu_n)$$
rather than with the pairing
$$\rH^1(C,\mu_n) \times \rH^1(C,\mu_n) \to \rH^2(C,\mu_n^{\otimes 2})$$
that is the function field counterpart of the one considered in \cite[Thm. 2.4]{MS}.  As in  Theorem \ref{thm:MSformula}, suppose $\alpha \in \rH^1(C,\mathbb{Z}/n)$ and that $b \in D(C)$ has class $[b] \in \rH^1(C,\mu_n)$.  To connect the notation of \cite{MS} to our case, define $m_\sigma$ for $\sigma \in
\mathrm{Gal}(k(C_\alpha)/k(C))$ to be the smallest non-negative integer representing the residue class $\alpha'(\sigma) \in \mathbb{Z}/n$.  The computation of \cite[Lemma 2.2]{MS} holds in our case when we replace $(a,-ab)_S$ in the notation of \cite{MS} by 
\begin{equation}
\label{eq:replaceit}
\alpha \cup (-[b]) + (\alpha,\alpha)
\end{equation}
when $(\alpha,\alpha)$ is defined in the following way. The canonical exact sequence
$$0 \to \mathbb{Z}/n \to \mathbb{Z}/n^2 \to \mathbb{Z}/n \to 0$$
defines a class $\epsilon \in \rH^2(\mathbb{Z}/n,\mathbb{Z}/n)$. Let $\epsilon_\alpha \in \rH^2(C,\mathbb{Z}/n)$
be the pullback of this class via $\alpha:\pi_1(C,\eta) \to \mathbb{Z}/n$.   If $n$ is odd, let $(\alpha,\alpha) = 0$ be the trivial element of $\rH^2(C,\mu_n)$.  If $n$ is even, let
$(\alpha,\alpha) \in \rH^2(C,\mu_n)$ be the image of $\epsilon_\alpha$ under the composition of the map $\mathbb{Z}/n \to \mathbb{Z}/2$ given by multiplication by $n(n-1)/2$ followed by the unique map $\mathbb{Z}/2\to \mu_n$ sending $1$  mod $2$ to $-1 \in \mu_n$.  (We could have replaced $n(n-1)/2$ here simply by $n/2$, but we retain $n(n-1)/2$ to fit with \cite{MS}.)  The computations in \cite[Lemmas 2.2, 2.3]{MS} and \cite[Thm. 2.4]{MS} now carry over to show part (i) of Theorem \ref{thm:MSformula} in our case when we use   (\ref{eq:replaceit}) as a replacement for $(a,-ab)_S$.    

Part (ii) of Theorem \ref{thm:MSformula} now results from part (i) and the next lemma.
 
 \begin{lemma}
 \label{lem:dualer} 
 The cup product pairing
 \begin{equation}
 \label{eq:perfnexter}
 \rH^1(C,\mathbb{Z}/n) \times \rH^2(C,\mu_n) \to \rH^3(C,\mu_n) = \mathbb{Z}/n
 \end{equation}
 is a perfect duality between the groups $\rH^1(C,\mathbb{Z}/n)$ and $\rH^2(C,\mu_n)$.  This pairing
 agrees with the evaluation map
\begin{equation}
\label{eq:naturalnewer}
\mathrm{Hom}(\mathrm{Pic}(C),\mathbb{Z}/n) \times \frac{\mathrm{Pic}(C)}{n \cdot \mathrm{Pic}(C)}\to \mathbb{Z}/n\end{equation}
when we use the identifications in Lemma $\ref{lem:secondgroup}$.
 \end{lemma}

\begin{proof}The first statement is shown in \cite[Cor. II.3.3(b)]{MilneADT}.  The subtlety in verifying that (\ref{eq:perfnexter}) and (\ref{eq:naturalnewer}) agree is to check that the natural evaluation map in (\ref{eq:naturalnewer}) agrees with the canonical class isomorphism $\rH^3(C,\mu_n) = \mathbb{Z}/n$ in (\ref{eq:perfnexter}). 

We will use the following terminology.  Suppose $\chi \in \rH^1(C,\mathbb{Z}/n) = \mathrm{Hom}(\mathrm{Pic}(C),\mathbb{Z}/n) $ and that $x$ is a closed point of $C$ with class $[x] \in \mathrm{Pic}(C)/n\cdot \mathrm{Pic}(C))=  \rH^2(C,\mu_n)$.
We will say that the pairings agree for $\chi$ and $[x]$ if (\ref{eq:perfnexter}) and (\ref{eq:naturalnewer}) agree when we make the identifications in Lemma \ref{lem:dualer}.  

The first case is when $\chi \in \mathrm{Hom}(\pi_1(C,\eta),\mathbb{Z}/n)$ is trivial on $\pi_1(\overline{C},\eta)$ when $\overline{C} = \overline{k} \otimes_k C$.  Under the identification $\rH^1(C,\mathbb{Z}/n) = \mathrm{Hom}(\mathrm{Pic}(C),\mathbb{Z}/n) $, $\chi$ is then a multiple of the map
$\mathrm{deg}:\mathrm{Pic}(C) \to \mathbb{Z}/n$ induced by the degree.  We may thus reduce to the case in which $\chi = \mathrm{deg}$.  Class field theory then identifies $\chi$ with the element of $\rH^1(C,\mathbb{Z}/n) = \mathrm{Hom}(\pi_1(C,\eta),\mathbb{Z}/n)$ sending the coset $\Phi_{\overline{k}/k} \pi_1(\overline{C},\eta)$ to $1 \in \mathbb{Z}/n$ when $\Phi_{\overline{k}/k}$ is the arithmetic Frobenius of 
$G = \mathrm{Gal}(\overline{C}/C) = \mathrm{Gal}(\overline{k}/k)$.  
In the proof of \cite[Corollary V.2.3]{Milne} the Hochschild-Serre spectral sequence produces a diagram 
$$ \xymatrix @C.3pc {
0&\to&\rH^0(\overline{C},\mathbb{Z}/n)_{G}\ar[d]&\to&\rH^1(C,\mathbb{Z}/n)\ar[d]&\to&\rH^1(\overline{C},\mathbb{Z}/n)^{G}\ar[d]&\to&0\\
0&\to&\mathrm{Hom}(\rH^2(\overline{C},\mu_n)^{G},\mathbb{Z}/n)&\to& \mathrm{Hom}(\rH^2(C,\mu_n),\mathbb{Z}/n)&\to&\mathrm{Hom}(\rH^2(\overline{C},\mu_n)_{G},\mathbb{Z}/n) &\to&0
}$$
in which the vertical arrows are isomorphisms and come from duality pairings over $C$ and $\overline{C}$.  Here $\rH^0(\overline{C},\mathbb{Z}/n)_{G} = \mathbb{Z}/n$  is naturally identified with $\rH^1(G,\rH^0(\overline{C},\mathbb{Z}/n))$ by sending $1 \in \mathbb{Z}/n$ to
$\chi = \mathrm{deg}$.  This reduces the proof that the pairings agree for $\chi = \mathrm{deg}$ and the restriction of $[x]$ to showing that the value of
the pairing 
$$\rH^0(\overline{C},\mathbb{Z}/n)_{G} \times \rH^2(\overline{C},\mu_n)^{G} \to \mathbb{Z}/n$$
on $\mathrm{deg} = 1$ in $\rH^0(\overline{C},\mathbb{Z}/n)_{G} = \mathbb{Z}/n$ and the restriction of $ [x] $ to $\rH^2(\overline{C},\mu_n)^{G} = (\mathrm{Pic}(\overline{C})/n\cdot \mathrm{Pic}(\overline{C}))^{G} = \mathbb{Z}/n$ is $\mathrm{deg}(x)$. This is so because of the definition of the canonical isomorphism
$\rH^2(\overline{C},\mu_n) = \mathbb{Z}/n$ in the proof of \cite[Theorem V.2.1(a)]{Milne}. 
  Hence the pairings agree for $\chi$ trivial on $\pi_1(\overline{C},\eta)$ and all $[x]$.

Suppose now that $\chi$ is an arbitrary element of $\rH^1(C,\mathbb{Z}/n)$ and that $x$ is a closed point such that $[k(x):k]$ is prime to $n$. 
There will then be a character $\chi'$ that is trivial on $\pi_1(\overline{C},\eta)$ and has the same restriction as $\chi$ to every decomposition group associated to $x$ in $\pi_1(C,\eta)$.  Now $\chi - \chi'$
has trivial restriction to all such decomposition groups. Hence the value of the pairing 
in (\ref{eq:naturalnewer}) on $\chi$ and $[x]$ agrees with the value of this pairing on $\chi'$ and $[x]$.  We will show below the same is true for the pairing in (\ref{eq:perfnexter}). 
 Before proving this, we note that this will imply the pairings agree for $\chi$ and $[x]$ because we have shown they agree for $\chi'$ and arbitrary $[x]$.
 
 One way to prove the above claim about (\ref{eq:perfnexter}) is to use the spectral sequence
 $$\rH^p(C,R^q j_*\mu_{n,\xi}) \Rightarrow \rH^{p+q}(\xi,\mu_{n,\xi})$$ and the exact sequence
 \begin{equation}
\label{eq:okfour}
0 \to \mu_{n,C} \to j_* \mathbb{G}_{m,\xi} \to j_* \mathbb{G}_{m,\xi} \to R^1 j_* \mu_{n,\xi} \to 0
\end{equation}
associated to the inclusion $j: \xi =  \mathrm{Spec}(k(C)) \to C$ of the generic point $\xi$ of $C$ into $C$.  As in \cite[\S 5]{BCGKPT}, this gives an exact sequence
\begin{equation}
\label{eq:exactnice}
\rH^1(\xi,\mu_{n,\xi}) \to \rH^0(C,R^1 j_* \mu_{n,\xi}) \xrightarrow{\omega} \rH^2(C,\mu_{n,C}) \to 0
\end{equation} 
in which the transgression $\omega$  is a double boundary map associated to splitting (\ref{eq:okfour}) into two exact sequences.  In \cite[Cor. 5.3]{BCGKPT} it is shown how (\ref{eq:exactnice}) is naturally identified with the sequence
\begin{equation}
\label{eq:expl}
k(C)^*/(k(C)^*)^n \to \bigoplus_{x \in C^0} k(C)_x^*/T_x \xrightarrow{\omega} \rH^2(C,\mu_{n,C}) \to 0
\end{equation}
in which $C^0$ is the set of closed points of $C$ and $T_x$ for $x \in C^0$ is the subgroup of elements of
$k(C)_x^*$ having valuation divisible by $n$.  Since boundaries in spectral sequences are compatible with pairings, we see that (\ref{eq:perfnexter})
arises from (\ref{eq:exactnice}) together with the natural pairing
 $$\rH^1(C, \mathbb{Z}/n) \times \rH^0(C,R^1 j_* \mu_{n,\xi}) \to \rH^1(C,\mathbb{Z}/n \otimes R^1 j_* \mu_{n,\xi}) = \rH^1(C,R^1 j_* \mu_{n,\xi})$$
 and the transgression map
 \begin{equation}
 \label{eq:transg}
 \rH^1(C,R^1 j_* \mu_{n,\xi}) \to \rH^3(C,\mu_{n,C})
 \end{equation}
 associated to the  double boundary map in cohomology associated to (\ref{eq:okfour}).  As in the last paragraph of the proof of \cite[Lemma 5.2]{BCGKPT}, we have an isomorphism
 $$R^1 j_* \mu_{n,\xi} \cong \bigoplus_{x \in C^0} i_{x,*} i_x^* R^1 j_* \mu_{n,\xi} $$
where $i_x:x \to C$ is the closed immersion associated to $x \in C^0$. This reduces us to considering the pairing
\begin{equation}
\label{eq:Shappre}
\rH^1(C, \mathbb{Z}/n) \times \rH^0(C,i_{x,*} i_x^* R^1 j_* \mu_{n,\xi}) \to \rH^1(C,\mathbb{Z}/n \otimes i_{x,*} i_x^* R^1 j_* \mu_{n,\xi}) = \rH^1(C,i_{x,*} i_x^* R^1 j_* \mu_{n,\xi}).
\end{equation}
Pick a decomposition group $\pi_1(C,\eta)_x$ for $x$ in $\pi_1(C,\eta)$.  The sheaf
$i_x^* R^1 j_* \mu_{n,\xi}$ on $x$ is then associated to a module $M_x$ for $\pi_1(C,\eta)_x$.
The $\pi_1(C,\eta)$-module associated to $i_{x,*} i_x^* R^1 j_* \mu_{n,\xi}$ is  isomorphic the induction of $M_x$ from $\pi_1(C,\eta)_x$ to $\pi_1(C,\eta)$.   We want to show that the value of the pairing (\ref{eq:perfnexter}) on the pair $\chi$ and $[x]$ is the same as the value on $\chi'$ and $[x]$ because $\chi$ and $\chi'$ have the same restriction to every decomposition group associated to $x$.   This follows from Shapiro's Lemma applied to 
(\ref{eq:Shappre}).

We have shown that the pairings (\ref{eq:perfnexter}) and (\ref{eq:naturalnewer}) agree on every pair $\chi$ and $[x]$ for which $[k(x):k]$ has degree prime to $n$.  To finish the proof, we now show that the subgroup $T$ of \hbox{$\mathrm{Pic}(C)/n\cdot \mathrm{Pic}(C)$} generated by classes $[x]$ defined by $x$ with $[k(x):k]$ prime to $n$ is all of \hbox{$\mathrm{Pic}(C)/n\cdot \mathrm{Pic}(C)$}. 
Let $C' \to C$ be the finite abelian unramified cover of  $C$ associated to $T$.  Since there are divisors of degree $1$ on $C$, there are points with degrees that are prime to $n$.  Hence since $C$ was assumed to be geometrically irreducible with constant field $\mathbb{F}_q$ the same is true for $C'$.  By the definition of $T$,  every closed point $x \in C$ with $[k(x):k]$ prime to $n$ splits in $C'$.  Thus if $m$ is any integer prime to $n$, we have
$\#C'(\mathbb{F}_{q^m}) = d \cdot \# C(\mathbb{F}_{q^m})$ when $d$ is the covering degree of $C'$ over $C$.  Since $C$ and $C'$ are geometrically irreducible with the same constant field, the Weil conjectures force $d = 1$.  Hence $T = 
\mathrm{Pic}(C)/n\cdot \mathrm{Pic}(C)$ as required.
 \end{proof}

\section{Complements of classes coming from $k$}
\label{s:complements}

Throughout this section we will assume Hypothesis \ref{hyp:primepower}, i.e. $n=\ell^z$ for a prime $\ell$.  When
analyzing cup product pairings it will be useful to have a complement for the image of the inflation homomorphisms $\rH^1(k,\mathbb{Z}/n) \to \rH^1(C,\mathbb{Z}/n)$ and  $\rH^1(k,\mu_n) \to \rH^1(C,\mu_n)$. Let $k(O)$ be the residue field of a closed point $O$ of $C$, and let $d(O) = [k(O):k]$.

\begin{dfn}  
\label{def:normalizedone}
Suppose $O$ is a closed point of $C$ with $d(O)$ prime to $\ell \cdot (\ell -1)$.   A class  $\alpha\in \rH^1(C,\mathbb{Z}/n)$ will be said to be  normalized at $O$ if $O$ splits in the cyclic cover $C_\alpha \to C$ associated to $\alpha$. Let $\rH^1(C,\mathbb{Z}/n)_O$ be the subgroup of all such $\alpha$. Let $\pi_O$ be a uniformizing parameter in the local ring $O_{C,O}$.  An element $b\in D(C)$ will be said to be normalized at $O$ with respect to $\pi_O$ if the leading term in its Laurent expansion with respect to $\pi_O$ lies in $(k(O)^*)^n$.  A class in $\rH^1(C,\mu_n)$ will be said to be  normalized at $O$ with respect to $\pi_O$ if it has the form $[b]$ for an element $b$ of this kind. Let $\rH^1(C,\mu_n)_O$ be the subset of all such $[b]$.  
\end{dfn} 

\begin{lemma}
\label{lem:normalize}
Assume Hypothesis $\ref{hyp:primepower}$.
There is a closed point $O$ of $C$ with $d(O)$ prime to $\ell \cdot (\ell -1)$. 
\begin{enumerate}
\item[(i)] There is a direct sum decomposition 
$$\rH^1(C,\mathbb{Z}/n) = \rH^1(C,\mathbb{Z}/n)_O \oplus \rH^1(k,\mathbb{Z}/n)$$ 
where $\rH^1(k,\mathbb{Z}/n) \cong \mathbb{Z}/n$ and the restriction map  sends $\rH^1(C,\mathbb{Z}/n)_{O}$ isomorphically to $\rH^1(\overline{C},\mathbb{Z}/n)^{\mathrm{Gal}(\overline{k}/k)}$.
\item[(ii)]  The subgroup $\rH^1(C,\mu_n)_{O}$ depends on $O$ but  does not depend on the choice of uniformizer $\pi_O$ at $O$.  There is a direct sum decomposition 
$$\rH^1(C,\mu_n) = \rH^1(C,\mu_n)_{O} \oplus \rH^1(k,\mu_n)$$ 
where $\rH^1(k,\mu_n) \cong k^*/(k^*)^n$ and  the restriction map sends $\rH^1(C,\mu_n)_{O}$ isomorphically to $\rH^1(\overline{C},\mu_n)^{\mathrm{Gal}(\overline{k}/k)}$.
\end{enumerate}
\end{lemma}

\begin{proof}  
Since $C$ is geometrically irreducible over $k$,  there is a divisor of  $C$ that has degree $1$ by \cite[Cor. 5, \S VII.5]{Weil}, which implies there must a point $O$ for which $d(O)$ is prime to $\ell\cdot (\ell - 1)$.   

To prove (i), suppose $\alpha \in \rH^1(C,\mathbb{Z}/n)$.  Let $\pi_1(C,\eta)_O$ be a decomposition group associated to $O$ in $\pi_1(C,\eta)$.  Then $\pi_1(C,\eta)_O$ is procyclic, and there is a progenerator $\Phi$ of $\pi_1(C,\eta)_O$ that maps to $\Phi_{\overline{k}/k}^{d(O)}$ in $\mathrm{Gal}(\overline{k}/k)$.  Since $d(O)$ is prime to $\ell \cdot (\ell -1)$ and $n = \ell^z$, there is a character $\rho \in \rH^1(k,\mathbb{Z}/n) = \mathrm{Hom}(\mathrm{Gal}(\overline{k}/k), \mathbb{Z}/n)$ such that $\rho(\Phi_{\overline{k}/k}^{d(O)}) = \alpha(\Phi)$.  Denote by $\rho$ also the inflation of $\rho$ to $\rH^1(C,\mathbb{Z}/n)  = \mathrm{Hom}(\pi_1(C,\eta),\mathbb{Z}/n)$ via the natural surjection $\pi_1(C,\eta) \to \mathrm{Gal}(\overline{k}/k)$.  Then $\tilde{\alpha} = \alpha - \rho$ has the property that $\tilde{\alpha}(\Phi) = 0$, so $\tilde{\alpha} \in \rH^1(C,\mathbb{Z}/n)_O$ and $\alpha = \tilde{\alpha} + \rho$.  Thus $\rH^1(C,\mathbb{Z}/n)_O $ and $ \rH^1(k,\mathbb{Z}/n)$ together generate 
$\rH^1(C,\mathbb{Z}/n)$.  If $\alpha \in \rH^1(C,\mathbb{Z}/n)_O \cap \rH^1(k,\mathbb{Z}/n)$ then $C_\alpha \to C$ results from a constant field extension of $k$ of degree equal to the order of $\alpha$ and dividing $n = \ell^z$, and $O$ splits in this cover.  However $d(O) = [k(O):k]$ has degree prime to $n$, so this cover must be trivial and $\alpha$ is trivial.  This establishes the direct sum decomposition  $\rH^1(C,\mu_n) = \rH^1(C,\mu_n)_{O} \oplus \rH^1(k,\mu_n)$ in part (i).  The last statement in part (i) follows from the exact sequence (\ref{eq:h1seq}).

We now prove part (ii).
If $b \in D(C)$ then $\mathrm{ord}_{O}(b)$ is a multiple of $n$. Therefore if one replaces $\pi_O$ by another uniformizing parameter $\pi_O'$ at $O$, the leading terms in the Laurent expansions of $b$ with respect to $\pi_O$ and $\pi_O'$ differ by an element of $(k(O)^*)^n$.  Hence $\rH^1(C,\mu_n)_{O}$ does not depend on the choice of $\pi_O$.  
Since $d = d(O)$ is prime to $\ell\cdot (\ell -1)$, one sees that the ratio $\# k(O)^* / \# k^* = (q^d - 1)/(q-1) = 1 + q + \cdots + q^{d-1}$ is prime to $\ell$ by considering first the case in which $q \equiv 1$ mod $\ell$ and then the case in which $q \not \equiv 1$ mod $\ell$.  Hence the Sylow $\ell$-subgroups of $k^*$ and $k(O)^*$ are the same.  Therefore every $b \in D(C)$ is equal to $\tilde{b} \cdot s$ for some $\tilde{b}$ that is normalized at $O$ and some $s \in k^*$.  The image of $s$ in $\rH^1(k,\mu_n) = k^*/(k^*)^n = k(O)^*/(k(O)^*)^n$ is uniquely determined by the image $[b]$ of $b$ in $\rH^1(C,\mu_n) = D(C)/(k(C)^*)^n$.  It follows that we have a direct sum decomposition $\rH^1(C,\mu_n) = \rH^1(C,\mu_n)_{O} \oplus \rH^1(k,\mu_n)$.    The last statement in part (ii) follows from the spectral sequence (\ref{eq:specseq}) with $\mathcal{F} = \mu_n$.
\end{proof}

\begin{cor}
\label{lemcor:reduce} 
Assume Hypothesis $\ref{hyp:primepower}$, and let $O$ be a closed point of $C$ with $d(O)$ prime to $\ell\cdot (\ell -1)$. Every element $\alpha$ of $\rH^1(C,\mathbb{Z}/n)$ has a unique expression as a sum $\tilde{\alpha} + \rho$ with $\tilde{\alpha} \in \rH^1(C,\mathbb{Z}/n)_O$ and $\rho \in H^1(k,\mathbb{Z}/n)$. Every element $[b]$ of $\rH^1(C,\mu_n)$ has a unique expression as a product $[\tilde{b}] \cdot [s]$ with $[\tilde{b}] \in \rH^1(C,\mu_n)_O$ normalized at $O$ and $s \in k^*$, so $[s] \in H^1(k,\mu_n)$.    Then 
\begin{equation}
\label{eq:formula}
\alpha \cup [b]  = \tilde{\alpha} \cup [\tilde{b}] + \rho \cup [\tilde{b}] + \tilde{\alpha} \cup [s]  
\end{equation}
\end{cor}

\begin{proof}  
This is clear from the fact that cup products are bilinear and anti-commutative, and $\rho \cup [s] = 0 $ since $\rho \cup [s]$ is the inflation of a class in $\rH^2(k,\mu_n) = 0$ to $\rH^2(C,\mu_n)$.
\end{proof}

\begin{rem} 
Corollary $\ref{lemcor:reduce}$ reduces the computation of cup products of elements of $\rH^1(C,\mathbb{Z}/n)$ and $\rH^1(C,\mu_n)$ to two cases:  (i) both arguments are normalized classes with respect to some choice of closed point $O$ with $d(O)$ prime to $\ell\cdot (\ell -1)$, or (ii) one argument is normalized and the other argument is in $\rH^1(k,\mu_n)$ or $\rH^1(k,\mathbb{Z}/n)$.  
\end{rem} 

We will need the following characterizations of $\rH^1(C,\mathbb{Z}/n)_O$ and $\rH^1(C,\mu_n)_O$.

\begin{lemma}
\label{lem:normalizedcharacter}
Let $\overline{k(C)}$ be a separable closure of $k(C)$ containing $\overline{k}$, and let $M(C)$ be the maximal everywhere unramified extension of $k(C)$ inside $\overline{k(C)}$. Then $\pi_1(C,\eta)=\mathrm{Gal}(M(C)/k(C))$. Let $O$ be a closed point of $C$ with $d(O)$ prime to $\ell\cdot (\ell -1)$, let $\tilde{O}$ be a place in $M(C)$  over $O$, and let $\pi_1(C,\eta)_{\tilde{O}}$ be the decomposition group of the place $\tilde{O}$  in $\pi_1(C,\eta)$. Then $\pi_1(C,\eta)_{\tilde{O}}$ is procyclic, and there is a progenerator $\Phi$ of $\pi_1(C,\eta)_{\tilde{O}}$ that maps to $\Phi_{\overline{k}/k}^{d(O)}$ in $\mathrm{Gal}(\overline{k}/k)$.  
\begin{enumerate}
\item[(i)] Let $\alpha\in\rH^1(C,\mathbb{Z}/n) = \mathrm{Hom}(\pi_1(C,\eta),\mathbb{Z}/n)$. Then $\alpha\in\rH^1(C,\mathbb{Z}/n)_O$ if and only if $\alpha(\Phi)=0$. We have a direct sum decomposition 
$$\frac{\mathrm{Pic}(C)}{n\cdot \mathrm{Pic}(C)} = \langle [O]\rangle \oplus \frac{\mathrm{Pic}^0(C)}{n \cdot \mathrm{Pic}^0(C)}$$
where $[O]$ is the image of the class in $\mathrm{Pic}(C)$ of the divisor of degree $d(O)$ defined by $O$. Then $\langle [O]\rangle\cong\mathbb{Z}/n$, and we have that $\alpha\in\rH^1(C,\mathbb{Z}/n) = \mathrm{Hom}(\mathrm{Pic}(C),\mathbb{Z}/n)$ is normalized at $O$ if and only if $\alpha([O]) = 0$.
\item[(ii)] Suppose $b\in D(C)$. Then $b$ is normalized at $O$ if and only if there exists an element $b^{1/n}$ in the completion $k(C)_O$ of the function field $k(C)$ at $O$ such that $(b^{1/n})^n=b$. Let $L$ be the extension of $k(C)$ generated  by all such $b^{1/n}$ as $b$ ranges over all elements of $D(C)$ that are normalized at $O$.  Then $L \subset M(C)$, and there is a place $O'$ over $O$ with $L_{O'} = k(C)_O$.  Let  $c_{b^{1/n}}\in C^1(\pi_1(C,\eta),\tilde{\mu}_n)$ be the one-cocycle defined by $c_{b^{1/n}}(\sigma)= \sigma(b^{1/n})/b^{1/n}$ for $\sigma\in\pi_1(C,\eta)$.  Then $c_{b^{1/n}}$ represents $[b] \in \rH^1(C,\mu_n)_O$, and $c_{b^{1/n}}$ is trivial on the decomposition group $\pi_1(C,\eta)_{\tilde{O}}$ of a place $\tilde{O}$ of $M(C)$ over $O'$. In particular, $c_{b^{1/n}}(\Phi)=c_{b^{1/n}}(\Phi^{-1})=1$.  
\end{enumerate}
\end{lemma}

\begin{proof}
The first paragraph of Lemma \ref{lem:normalizedcharacter} is obvious.

To prove part (i), let $\pi_\alpha:C_\alpha \to C$ be the \'etale morphism of smooth projective curves over $k$ associated to $\alpha$. Then $O$ splits in the cyclic cover $\pi_\alpha$ if and only if $\alpha$ is trivial on the decomposition group $\pi_1(C,\eta)_{\tilde{O}}$ of any place $\tilde{O}$ in $M(C)$ over $O$. This implies the first statement of part (i) and the direct sum decomposition of $\mathrm{Pic}(C)/n\cdot \mathrm{Pic}(C)$. Since $d(O)$ is prime to $\ell$, $\langle [O]\rangle\cong\mathbb{Z}/n$, and we obtain the last statement of part (i).

To prove part (ii), let $b\in D(C)$ and choose a uniformizing parameter $\pi_O$ in the local ring $O_{C,O}$.  Then the leading term in the Laurent series expansion of $b$ with respect to $\pi_O$ lies in $(k(O)^*)^n$ if and only if there exists an $n^{\mathrm{th}}$ root $b^{1/n}$ of $b$ in the completion $k(C)_O$. The field $L$ described in part (ii) is then contained in $k(C)_O$, so there is a place $O'$ over $O$ in $L$ with $L_{O'} = k(C)_O$.  Since $b \in D(C)$ has valuation divisible by $n$ at all places of $k(C)$ and $n$ is prime to the characteristic of $k(C)$, $L$ is contained in $M(C)$.  When $\tilde{O}$ is a place of $M(C)$ over $O'$, the decomposition group $\pi_1(C,\eta)_{\tilde{O}}$ fixes $k(C)_{O} = L_{O'} \supset L$, so $c_{b^{1/n}}$ is trivial on $\pi_1(C,\eta)_{\tilde{O}}$ if $b \in D(C)$ is normalized at $O$. 
\end{proof}

We will need the following result later to reduce to the case in which $O$ has residue field $k$. 

\begin{thm}
\label{thm:basechange}
Under the assumptions of Corollary $\ref{lemcor:reduce}$, let $k' = k(O)$ be the residue field of $O$, so $[k':k]=d(O)$ is relatively prime to $\ell\cdot (\ell-1)$.
Let $\pi: C' = k' \otimes_k C \to C$ be the second projection.  The direct image homomorphism
\begin{equation}
\label{eq:directimg}
\pi_*:\rH^2(C',\mu_n) = \mathrm{Pic}(C') \otimes_{\mathbb{Z}} \mathbb{Z}/n \to \rH^2(C,\mu_n) = \mathrm{Pic}(C) \otimes_{\mathbb{Z}} \mathbb{Z}/n
\end{equation}
is induced by the norm $\mathrm{Norm}_{C'/C}:\mathrm{Pic}(C') \to \mathrm{Pic}(C)$. Let $O'$ be a point of $C'$ over $O$. Then $O'$ is a point of $C'(k')$ and $\mathrm{Norm}_{C'/C}(O') = O$.     Let  $\alpha' \in \rH^1(C',\mathbb{Z}/n)$ and $ [b]' \in \rH^1(C',\mu_n)$ be the pullbacks of $\alpha  \in \rH^1(C,\mathbb{Z}/n)$ and
$ [b] \in \rH^1(C,\mu_n)$.  One has
\begin{equation}
\label{eq:normit}
\alpha \cup [b] = \frac{1}{d(O)} (\mathrm{Norm}_{C'/C} \otimes \mathrm{Id}) (\alpha' \cup [b]').
\end{equation}
where $\mathrm{Id}$ is the identity map on $\mathbb{Z}/n$. If $\alpha \in \rH^1(C,\mathbb{Z}/n)_{O}$ and $[b] \in \rH^1(C,\mu_n)_{O}$ then $\alpha' \in 
\rH^1(C',\mathbb{Z}/n)_{O'}$ and $ [b]' \in \rH^1(C',\mu_n)_{O'}$.
\end{thm}

\begin{proof}
The  claim about (\ref{eq:directimg}) follows from the compatibility of the Kummer sequences of $C'$ and $C$ with respect to $\pi_*$.  The point $O$ splits to $C'$ so $\mathrm{Norm}_{C'/C}(O') = O$.  The composition $\pi_* \circ \pi^*$ on every cohomology group $\rH^i(C,\mathbb{Z}/n)$ and $\rH^i(C,\mu_n)$ is multiplication by the degree $d(O)$ of $C'$ over $C$.  By the compatibility of cup products with pullbacks this gives 
$$d(O) \cdot (\alpha \cup [b] ) = (\pi_* \circ \pi^*) (\alpha \cup [b] )  = \pi_* (\alpha' \cup [b']) = (\mathrm{Norm}_{C'/C} \otimes \mathrm{Id}) (\alpha' \cup [b]')$$
which shows (\ref{eq:normit}) since $d(O)$ is prime to $\ell$.  The fact that $\alpha' \in \rH^1(C',\mathbb{Z}/n)_{O'}$ if $\alpha \in \rH^1(C,\mathbb{Z}/n)_{O}$ follows from the fact that $O'$ and $O$ have the same residue field.  The fact that 
$[b]' \in \rH^1(C',\mu_n)_{O'}$ if $[b] \in \rH^1(C,\mu_n)_{O}$ follows from comparing Laurent expansions at $O$ and at $O'$.
\end{proof}

\section{The arithmetic of covers}
\label{s:arithcovers}
\setcounter{equation}{0}

In this section we will prove various results necessary for analyzing the formulas in  Theorem \ref{thm:MSformula}. We will assume Hypothesis \ref{hyp:primepower}, i.e. $n=\ell^z$.
Recall that $\Phi_{\overline{k}/k}$ is the arithmetic Frobenius of $\overline{k}$ over $k$.  As before, we fix a separable closure $\overline{k(C)}$ of $k(C)$ containing a fixed algebraic closure $\overline{k}$ of $k$.  

Define $\overline{k}(C)$ to be the compositum of $\overline{k}$ and  $k(C)$ in $\overline{k(C)}$.  Define $L(C)$ to be the maximal abelian  unramified extension of $\overline{k}(C)$ in $\overline{k(C)}$.  Then $\Phi_{\overline{k}/k}$ is a progenerator of $\mathrm{Gal}(\overline{k}(C)/k(C)) = \mathrm{Gal}(\overline{k}/k) \cong \hat{\mathbb{Z}}$ and $\mathrm{Gal}(L(C)/\overline{k}(C))$ is isomorphic to the adelic Tate module $T_{\mathbb{A}}(C) = \prod_{\ell'} T_{\ell'}(C)$, where $\ell'$ runs over all primes and $T_{\ell'}(C)$ is the $\ell'$-adic Tate module of $C$.  
The left conjugation action on $\mathrm{Gal}(L(C)/\overline{k}(C))$ of a lift $\tilde{\Phi}_{\overline{k}/k}$ of $\Phi_{\overline{k}/k}$ to $\mathrm{Gal}(L(C)/k(C))$  gives an automorphism $\Phi_{k,C}$ of $T_{\mathbb{A}}(C)$ independent of the choice of $\tilde{\Phi}_{\overline{k}/k}$.   The choice of $\tilde{\Phi}_{\overline{k}/k}$ gives an isomorphism
\begin{equation}
\label{eq:bigiso}
\mathrm{Gal}(L(C)/k(C)) = T_{\mathbb{A}}(C)  \rtimes \mathrm{Gal}(\overline{k}/k) .
\end{equation}
The Artin map then defines an injective homomorphism
\begin{equation}
\label{eq:artdef}
\mathrm{art}_C: \mathrm{Pic}(C) \to \mathrm{Gal}(L(C)/k(C))^{\rab} = T_{\mathbb{A}}(C)/(1 - \Phi_{k,C})T_{\mathbb{A}}(C) \times \mathrm{Gal}(\overline{k}/k).
\end{equation}
The image of this homomorphism consists of the elements which project to an integral power of $\Phi_{\overline{k}/k}$ in $\mathrm{Gal}(\overline{k}/k)$.
The restriction of the Artin map defines an isomorphism
\begin{equation}
\label{eq:artdef0}
\mathrm{art}^0_C:\mathrm{Pic}^0(C) \to   T_{\mathbb{A}}(C)/(1 - \Phi_{k,C})T_{\mathbb{A}}(C).
\end{equation}
Here $\mathrm{Pic}^0(C)$ is a finite group, and on the Sylow $\ell$-subgroup we get an isomorphism
\begin{equation}
\label{eq:artdef0ell}
\mathrm{art}^0_C:\mathrm{Pic}^0(C)[\ell^\infty] \to   T_{\ell}(C)/(1 - \Phi_{k,C})T_{\ell}(C)
\end{equation}
where $T_\ell(C)$ is isomorphic to $(\mathbb{Z}_\ell)^{2g(C)}$ when $g(C)$ is the genus of $C$ over $k$ since $\ell$ is prime to $q = \#k$.

Let $C'$ be a smooth projective curve such that $k(C')$ is a cyclic everywhere unramified extension of $k(C)$ of degree $d\big|n$. Let $k'$ be the constant field of $C'$, so $[k':k]\big|d$ and $k \subset k' \subset \overline{k}$.    We have $\overline{k}(C) \subset \overline{k}(C')$ and $L(C) \subset L(C') $.   We get a commutative diagram 
\begin{equation}
\label{eq:origin}
 \xymatrix @C.5pc {
\mathrm{Gal}(L(C')/k'(C'))\ar[d]&=&T_{\mathbb{A}}(C') \rtimes \mathrm{Gal}(\overline{k}/k')\ar[d]^{\overline{\pi}_{*}\  \rtimes \ \iota}\\
\mathrm{Gal}(L(C)/k(C))&=& T_{\mathbb{A}}(C) \rtimes  \mathrm{Gal}(\overline{k}/k)
}
\end{equation}
in which the left vertical homomorphism results from restricting automorphisms of $L(C')$ to $L(C)$,  $\overline{\pi}_{*}$ is induced by the morphism $\overline{\pi}:{\overline{k}}\otimes_{k'} C'  = \overline{C'} \to {\overline{k}}\otimes_{k} C = \overline{C}$ associated to $\overline{k}(C) \subset \overline{k}(C') $ and $\iota: \mathrm{Gal}(\overline{k}/k') \subset \mathrm{Gal}(\overline{k}/k)$ is the natural inclusion. By choosing compatible lifts of $\Phi_{\overline{k}/k'}$ and $\Phi_{\overline{k}/k}$, we can assume $\iota(\Phi_{\overline{k}/k'}) = \Phi_{\overline{k}/k}^{[k':k]}$.  For $y \in T_\mathbb{A}(C')$ we have
$$\overline{\pi}_*(\Phi_{k',C'}(y)) = \Phi_{k,C}^{[k':k]}(\overline{\pi}_* y).$$

Taking maximal abelian quotients of the groups in this diagram gives a commutative diagram
\begin{equation}
\label{eq:oomphah}
 \xymatrix @C.25pc {
\mathrm{Gal}(L(C')/k'(C'))^{\rab}  \ar[d]_{\mathrm{\mathcal{N}}} &=& \displaystyle \frac{T_{\mathbb{A}}(C')}{(1 - \Phi_{k',C'}) T_\mathbb{A}(C')} \times \mathrm{Gal}(\overline{k}/k') &&& \mathrm{Pic}(C') \ar[d]^{\mathrm{Norm}_{C'/C}} \ar[lll]_(.25){\mathrm{art}_{C'}}\\
\mathrm{Gal}(L(C)/k(C))^{\rab} &=& \displaystyle \frac{T_\mathbb{A}(C)}{(1 - \Phi_{k,C}) T_\mathbb{A}(C)} \times  \mathrm{Gal}(\overline{k}/k) &&&  \mathrm{Pic}(C)  \ar[lll]_(.25){\mathrm{art}_{C}}
}
\end{equation}
where $\mathcal{N}$ is induced by the previously described homomorphism $\overline{\pi}_* \ \rtimes\  \iota$ of diagram (\ref{eq:origin}).  

We must now analyze transfer maps.  Recall that if $H$ is a finite index subgroup of a group $G$, the transfer homomorphism $\mathrm{Ver}:G^{\rab} \to \rH^{\rab}$ is defined in the following way.  Let $\{x_i\}_i$ be a set of right coset representatives for $H$ in $G$.  For $\gamma \in G$, write $x_i \gamma  = h_i(\gamma) x_j$ for some index $j$ depending on $i$ and $\gamma$ and some $h_i(\gamma) \in H$.  Then $\mathrm{Ver}$ sends the image $[\gamma]$ of $\gamma$ in $G^{\rab}$ to the image in $\rH^{\rab}$ of $\prod_i h_i(\gamma)$.  We need to analyze this map when $G = \mathrm{Gal}(L(C')/k(C))$ and $H = \mathrm{Gal}(L(C')/k'(C'))$. Note that $G^{\rab} =  \mathrm{Gal}(L(C)/k(C))^{\rab}$ in this case.

\begin{dfn}
\label{def:firstTransfer}
Suppose $\overline{k}\otimes_{k'} C' = \overline{k} \otimes_k C$, so that $C' = k' \otimes_k C$ is a constant field cover of $C$.  We then have  $T_{\mathbb{A}}(C') = T_{\mathbb{A}}(C)$.  With this identification, define $V:T_{\mathbb{A}}(C) \to T_{\mathbb{A}}(C') = T_{\mathbb{A}}(C)$ by 
\begin{equation}
\label{eq:Vdef}
V = \sum_{i = 0}^{d - 1} \Phi_{k,C}^{i} 
\end{equation}
when we write the group law of $T_{\mathbb{A}}(C)$ additively, where $d = [k(C'):k(C)] = [k':k]$.  
\end{dfn}

\begin{lemma}
\label{lem:upfront}
Suppose the hypotheses of Definition $\ref{def:firstTransfer}$. There is a commutative diagram 
\begin{equation}
\label{eq:oomph}
 \xymatrix @C.25pc {
T_\mathbb{A}(C) \ar[d]_{V} \ar[rrr] &&& \mathrm{Gal}(L(C)/k(C))^{\rab}  \ar[d]_{\mathrm{Ver}} &=& \displaystyle \frac{T_{\mathbb{A}}(C)}{(1 - \Phi_{k,C}) T_{\mathbb{A}}(C)} \times \mathrm{Gal}(\overline{k}/k) &&& \mathrm{Pic}(C)\ar[d]^{\pi^*} \ar[lll]_(.25){\mathrm{art}_C}\\
T_{\mathbb{A}}(C') \ar[rrr] &&&\mathrm{Gal}(L(C')/k'(C'))^{\rab} &=& \displaystyle \frac{T_{\mathbb{A}}(C')}{(1 - \Phi_{k',C'}) T_{\mathbb{A}}(C')} \times  \mathrm{Gal}(\overline{k}/k')  &&&  \mathrm{Pic}(C') \ar[lll]_(.225){\mathrm{art}_{C'}}
}
\end{equation}
\end{lemma} 

\begin{proof}  
This is just a matter of unwinding the definition of the transfer homomorphism when $G = \mathrm{Gal}(L(C')/k(C))$ and $H = \mathrm{Gal}(L(C')/k'(C'))$.  Since $C' = k' \otimes_k C$, we can choose the set of coset representatives for $H$ in $G$ to be $\{\tilde{\Phi}_{\overline{k}/k}^{i}\}_{i = 0}^{d -1}$.  
\end{proof}  

\begin{cor} 
\label{cor:whatever} 
Suppose the hypotheses of Definition $\ref{def:firstTransfer}$. There is a commutative diagram
\begin{equation}
\label{eq:moreso}
 \xymatrix {
T_\ell(C) \;\ar[r] \ar[d]_{V} &\;  \displaystyle \frac{T_\ell(C)}{(1 - \Phi_{k,C}) T_\ell(C)}\; & \;\mathrm{Pic}^0(C)[\ell^\infty] \ar[d]^{\pi^* = \mathrm{Ver}} \ar[l]_(.4){\mathrm{art}_C^0}^(.4){\cong}\\
T_\ell(C')\; \ar[r] & \; \displaystyle \frac{T_\ell(C')}{(1 - \Phi_{k',C'}) T_\ell(C')} \;& \;\mathrm{Pic}^0(C')[\ell^\infty]\ar[l]_(.4){\mathrm{art}_{C'}^0}^(.4){\cong}
}
\end{equation}
where $\mathrm{art}_C^0$ $($resp. $\mathrm{art}_{C'}^0$$)$ is the restriction of the Artin map $\mathrm{art}_C$ $($resp. $\mathrm{art}_{C'}$$)$ as in $(\ref{eq:artdef0ell})$.
We have $T_\ell(C') = T_\ell(C)$.  With this identification, 
 \begin{equation}
 \label{eq:clearly}
 V (1 - \Phi_{k,C}) T_\ell(C) =   (1 - \Phi_{k',C'} )T_\ell(C)
 \end{equation} and both $V$ and $\pi^*$ are injective.  
\end{cor}

\begin{proof}  The equality (\ref{eq:clearly}) follows from 
$$V (1 - \Phi_{k,C}) T_\ell(C) = (\sum_{i = 0}^{d - 1} \Phi_{k,C}^{i} ) (1 - \Phi_{k,C}) T_\ell(C) =  (1 - \Phi_{k,C}^d )T_\ell(C) =  (1 - \Phi_{k',C'} )T_\ell(C).$$
Since $T_\ell(C)$ is isomorphic to $(\mathbb{Z}_\ell)^{2g(C)}$ and $(1 - \Phi_{k',C'}) T_\ell(C)$ has finite index in $T_\ell(C)$, this equality also implies that $V$ and $\pi^*$ are injective.
\end{proof}

\section{Arithmetic Frobenius and Legendre derivatives}
\label{s:restrictedFrobenius}
\setcounter{equation}{0}

In this section, we will consider restrictions of the cup product maps (\ref{eq:cupprodbi}) and (\ref{eq:cupprodtri})  that are connected to the derivative of the arithmetic Frobenius.  We will use the notation of \S \ref{s:arithcovers}.  In particular, we assume Hypothesis \ref{hyp:primepower}, i.e. $n = \ell^z$.
\begin{prop} 
\label{prop:Aut} 
There is a unique automorphism $A$ of $\mathbb{Q}_\ell \otimes_{\mathbb{Z}_\ell} T_\ell(C)$ such that $\Phi_{k,C} = 1 + nA$.  Multiplication by $A^{-1}$ defines a homomorphism
\begin{equation}
\label{eq:dPhi}
d\mathcal{L}: \mathrm{Pic}^0(C)[n] = \frac{T_\ell(C) \cap A T_\ell(C)}{n A T_\ell(C)} \to \frac{T_\ell(C)}{n T_\ell(C) + n A T_\ell(C)} = \frac{\mathrm{Pic}^0(C)}{n\cdot\mathrm{Pic}^0(C) }
\end{equation}
which we will call the Legendre derivative of Frobenius.
\end{prop}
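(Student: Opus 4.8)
The plan is to prove the three assertions in turn: existence and uniqueness of $A$, the two identifications of subquotients displayed in (\ref{eq:dPhi}), and the fact that multiplication by $A^{-1}$ descends to a well-defined homomorphism $d\mathcal{L}$ between them. For the first, put $V = \mathbb{Q}_\ell\otimes_{\mathbb{Z}_\ell}T_\ell(C)$. Since $\ell$ is a unit in $\mathbb{Q}_\ell$, the equation $\Phi_{k,C} = 1+\ell A$ has the unique solution $A = \ell^{-1}(\Phi_{k,C}-1)$ in $\mathrm{End}_{\mathbb{Q}_\ell}(V)$, so existence and uniqueness are immediate and the only content is that $A$ is invertible, i.e. $\det(\Phi_{k,C}-1)\neq 0$. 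I would obtain this from (\ref{eq:artdef0}): the Artin isomorphism $\mathrm{art}^0_C$ identifies $\mathrm{Pic}^0(C)[\ell^\infty]$ with $T_\ell(C)/(1-\Phi_{k,C})T_\ell(C)$, and since $\mathrm{Pic}^0(C)$ is finite (being the group of $k$-points of the Jacobian of $C$), so is this cokernel; an endomorphism of the free $\mathbb{Z}_\ell$-module $T_\ell(C)\cong\mathbb{Z}_\ell^{2g}$ with finite cokernel is injective and has nonzero determinant. (Equivalently, $1$ is not an eigenvalue of Frobenius, as also follows from the Weil bounds.) Note that $\ell A = \Phi_{k,C}-1$ preserves $T_\ell(C)$, whereas $A$ itself in general does not.

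Next I would make the identifications in (\ref{eq:dPhi}) explicit. As $(1-\Phi_{k,C})T_\ell(C) = \ell A\,T_\ell(C)$ --- this lies in $T_\ell(C)$, and differs from $(\Phi_{k,C}-1)T_\ell(C)$ only by the unit $-1$ --- equation (\ref{eq:artdef0}) becomes $\mathrm{Pic}^0(C)[\ell^\infty] = T_\ell(C)/\ell A\,T_\ell(C)$. Its $\ell$-torsion subgroup, namely $\mathrm{Pic}^0(C)[\ell]$, is the image of $\{x\in T_\ell(C):\ \ell x\in\ell A\,T_\ell(C)\}$; since $T_\ell(C)$ is torsion free and $A$ is invertible on $V$, the condition $\ell x\in\ell A\,T_\ell(C)$ is equivalent to $x\in A\,T_\ell(C)$, whence $\mathrm{Pic}^0(C)[\ell] = (T_\ell(C)\cap A\,T_\ell(C))/\ell A\,T_\ell(C)$. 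For the target, $\mathrm{Pic}^0(C)/\ell\cdot\mathrm{Pic}^0(C)$ sees only the $\ell$-primary part of the finite group $\mathrm{Pic}^0(C)$, so it equals $\mathrm{Pic}^0(C)[\ell^\infty]/\ell\cdot\mathrm{Pic}^0(C)[\ell^\infty] = T_\ell(C)/(\ell T_\ell(C)+\ell A\,T_\ell(C))$. These are the two displayed equalities.

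Finally, if $x\in T_\ell(C)\cap A\,T_\ell(C)$ then $A^{-1}x\in T_\ell(C)$, so $x\mapsto A^{-1}x$ is a $\mathbb{Z}_\ell$-linear map from $T_\ell(C)\cap A\,T_\ell(C)$ to $T_\ell(C)$; it carries $\ell A\,T_\ell(C)$ into $\ell T_\ell(C)\subseteq \ell T_\ell(C)+\ell A\,T_\ell(C)$, hence passes to the quotients and induces the homomorphism $d\mathcal{L}$ of (\ref{eq:dPhi}). The whole argument is essentially bookkeeping with lattices inside $V$; the one substantive ingredient is the invertibility of $A$, and the only step that demands care is keeping track of which of $T_\ell(C)$, $A\,T_\ell(C)$, $\ell T_\ell(C)$ and $\ell A\,T_\ell(C)$ contains which, precisely because $A$ is an automorphism of $V$ but not of the lattice $T_\ell(C)$.
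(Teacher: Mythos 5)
Your proposal is correct and follows the same route as the paper: invertibility of $A$ on $\mathbb{Q}_\ell\otimes_{\mathbb{Z}_\ell}T_\ell(C)$ is deduced from the finiteness of $T_\ell(C)/(1-\Phi_{k,C})T_\ell(C)\cong \mathrm{Pic}^0(C)[\ell^\infty]$ via the Artin isomorphism, and the two subquotient identifications are then read off from $\mathrm{Pic}^0(C)[\ell^\infty]=T_\ell(C)/\ell A\,T_\ell(C)$. You merely make explicit the lattice bookkeeping (the $\ell$-torsion computation and the check that $A^{-1}$ carries $\ell A\,T_\ell(C)$ into $\ell T_\ell(C)$) that the paper dismisses with ``from which the proposition is clear.''
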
  

\begin{proof}    
By (\ref{eq:artdef0ell}) we have an isomorphism
$$\mathrm{art}_C^0:\quad \mathrm{Pic}^0(C)[\ell^\infty] \to  \frac{T_\ell(C)}{(1 - \Phi_{k,C}) T_\ell(C)}  = \frac{T_\ell(C)}{n AT_\ell(C)}.$$
Since this group is finite and  $T_\ell(C)$ is a free $\mathbb{Z}_\ell$-submodule of $\mathbb{Q}_\ell \otimes_{\mathbb{Z}_\ell} T_\ell(C)$ of rank $2 g(C)$, this implies $A$ is an automorphism of $\mathbb{Q}_\ell \otimes_{\mathbb{Z}_\ell} T_\ell(C)$.  We have  isomorphisms
$$\frac{\mathrm{Pic}^0(C)}{n\cdot\mathrm{Pic}^0(C)} = \frac{\mathrm{Pic}^0(C)[\ell^\infty] }{ n\cdot \mathrm{Pic}^0(C)[\ell^\infty]}  = \frac{T_\ell(C)}{ nT_\ell(C) + n A T_\ell(C)} $$
and
$$\mathrm{Pic}^0(C)[n] = \frac{T_\ell(C) \cap A T_\ell(C)}{n A T_\ell(C)}$$
from which the proposition is clear.
\end{proof}

\begin{rem}
\label{rem:WTF}  
The reason for the terminology is that in the classical theory of convexity, the derivative of the Legendre transform of a differentiable function is the inverse of the derivative of the function;   see \cite{Simon}. Here $\frac{1}{n}(\Phi_{k,C} - 1)$ is a formal derivative of the arithmetic Frobenius.  It would be interesting to develop a counterpart of this theory over finite fields. 

The groups $\mathrm{Pic}^0(C)[n]$ and $\mathrm{Pic}^0(C)/n\cdot\mathrm{Pic}^0(C)$ have the same order, but $d\mathcal{L}$ is not in general an isomorphism.  
For example, suppose $C$ is an elliptic curve with affine equation $y^2 = x^3 - 3$ over $k = \mathbb{Z}/7$.   Then $C$ has an automorphism $\zeta$ of order $3$ over $k$ fixing $y$ and sending $x $ to $2x$, so  $C$ has complex multiplication by $\mathbb{Z}[\zeta]$.  One finds $\# C(k) = 3$.  This implies that if $\ell = n = 3$, $T_\ell(C)$ must be a free rank one $\mathbb{Z}_\ell[\zeta]$-module, and $\Phi_{k,C}  - 1$ acts as multiplication by a uniformizing parameter in $\mathbb{Z}_\ell[\zeta]$.   Since $\mathbb{Z}_\ell[\zeta]$ is quadratically ramified over $\mathbb{Z}_\ell$, it follows that $A^{-1}$ acts by  multiplication by a uniformizer in $\mathbb{Z}_\ell[\zeta]$.  This forces $d\mathcal{L}$ to be the zero homomorphism. On the other hand, if $C$ is any curve such that $\mathrm{Pic}^0(C)[n]$ is isomorphic to $(\mathbb{Z}/n)^{2 g(C)}$, then $A$ defines an endomorphism of $T_\ell(C)$, and $d\mathcal{L}$ is an isomorphism. 
\end{rem}

\begin{thm}
\label{thm:legendre1}  
Under Hypothesis $\ref{hyp:primepower}$, the restrictions of $(\ref{eq:cupprodbi})$ $($resp. $(\ref{eq:cupprodtri})$$)$ in which the first $($resp. second\/$)$ argument lies in $\rH^1(k,\mathbb{Z}/n)$ can be computed in the following way. Suppose $\tau$, $\alpha$ and $b$ are as in parts $(i)$ and $(ii)$ of Theorem $\ref{thm:MSformula}$, and $\alpha \in \rH^1(k,\mathbb{Z}/n) = \mathrm{Hom}(\mathrm{Gal}(\overline{k}/k),\mathbb{Z}/n)$.  Let $[\mathfrak{b}]$ be the class in $\mathrm{Pic}^0(C)[n]$ of the divisor
$\mathfrak{b} = \mathrm{div}_C(b)/n$. Let $\Phi_{\overline{k}/k} \in \mathrm{Gal}(\overline{k}/k)$ be the Frobenius automorphism. One has
\begin{equation}
\label{eq:cuptwo1}
\alpha \cup [b] =  \alpha(\Phi_{\overline{k}/k}) \cdot d\mathcal{L} ([\mathfrak{b}])  \in \mathrm{Pic}(C) /n \cdot \mathrm{Pic}(C) = \rH^2(C,\mu_n).
\end{equation}
The cup product $\tau \cup \alpha \cup [b]$ depends only on the restriction of $\tau$ to $\mathrm{Pic}^0(C)$, and
\begin{equation}
\label{eq:formula1}
\tau \cup \alpha \cup [b] =\alpha(\Phi_{\overline{k}/k}) \cdot \ \tau(d\mathcal{L} ([\mathfrak{b}])) \in \mathbb{Z}/n = \rH^3(C,\mu_n).
\end{equation}
\end{thm}

\begin{rem}
\label{rem:geomversusarith}  
In  Theorem $\ref{thm:legendre1}$, it is important that  the automorphism $\Phi_{k,C}$ of $T_\ell(C)$ used to define $d\mathcal{L}$ is the one arising from the arithmetic Frobenius $\Phi_{\overline{k}/k} \otimes 1_{C/k}$ of $\overline{C} =\overline{k} \otimes_{k} C$. The geometric Frobenius endomorphism $1_{\overline{k}} \otimes F_{C/k}$ of  $\overline{C} = \overline{k} \otimes_k C$ over $\overline{k}$ is the identity map on the underlying topological space of $C$ and that is the $q^{\rth}$ power map on $O_C$. In particular, $1_{\overline{k}} \otimes F_{C/k}$ acts on $\overline{C}(\overline{k})$ by raising the coordinates of any point to the $q^{\rth}$ power (see \cite[p. 186 and p. 291-292]{Milne}). The action of $1_{\overline{k}}\otimes F_{C/k}$ on $T_\ell(C)$ is the inverse of the action of $\Phi_{k,C}$.  In particular, if one writes the action of $\Phi_{k,C}$ on $T_\ell(C)$ as $1 + nA$ as in Proposition $\ref{prop:Aut}$, then $1_{\overline{k}} \otimes F_{C/k}$ acts as $(1 + n A)^{-1}$.  
\end{rem}

\begin{proof}[Proof of Theorem $\ref{thm:legendre1}$]  
We can reduce to the case in which $\alpha$ is the generator of $\rH^1(k,\mathbb{Z}/n)$ such that $\alpha(\Phi_{\overline{k}/k}) = 1$. 
By parts (i) and (ii) of Theorem \ref{thm:MSformula}, formulas for the cup products in (\ref{eq:cuptwo1}) and (\ref{eq:formula1}) are obtained as follows.  Let $k'$ be the cyclic extension of $k$ of degree $n$.  Then $C' = k' \otimes_k C$ is a cyclic unramified cover of $C$ of degree $n$, and $\alpha$ defines an isomorphism from  $\mathrm{Gal}(C'/C) = \mathrm{Gal}(k'/k)$ to $\mathbb{Z}/n$ by sending  $\Phi_{k'/k}$ to $1$. 

We have defined $ \mathfrak{b} = \mathrm{div}_C(b)/n \in \mathrm{Div}(C)$.  There is an element $c \in k(C')$ such that $b = \mathrm{Norm}_{k(C')/k(C)}(c)$ since $k(C') = k'(C') = k' \otimes_k k(C)$ is a cyclic unramified (constant field) extension of $k(C)$. Let $\psi_k$ be the automorphism of $k(C') = k' \otimes_k k(C)$ which is the identity on $1 \otimes k(C)$ and which is the Frobenius automorphism  $\Phi_{k'/k}$ on  $k' \otimes 1 = k'$. Let $\pi:C' \to C$ be the morphism associated with the inclusion $k(C) \subset k(C')$.  There is a divisor $\mathfrak{c} \in \mathrm{Div}(C')$ such that 
\begin{equation}
\label{eq:right}
\mathrm{div}_{C'}(c) = \pi^* \mathfrak{b}+ (1 - \psi_k) \cdot \mathfrak{c}
\end{equation}
since $C' \to C$ is cyclic and unramified and the norm of the divisor $\mathrm{div}_{C'}(c) - \pi^* \mathfrak{b}$ is trivial. 

In the notation of Theorem \ref{thm:MSformula} we have $C_\alpha=C'$, $\sigma = \psi_k$, and $\alpha'(\sigma) = 1$ because $\alpha(\Phi_{\overline{k}/k}) = 1$.  We obtain that the cup product is given by 
\begin{equation}
\label{eq:cuptwo4}
\alpha \cup [b] =  [\mathrm{Norm}_{k(C')/k(C)}(\mathfrak{c})] + \frac{n}{2} [\mathfrak{b}]
\quad \mbox{in} \quad\mathrm{Pic}(C)/n \cdot \mathrm{Pic}(C) =  \rH^2(C,\mu_n) 
\end{equation}
where $[\mathfrak{d}]$ is the class in $\mathrm{Pic}(C)$ of a divisor $\mathfrak{d}$.  For $\tau \in \rH^1(C,\mathbb{Z}/n) = \mathrm{Hom}(\mathrm{Pic}(C),\mathbb{Z}/n)$, the triple product is given by 
\begin{equation}
\label{eq:triplecomp2}
\tau \cup \alpha \cup [b] = \tau \left ([\mathrm{Norm}_{k(C')/k(C)}(\mathfrak{c})] + \frac{n}{2} [\mathfrak{b}] \right ) \quad \mbox{in} \quad \mathbb{Z}/n = \rH^3(C,\mu_n).
\end{equation}

Since the class $[\mathfrak{b}]$ lies in $\mathrm{Pic}^0(C)[n]$,  (\ref{eq:right}) shows
\begin{equation}
\label{eq:pull} 
\pi^* [\mathfrak{b}] = (\psi_k - 1) [\mathfrak{c}] \quad \mathrm{in} \quad \mathrm{Pic}^0(C')[n] .
\end{equation}
We now use diagram  (\ref{eq:moreso})  of Corollary \ref{cor:whatever}.   Here, since $C' = k' \otimes_k C$ and $[k':k] = n$, $\Phi_{k,C}$ is the automorphism of the Tate module $T_\ell(C) = T_\ell(C')$ induced by $\psi_k = 1_{C} \otimes \Phi_{\overline{k}/k}$, and $\Phi_{k',C'} = \Phi_{k,C}^{n}$.   Write $\Phi_{k,C} = 1 + nA$ as in Proposition \ref{prop:Aut}.  The endomorphism $V:T_\ell(C) \to T_\ell(C)$ is
$$V = \sum_{i = 0}^{n -1} \Phi_{k,C}^i = \sum_{i = 0}^{n - 1} (1 + n A)^i.$$
From (\ref{eq:artdef0ell}) we have isomorphisms
$$\mathrm{art}_{C}^0:\mathrm{Pic}^0(C)[\ell^\infty] \xrightarrow{\cong} \frac{T_\ell(C)}{(1 - \Phi_{k,C})T_\ell(C)} =  \frac{T_\ell(C) }{ n AT_\ell(C)}$$
and
$$\mathrm{art}_C^0: \mathrm{Pic}^0(C)[n]  \xrightarrow{\cong} \frac{T_\ell(C) \cap  AT_\ell(C)}{nA T_\ell(C)}.$$
Hence $\mathrm{art}_C^0([\mathfrak{b}])  =  [AE]$  in $(T_\ell(C) \cap  AT_\ell(C))/nA T_\ell(C)$ for some $E \in T_\ell(C)$ such that $AE \in T_\ell(C)$.  Therefore (\ref{eq:moreso}) shows
$$\mathrm{art}_{C'}^0(\pi^* [\mathfrak{b}]) = [VAE] \quad \mathrm{in}\quad \frac{T_\ell(C)}{(1 - \Phi_{k',C'}) T_\ell(C)} = \frac{T_\ell(C)}{(1 - (1+ nA)^n)T_\ell(C)}.$$
Here
\begin{equation}
\label{eq:VAU}
VA = AV = A \sum_{i = 0}^{n - 1} (1 + n A)^i = A  \sum_{i = 0}^{n - 1} \sum_{j = 0}^i \left(\begin{array}{c} i \\ j \end{array}\right) (n A)^j  = n A  U
\end{equation}
where 
\begin{equation}
\label{eq:Udef}
U = 1 + A \sum_{i = 1}^{n - 1} \sum_{j = 1}^i \left(\begin{array}{c} i \\ j \end{array}\right) (n A)^{j-1}.
\end{equation}
The element $D = UE$ of $\mathbb{Q}_\ell \otimes_{\mathbb{Z}_\ell} T_\ell(C)$ lies in $T_\ell(C)$ since $E$ and $AE$ are both in $T_\ell(C)$. We have 
\begin{equation}
\label{eq:uggg}
\mathrm{art}_{C'}^0(\pi^* [\mathfrak{b}]) = [VAE]  = [n A UE] = [n A D] = (\Phi_{k,C} - 1) [D] 
\end{equation}
where on the right $[D]$ means the class of $D$ in $T_\ell(C)/(1 - \Phi_{k',C'}) T_\ell(C) = \mathrm{Pic}^0(C')[\ell^\infty]$.

The action of $\Phi_{k,C}$ on $T_\ell(C)$ gives the action of $\psi_k$ on $\mathrm{Pic}^0(C')[\ell^\infty]$. Equations (\ref{eq:pull}) and (\ref{eq:uggg}) now show 
\begin{equation}
\label{eq:fishy}
[\mathfrak{c}] - [D] \in \mathrm{Pic}^0(C')^G
\end{equation}
when $G = \mathrm{Gal}(C'/C)$ is the cyclic group of order $n$ generated by $\psi_k$. 

We have an exact sequence
\begin{equation}
\label{eq:longexact}
1 \to (k')^* \to k(C')^* \to \mathrm{Div}(C') \to \mathrm{Pic}(C') \to 1
\end{equation}
in which $k'$ is the field of constants of $k(C')$ and the map $k(C')^* \to \mathrm{Div}(C')$ is induced by taking divisors.  Splitting this into two short exact sequences and using that $\rH^1(G,k(C')^*) = 0$ by Hilbert's theorem 90, we get exact sequences
\begin{equation}
\label{eq:firstseq}
\mathrm{Div}(C')^G \to \mathrm{Pic}(C')^G \to \rH^1(G,k(C')^*/(k')^*) \to 0 = \rH^1(G,\mathrm{Div}(C'))
\end{equation}
and
\begin{equation}
\label{eq:next}
0 \to \rH^1(G,k(C')^*/(k')^*) \to \rH^2(G,(k')^*) \to \rH^2(G,k(C')^*).
\end{equation}
Since in (\ref{eq:next}), $k'/k$ is a finite Galois extension with cyclic Galois group $G$, we have $\rH^2(G,(k')^*) = \hat{\rH}^0(G,(k')^*) = 0$.  We conclude  from (\ref{eq:firstseq}) that the map $\mathrm{Div}(C')^G \to \mathrm{Pic}(C')^G$ is surjective.  However, $C' \to C$ is a cyclic unramified $G$-extension, so $\mathrm{Div}(C')^G = \pi^* \mathrm{Div}(C)$. Using this in (\ref{eq:fishy}) shows that there is a divisor $\mathfrak{e}$ on $C$ such that
\begin{equation}
\label{eq:Enice}
[\mathfrak{c}] =  [D] + \pi^* [\mathfrak{e}].
\end{equation}
We now have 
\begin{eqnarray}
\mathrm{Norm}_{k(C')/k(C)} [\mathfrak{c}] &=& \mathrm{Norm}_{k(C')/k(C)}[D] + \mathrm{Norm}_{k(C')/k(C)} \pi^*[\mathfrak{e}] \nonumber \\
\label{eq:normeq}
&=& \mathrm{Norm}_{k(C')/k(C)}[D] + n \cdot [\mathfrak{e}].
\end{eqnarray}

 Let $[D]_1$ be the image of $D\in T_\ell(C)$ in $\mathrm{Pic}^0(C)[\ell^\infty] =  T_\ell(C)/(1 - \Phi_{k,C}) T_\ell(C) = T_\ell(C)/nAT_\ell(C)$. 
By diagram (\ref{eq:oomphah}) and equation (\ref{eq:uggg}), we have
\begin{equation}
\label{eq:straight}
\mathrm{Norm}_{C'/C}[D] = [D]_1.
\end{equation}
Formula (\ref{eq:cuptwo4}) together with (\ref{eq:normeq}) and (\ref{eq:straight}) now give
\begin{equation}
\label{eq:cupper}
\alpha \cup [b] =  \mathrm{Norm}_{k(C')/k(C)} [\mathfrak{c}] + \frac{n}{2} [\mathfrak{b}]  = [D]_1 + \frac{n}{2} [\mathfrak{b}]
\end{equation}
since $n \cdot [\mathfrak{e}] $ is trivial in $\mathrm{Pic}(C)/n\mathrm{Pic}(C)$. We need to show that 
\begin{equation}
\label{eq:todolist}
[D]_1 + \frac{n}{2}[\mathfrak{b}] = d\mathcal{L} ([\mathfrak{b}]) \quad \mathrm{in}\quad \frac{T_\ell(C)}{n T_\ell(C) + n A T_\ell(C)} = \frac{\mathrm{Pic}^0(C)}{n\cdot \mathrm{Pic}^0(C)}. 
\end{equation}
Recall that we chose $E \in T_\ell(C)$ so that $AE \in T_\ell(C)$ and 
$$\mathrm{art}_C^0([\mathfrak{b}]) = [AE].$$
The definition of $d\mathcal{L}$ then shows that 
\begin{equation}
\label{eq:dellis}
d\mathcal{L} ([\mathfrak{b}]) = [E] \quad \mathrm{in}\quad \frac{T_\ell(C)}{n T_\ell(C) + n A T_\ell(C)} = \frac{\mathrm{Pic}^0(C)}{n\cdot\mathrm{Pic}^0(C)}.
\end{equation}
Concerning the left side of (\ref{eq:todolist}), $D = UE$ when $U$ is as in (\ref{eq:Udef}).   If $\ell > 2$ then $\frac{n}{2}[\mathfrak{b}] = 0$ in $\mathrm{Pic}^0(C)/n\cdot\mathrm{Pic}^0(C)$ since $2$ is then invertible mod $n$.  So (\ref{eq:todolist}) for $\ell > 2$ is equivalent to 
\begin{equation}
\label{eq:largerthantwo}
[UE] - [E] = 0 \quad  \mathrm{in}\quad \frac{T_\ell(C)}{nT_\ell(C) + n A T_\ell(C)} \quad \mathrm{when}\quad \ell > 2.
\end{equation}
We have 
$$U  - 1 =  A \sum_{i = 1}^{n - 1} \sum_{j = 1}^i \left(\begin{array}{c} i \\ j \end{array}\right) (n A)^{j-1} =  A \frac{n (n -1)}{2} + (\sum_{i = 1}^{n- 1} \sum_{j = 2}^i (n A)^{j-1}) A.$$
So
$$[UE] - [E] = \frac{n (n -1)}{2} [AE] + (\sum_{i = 1}^{n- 1} \sum_{j = 2}^i (n A)^{j-1}) [AE]  = 0 \quad \mathrm{in}\quad \frac{T_\ell(C)}{n T_\ell(C) + n A T_\ell(C)}  \quad \mathrm{when} \quad \ell > 2$$
as required since $2\big|(n -1)$, $AE \in T_\ell(C)$,  $E \in T_\ell(C)$ and $n A$ is an endomorphism of $T_\ell(C)$.  Suppose now that $\ell = 2$. Then (\ref{eq:todolist})  is equivalent to
\begin{equation}
\label{eq:urkel}
[UE] + \frac{n}{2} [AE]  - [E] = 0  \quad \mathrm{in}\quad \frac{T_\ell(C)}{n T_\ell(C) + n A T_\ell(C)} \quad \mathrm{when}\quad \ell = 2.
\end{equation}
We have 
\begin{eqnarray*}
\frac{n}{2} A + U - 1  &=&  \frac{n}{2} A + A \frac{n (n -1)}{2} + (\sum_{i = 1}^{n- 1} \sum_{j = 2}^i (n A)^{j-1}) A\\
&=&  \frac{n^2}{2} A  + (\sum_{i = 1}^{n- 1} \sum_{j = 2}^i (n A)^{j-1}) A \quad \mathrm{when}\quad \ell = 2
\end{eqnarray*}
from which (\ref{eq:urkel}) is clear because $E \in T_\ell(C)$.
\end{proof}

\begin{thm}
\label{thm:legendre2}  
Under Hypothesis $\ref{hyp:primepower}$, the restrictions of $(\ref{eq:cupprodbi})$ $($resp. $(\ref{eq:cupprodtri})$$)$ in which the second $($resp. third\/$)$ argument lies in $\rH^1(k,\mu_n)$ can be computed in the following way. Suppose $\tau$, $\alpha$ and $b$ are as in parts $(i)$ and $(ii)$ of Theorem $\ref{thm:MSformula}$, and $[b] \in \rH^1(k,\mu_n) = k^*/(k^*)^n$.     Let $d = \# \rH^1(k,\mu_n)$.  Then $d\big|n$ and there is a unique homomorphism $c \in \mathrm{Hom}(\mathrm{Gal}(\overline{k}/k),\mu_d)= \rH^1(k,\mu_d)$ such that $c(\Phi_{\overline{k}/k}) = \zeta$ is in $\mu_d(k) \subset k^*$ and  $[b]$ is represented by the one-cocycle $c$.  Let $[a] \in \rH^1(C,\mu_n)$ be the image of $\alpha$ under the homomorphism $\rH^1(C,\mathbb{Z}/n) \to \rH^1(C,\mu_n)$ induced by the unique homomorphism of sheaves   $\mathbb{Z}/n \to \mu_n$ that sends the global section $1$ to the global section $\zeta$.  Let $[\mathfrak{a}] \in \mathrm{Pic}^0(C)[n]$ be the image of $[a]$ under the map $\rH^1(C,\mu_n) \to \mathrm{Pic}^0(C)[n]$ produced by the Kummer sequence.  Then 
\begin{equation}
\label{eq:cuptwo2}
\alpha \cup [b] =  - d\mathcal{L} ([\mathfrak{a}])  \in \mathrm{Pic}(C) /n\cdot \mathrm{Pic}(C) = \rH^2(C,\mu_n).
\end{equation}
and
\begin{equation}
\label{eq:formula2}
\tau \cup \alpha \cup [b] =  - \tau(d\mathcal{L} ([\mathfrak{a}])) \in \mathbb{Z}/n = \rH^3(C,\mu_n).
\end{equation}
\end{thm}

\begin{proof} 
 Define $\beta \in \rH^1(k,\mathbb{Z}/n) = \mathrm{Hom}(\mathrm{Gal}(\overline{k}/k),\mathbb{Z}/n)$ to be the homomorphism that sends $\Phi_{\overline{k}/k}$ to $1$.  In view of Theorem \ref{thm:legendre1}, it will suffice to show 
\begin{equation}
\label{eq:reverse}
\alpha \cup [b] = -  \beta \cup [a] \quad \mathrm{and} \quad \tau \cup \alpha \cup [b] = - \tau \cup \beta \cup [a].
\end{equation}

The group $\rH^1(k,\mu_n) = k^*/(k^*)^n$ has order $d = \mathrm{gcd}(n,\# (k^*))$, so $k^*$ contains a primitive $d^{\mathrm{th}}$ root of unity.  Let $\phi_j:\rH^j(k,\mu_d) \to \rH^j(k,\mu_n)$ and $\psi_j:\rH^j(C,\mu_d) \to \rH^j(C,\mu_n)$ be the homomorphisms induced by the natural injection $\mu_d \to \mu_n$.  Moreover, let $\lambda_j:\rH^j(C,\mathbb{Z}/n) \to \rH^j(C,\mathbb{Z}/d)$ be the homomorphisms induced by the natural surjection $\mathbb{Z}/n \to \mathbb{Z}/d$.

 From the exact sequence of sheaves 
$$1 \to \mu_d \to \mu_n \to \mu_{n/d} \to 1$$
we get an exact sequence
$$\rH^0(k,\mu_{n/d}) \to \rH^1(k,\mu_d) \to \rH^1(k,\mu_n) \to \rH^1(k,\mu_{n/d}).$$
Since $\mathrm{gcd}(n/d,\# (k^*)) = 1$, we have $\rH^0(k,\mu_{n/d}) = \mu_{n/d}(k) = \{1\}$ and $\rH^1(k,\mu_{n/d}) = k^*/(k^*)^{n/d} = \{1\}$. So $\phi_1:\rH^1(k,\mu_d)  = \mathrm{Hom}(\mathrm{Gal}(\overline{k}/k),\mu_d) \to \rH^1(k,\mu_n)$ is an isomorphism.   Let $[b]' =  \phi_1^{-1}([b])$, let $\alpha'=\lambda_1(\alpha)$ and $\tau'=\lambda_1(\tau)$.  The natural maps of sheaves $\mathbb{Z}/n \otimes \mu_d \to \mu_n$ and $\mathbb{Z}/n \otimes \mathbb{Z}/n \otimes \mu_d \to \mu_n$ imply that 
\begin{equation}
\label{eq:result}
\psi_2(\alpha' \cup [b]') = \alpha \cup [b] \quad \mathrm{and} \quad \psi_3(\tau' \cup \alpha' \cup [b]') = \tau \cup \alpha \cup [b].
\end{equation}
Hence to prove Theorem \ref{thm:legendre2}, we can reduce to the case in which $d = n$.  

Now $d  = n$ implies $k^*$ contains a primitive $n^{\mathrm{th}}$ root of unity $\gamma$. There is a unique isomorphism of sheaves $\mathbb{Z}/n \to \mu_n$ sending the global section $1$ to the global section $\gamma$ and a unique homomorphism of sheaves $\mu_n \to \mu_n^{\otimes 2}$ sending $\gamma^r$ to $(\gamma \otimes \gamma)^r$. Let $\nu_j:\rH^j(C,\mathbb{Z}/n) \to \rH^j(C,\mu_n)$ and $\xi_j:\rH^j(C,\mu_n) = \rH^j(C,\mu_n^{\otimes 2})$ be the resulting isomorphisms.  Then 
$$
\nu_1(\alpha) \cup [b] = \xi_2(\alpha \cup [b]) \in \rH^2(C,\mu_n^{\otimes 2}).$$
However, we can regard $[b] \in \rH^1(k,\mu_n) = \mathrm{Hom}(\mathrm{Gal}(\overline{k}/k),\mu_n)$ as an element of $\rH^1(C,\mu_n) = \mathrm{Hom}(\pi_1(C),\mu_n)$. So since the cup product on $\rH^1(C,\mu_n)$ is anticommutative, we have
$$\nu_1(\alpha) \cup [b] = - [b] \cup \nu_1(\alpha) \quad \mathrm{in}\quad \rH^2(C,\mu_n^{\otimes 2}).$$
Now by considering cocycle representatives we see 
$$\xi_2(\beta \cup [a]) = [b] \cup \nu_1(\alpha).$$
Therefore
$$\alpha \cup [b] = \xi_2^{-1}(\nu_1(\alpha) \cup [b]) = - \xi_2^{-1}([b] \cup \nu_1(\alpha)) = - \beta \cup [a]$$
and this implies (\ref{eq:reverse}) and completes the proof.
\end{proof}

\section{Cup products of normalized classes on curves of arbitrary positive genus}
\label{s:arbitrarygenus}
\setcounter{equation}{0}

Throughout this section we will assume Hypothesis \ref{hyp:primepower}, i.e $n=\ell^z$. We let $O$ be a fixed closed point of $C$ with degree $d(O)$ prime to $\ell\cdot (\ell-1)$, which exists by Lemma \ref{lem:normalize}. The goal of this section is to prove Theorem \ref{thm:cupsizeresult}. 
In \S\ref{s:ellcurves}, we will focus on the case when the genus $g(C)=1$ and we will prove Theorem  \ref{thm:EllThm} by proving that condition (ii) of Theorem \ref{thm:cupsizeresult} holds in this case.
In \S\ref{s:genustwo}, we will give an infinite family of curves of genus $2$ for which the equivalent conditions of  Theorem $\ref{thm:cupsizeresult}$ do not hold when $n = \ell = 3$.  

\begin{proof}[Proof of Theorem $\ref{thm:cupsizeresult}$]
We consider the natural non-degenerate pairings from Lemma \ref{lem:dualer}
\begin{equation}
\label{eq:triplecup}
\xymatrix{
 \rH^1(C,\mathbb{Z}/n) \times \rH^2(C,\mu_n) \;\, \ar@{=}[d] \ar[r]& \rH^3(C,\mu_n)\ar@{=}[d]\\
 \mathrm{Hom}(\mathrm{Pic}(C),\mathbb{Z}/n) \times \displaystyle \frac{\mathrm{Pic}(C)}{n \cdot \mathrm{Pic}(C)}\ar[r] & \mathbb{Z}/n
}
\end{equation}
where the pairing in the top row is given by the cup product and in the bottom row by the evaluation map. 

The formula in part (i) of Theorem \ref{thm:cupsizeresult} implies that classes in the cup product
\begin{equation}
\label{cup:normalized}
\rH^1(C,\mathbb{Z}/n)_O \cup \rH^1(C,\mu_n)_O \quad\subseteq\quad \rH^2(C,\mu_n) = \mathrm{Pic}(C)/n \cdot \mathrm{Pic}(C)
\end{equation}
are multiples of the image of $[O]$ in $\mathrm{Pic}(C)/n \cdot \mathrm{Pic}(C)$. On the other hand, by part (i) of Lemma \ref{lem:normalizedcharacter}, elements $\alpha\in \rH^1(C,\mathbb{Z}/n)_O\subseteq \rH^1(C,\mathbb{Z}/n) = \mathrm{Hom}(\mathrm{Pic}(C),\mathbb{Z}/n)$ satisfy $\alpha([O]) = 0$. Therefore, part (i) implies part (ii).

Part (ii) of Theorem \ref{thm:cupsizeresult} implies that elements in the cup product of normalized classes in (\ref{cup:normalized}) are orthogonal to $\rH^1(C,\mathbb{Z}/n)_O$ under the non-degenerate pairing (\ref{eq:triplecup}). In fact, the direct sum decomposition of $\mathrm{Pic}(C)/n\cdot \mathrm{Pic}(C)$ in part (i) of Lemma \ref{lem:normalizedcharacter} shows that the orthogonal complement of $\rH^1(C,\mathbb{Z}/n)_O$ in $\mathrm{Pic}(C)/n \cdot \mathrm{Pic}(C)$ is equal to the cyclic subgroup of order $n$ generated by the image of $[O]$. This means that every element of the cup product in (\ref{cup:normalized}) of normalized classes is a multiple of the image of $[O]$ in $\mathrm{Pic}(C)/n \cdot \mathrm{Pic}(C)$. We can then read off this multiple by restricting to the algebraic closure and using that $d(O)$ is prime to $\ell\cdot (\ell-1)$, leading to the formula in part (i).
\end{proof}

\section{The genus one case}
\label{s:ellcurves}
\setcounter{equation}{0}

In this section, we will focus on  the case in which $C$ has genus $1$.  We make the assumptions and use the notation from \S\ref{s:complements}, \S\ref{s:arithcovers} and \S\ref{s:arbitrarygenus}. In particular, $n=\ell^z$ and $O$ is a closed point of $C$ of degree $d(O)$ prime to $\ell \cdot (\ell -1)$.  

Our goal is to prove Theorem \ref{thm:EllThm}. By Theorem \ref{thm:basechange}, it is enough to consider the case in which  $k(O) = k$, so that $d(O) = 1$. Hence we make the following assumptions throughout this section.

\begin{hypothesis}
\label{hyp:elliptic}
The genus of $C$ is $g(C)=1$, $n=\ell^z$ as in Hypothesis $\ref{hyp:primepower}$, and $O$ is a closed point of $C$ of degree $d(O)=1$.
\end{hypothesis}

A key ingredient in the proof of Theorem \ref{thm:EllThm} is the following result.

\begin{thm}
\label{thm:WeilCup}
Under Hypothesis $\ref{hyp:elliptic}$, 
let $\mathcal{F}_n$ be either the constant sheaf $\mathbb{Z}/n$ or the locally constant sheaf $\mu_n$. Let $\alpha\in \rH^1(C,\mathbb{Z}/n)_O$ and $\beta\in\rH^1(C,\mathcal{F}_n)_O$, and let $\overline{\alpha}$ and $\overline{\beta}$ be the restrictions of $\alpha$ and $\beta$ to $\rH^1(\overline{C},\mathbb{Z}/n)$ and $\rH^1(\overline{C},\mathcal{F}_n)$, respectively. If the cup product
$$\overline{\alpha} \cup_{\overline{C}} \overline{\beta} = 0 \quad \mbox{ in}\quad \rH^2(\overline{C},\mathcal{F}_n)$$
then the cup product 
$$\alpha\cup \beta =0 \quad \mbox{ in}\quad \rH^2(C,\mathcal{F}_n).$$
\end{thm}

We will defer the proof of Theorem \ref{thm:WeilCup} to \S\ref{s:cohomological} since we will use continuous group cohomology, which requires some additional notational set up.

Another key ingredient in the proof of Theorem \ref{thm:EllThm} is the following result about bilinear pairings on finite modules over $\mathbb{Z}/n$ that are generated by at most two elements.

\begin{lemma}
\label{lem:bilinear}
Suppose $M_1,M_2$ are finite modules over $\mathbb{Z}/n$ that are both generated by at most $2$ elements. Let 
$$\langle \ , \ \rangle: \quad M_1\times M_2 \to \mathbb{Z}/n$$
be a bilinear pairing. Then one of the following three conditions holds:
\begin{enumerate}
\item[(a)] $M_1$ is cyclic,
\item[(b)] $M_2$ is cyclic, say it is generated by $m_2$, $M_1$ is not cyclic, and there are generators $m_{11},m_{12}$ of $M_1$ such that $\langle m_{12}, m_2\rangle = 0$,
\item[(c)] $M_1$ and $M_2$ are both not cyclic, and there exist generators $m_{11},m_{12}$ of $M_1$ and $m_{21},m_{22}$ of $M_2$ such that $\langle m_{1i},m_{2j}\rangle = 0$ if $i \ne j$ in $\{1,2\}$.
\end{enumerate}
\end{lemma}

\begin{proof}
Suppose this is false, and that the pair $(M_1, M_2)$ is a counterexample  that minimizes $(\#M_1)\cdot(\#M_2)$. Then $M_1$ is not cyclic. If $M_2$ were cyclic and generated by $m_2$, then there would be generators $m_{11},m_{12}$ of $M_1$ such that $\langle m_{12}, m_2\rangle = 0$, contradicting that property (b) is not satisfied. Hence $M_2$ is not cyclic.

Suppose first that there exists an element  $y \in M_2$ of order $\ell$ such that $\langle M_1,y\rangle =0$. Then if we define $M_2'=M_2/\mathbb{Z} y $, the pairing $\langle \ , \ \rangle$ induces a bilinear pairing
$$\langle \ , \ \rangle' : M_1\times M_2' \to \mathbb{Z}/n.$$
By the minimality of $(M_1,M_2)$, the pairing $\langle \ , \ \rangle'$ must satisfy one of the properties (a) - (c). We know that $M_1$ and $M_2$ are both not cyclic. Note that $M_2'$ cannot be the zero module, since otherwise $\langle M_1,M_2\rangle =0$, which would imply that $\langle \ , \ \rangle$ must satisfy property (c), a contradiction. Suppose next that $M_2'$ is cyclic, which means that $M_2=\mathbb{Z} m_{21}\oplus \mathbb{Z} m_{22}$ with $y = m_{22}$ and $\ell m_{22}=0$. Since $\langle \ , \ \rangle'$ must then satisfy property (b), there are generators $m_{11},m_{12}$ of $M_1$ such that $\langle m_{12}, m_{21}\rangle = 0$. Since $\langle M_1,y\rangle =0$ and $m_{22} = y$, we also have that $\langle m_{11}, m_{22}\rangle = 0$. Hence $\langle \ , \ \rangle$ again satisfies property (c), a contradiction. Therefore, $M_2'$ is not cyclic. This means that $\langle \ , \ \rangle'$ must satisfy property (c) for some choice of generators $m_{11},m_{12}$ of $M_1$ and $m'_{21},m'_{22}$ of $M_2'$. Lifting $m'_{21},m'_{22}$ to generators $m_{21},m_{22}$ of $M_2$, we see that then $\langle \ , \ \rangle$ again satisfies property (c), a contradiction. Hence we conclude that $\langle M_1,y\rangle \ne 0$ for all $y \in M_2$ of order $\ell$. This implies that the map
\begin{eqnarray*}
M_2 &\to& \mathrm{Hom}(M_1,\mathbb{Z}/n)\\
y&\mapsto &\{ x\mapsto \langle x , y \rangle\}
\end{eqnarray*}
is injective. In particular, $\#M_2\le \# \mathrm{Hom}(M_1,\mathbb{Z}/n) = \#M_1$.

In a similar way, one can prove that there is no $x \in M_1$ of order $\ell$ such that 
$\langle x,M_2\rangle = 0$.  Therefore, the map
\begin{eqnarray*}
M_1 &\to& \mathrm{Hom}(M_2,\mathbb{Z}/n)\\
x&\mapsto &\{ y\mapsto \langle x , y \rangle\}
\end{eqnarray*}
is injective, and $\#M_1\le \# \mathrm{Hom}(M_2,\mathbb{Z}/n) = \#M_2$.

Since we also have $\#M_2\le \#M_1$, we conclude that $M_1$ maps isomorphically to $\mathrm{Hom}(M_2,\mathbb{Z}/n)=M_2^*$. Because $M_2$ is not cyclic, we can write
\begin{eqnarray*}
M_2 &=& \mathbb{Z} b_1 \oplus \mathbb{Z} b_2 \quad\mbox{ and}\\
M_1=M_2^* &=& \mathbb{Z} b_1^* \oplus \mathbb{Z} b_2^*,
\end{eqnarray*}
where $\langle b_1^*,b_2\rangle = b_1^*(b_2) = 0$ and $\langle b_2^*,b_1\rangle = b_2^*(b_1) = 0$. But this means that $\langle \ , \ \rangle$ satisfies property (c), a contradiction. In other words, no minimal counterexample $(M_1, M_2)$ exists, which completes the proof of Lemma \ref{lem:bilinear}.
\end{proof}

\begin{proof}[Proof of Theorem $\ref{thm:EllThm}$]
We use that the Weil pairing is alternating (see, for example, \cite[Theorem 1]{Howe}), which implies that the cup product $\cup_{\overline{C}}$ in (\ref{eq:ohWeil!}) is alternating when $\mathcal{F}_n=\mathbb{Z}/n$. Hence it follows by Theorem \ref{thm:WeilCup} that 
\begin{equation}
\label{eq:cupnormalizedalt}
\alpha \cup \alpha = 0 \quad\mbox{ for all $\alpha\in \rH^1(C,\mathbb{Z}/n)_O$}.
\end{equation}

We apply Lemma \ref{lem:bilinear} to $M_1=\rH^1(C,\mathbb{Z}/n)_O$, $M_2=\rH^1(C,\mu_n)_O$ and the bilinear pairing $\langle \ , \ \rangle$ that is defined as follows. Let $\alpha\in \rH^1(C,\mathbb{Z}/n)_O$ and $[b]\in\rH^1(C,\mu_n)_O$, and define
$$\langle \alpha,[b]\rangle = \overline{\alpha} \cup_{\overline{C}} \overline{[b]} \quad\mbox{ in $\mathbb{Z}/n=\rH^2(\overline{C},\mu_n)$}$$
to be the value of the pairing in $(\ref{eq:ohWeil!})$ when $\mathcal{F}_n=\mu_n$ and $\overline{\alpha}$ and $\overline{[b]}$ are the restrictions of $\alpha$ and $[b]$ to $\rH^1(\overline{C},\mathbb{Z}/n)$ and $\rH^1(\overline{C},\mu_n)$, respectively. By Lemma \ref{lem:bilinear}, we need to analyze the following three cases:

\begin{enumerate}
\item[(a)] $\rH^1(C,\mathbb{Z}/n)_O$ is cyclic. In this case $\rH^1(C,\mathbb{Z}/n)_O\cup \rH^1(C,\mathbb{Z}/n)_O$ is trivial by (\ref{eq:cupnormalizedalt}), so condition (ii) of Theorem \ref{thm:cupsizeresult} holds.

\item[(b)] $\rH^1(C,\mu_n)_O$ is cyclic, say it is generated by $[b]$, $\rH^1(C,\mathbb{Z}/n)_O$ is not cyclic, and there are generators $\alpha_1,\alpha_2$ of $\rH^1(C,\mathbb{Z}/n)_O$ such that $\overline{\alpha_2} \cup_{\overline{C}} \overline{[b]} = 0$. By Theorem \ref{thm:WeilCup}, we then have $\alpha_2\cup [b]=0$ in $\rH^2(C,\mu_n)$. Since $\alpha_1$ and $\alpha_2$ generate $\rH^1(C,\mathbb{Z}/n)_O$, it follows from (\ref{eq:cupnormalizedalt}) that the cup product $\alpha_1\cup \alpha_2$ generates $\rH^1(C,\mathbb{Z}/n)_O\cup \rH^1(C,\mathbb{Z}/n)_O$. But then 
$$(\alpha_1\cup \alpha_2)\cup [b] = \alpha_1\cup (\alpha_2\cup [b]) = 0$$
generates $\rH^1(C,\mathbb{Z}/n)_O\cup \rH^1(C,\mathbb{Z}/n)_O\cup \rH^1(C,\mu_n)_O$, which implies condition (ii) of Theorem \ref{thm:cupsizeresult}.

\item[(c)] $\rH^1(C,\mathbb{Z}/n)_O$ and $\rH^1(C,\mu_n)_O$ are both not cyclic, and there exist generators $\alpha_1,\alpha_2$ of $\rH^1(C,\mathbb{Z}/n)_O$ and $[b_1],[b_2]$ of $\rH^1(C,\mu_n)_O$ such that $\overline{\alpha_i} \cup_{\overline{C}} \overline{[b_j]} = 0$ if $i \ne j$ in $\{1,2\}$. By Theorem \ref{thm:WeilCup}, we then have $\alpha_i\cup [b_j]=0$ in $\rH^2(C,\mu_n)$ if $i \ne j$ in $\{1,2\}$. As in case (b), it follows that $\alpha_1\cup \alpha_2$ generates $\rH^1(C,\mathbb{Z}/n)_O\cup \rH^1(C,\mathbb{Z}/n)_O$. Since $\alpha_1\cup \alpha_2 = -\alpha_2\cup \alpha_1$, we obtain
$$\begin{array}{rcccccl}
(\alpha_1\cup \alpha_2)\cup [b_1] &=& \alpha_1\cup (\alpha_2\cup [b_1]) &=& 0,\\
(\alpha_1\cup \alpha_2)\cup [b_2] &=& (-\alpha_2\cup \alpha_1)\cup [b_2] &=&-\alpha_2\cup (\alpha_1\cup [b_2]) &=& 0.
\end{array}$$
Hence condition (ii) of Theorem \ref{thm:cupsizeresult} holds.
\end{enumerate}
Therefore, Theorem \ref{thm:EllThm} follows from Theorem \ref{thm:cupsizeresult} in all cases (a) - (c).
\end{proof}

\section{Cohomological calculations in the genus one case}
\label{s:cohomological}
\setcounter{equation}{0}

In this section we prove Theorem \ref{thm:WeilCup} using arguments from continuous group cohomology. We assume Hypothesis \ref{hyp:elliptic}. We need the following notation.

\begin{note}
\label{not:pi1}
Let $\overline{k(C)}$ be a fixed separable closure of $k(C)$ containing $\overline{k}$, and let $M(C)$ be the maximal everywhere unramified extension of $k(C)$ inside $\overline{k(C)}$. Then $\pi_1(C,\eta)=\mathrm{Gal}(M(C)/k(C))$. We have a short exact sequence
$$0\to \pi_1(\overline{C},\eta) \to \pi_1(C,\eta) \to \mathrm{Gal}(\overline{k}/k) \to 1.$$
The arithmetic Frobenius $\Phi_{\overline{k}/k}$ on $\overline{k}$, which the $q^{\mathrm{th}}$ power map, progenerates $\mathrm{Gal}(\overline{k}/k)$. Moreover, $\pi_1(\overline{C},\eta)$ is isomorphic to the adelic Tate module $T_{\mathbb{A}}(C) = \prod_{\ell'} T_{\ell'}(C)$, where $\ell'$ runs over all primes and $T_{\ell'}(C)$ is the $\ell'$-adic Tate module of $C$.  Defining
$$\Gamma = \frac{\pi_1(C,\eta)}{\prod_{\ell'\ne \ell} T_{\ell'}(C)}$$
we obtain a short exact sequence
$$0\to T_\ell(C) \to \Gamma \to \mathrm{Gal}(\overline{k}/k) \to 1$$
and an isomorphism
\begin{equation}
\label{eq:Gamma}
\Gamma = T_\ell(C)  \rtimes \mathrm{Gal}(\overline{k}/k) .
\end{equation}

Let $\tilde{O}$ be a place over $O$ in $M(C)$ with the properties in part (ii) of Lemma \ref{lem:normalizedcharacter}.  Let $\Gamma_{\tilde{O}}$ be the image in $\Gamma$ of the decomposition group of $\tilde{O}$ in $\pi_1(\overline{C},\eta) = \mathrm{Gal}(M(C)/k(C))$.  Let $\Phi$ be the Frobenius progenerator of $\Gamma_{\tilde O}$, so that $\Phi$ is the unique element mapping to $\Phi_{\overline{k}/k}$ in $\mathrm{Gal}(\overline{k}/k)$. 
Let $a_1$ and $a_2$ be progenerators of $T_\ell(C)$. We will write $T_\ell(C)$ multiplicatively, i.e. each element can be expressed as
$$a_1^{t_1}\cdot a_2^{t_2}$$
for unique $t_1,t_2\in \mathbb{Z}_\ell$.
We have that $\Gamma$ is progenerated by $a_1$, $a_2$ and $\Phi$.
\end{note}

\begin{rem}
\label{rem:barresolution}
Let $G$ be a profinite group, and consider the standard resolution $B_{G,\bullet}\to \widehat{\mathbb{Z}}$ of $\widehat{\mathbb{Z}}$ as a $\widehat{\mathbb{Z}}\ps{G}$-module, where
\begin{equation}
\label{eq:bar1}
\begin{array}{lccccccccccc}
B_{G,\bullet}= &  \cdots &\xrightarrow{\partial_{G,3}}& \widehat{\mathbb{Z}}\ps{G\times G\times G} &\xrightarrow{\partial_{G,2}}& \widehat{\mathbb{Z}}\ps{G\times G} &\xrightarrow{\partial_{G,1}}& \widehat{\mathbb{Z}}\ps{G} \\
&&&&&(g_0,g_1) &\mapsto& g_1-g_0\\
&&&(g_0,g_1,g_2) &\mapsto& (g_1,g_2)-(g_0,g_2)+(g_0,g_1)
\end{array}
\end{equation}
is concentrated in degrees $\ge 0$ and  $g_0,g_1,\ldots$ are arbitrary elements of $G$ (see \cite[\S 6.2]{RibesZ}). Recall that the corresponding bar resolution is obtained by introducing the ``bar notation''
$$[g_1|g_2|\cdots|g_m]=(1,g_1,g_1g_2,\ldots, g_1g_2\ldots g_m)$$
for all $g_1,g_2,\ldots,g_m$.

Note that we have a contracting homotopy $H_{G,m}: B_{G,m}=\widehat{\mathbb{Z}}\ps{G^{m+1}} \to B_{G,m+1}=\widehat{\mathbb{Z}}\ps{G^{m+2}}$, given by $H_{G,m}(g_0,g_1,\ldots,g_m)=(1,g_0,g_1,\ldots,g_m)$ (see \cite[\S I.5]{brown}), which satisfies 
\begin{equation}
\label{eq:contracthom}
(\partial_{G,m+1}\circ H_{G,m}) (w) = w\quad\mbox{ for all $w\in\mathrm{Image}(\partial_{G,m+1})\subseteq B_m$.}
\end{equation}

Let $M$ and $N$ be two discrete $\widehat{\mathbb{Z}}\ps{G}$-modules, and let $[\phi_M]\in \rH^1(G,M)$ and $[\phi_N]\in \rH^1(G,N)$ be classes that are represented by continuous one-cocycles $\phi_M\in C^1(G,M)$ and $\phi_N\in C^1(G,N)$. Then the cup product $[\phi_M]\cup[\phi_N]\in \rH^2(G,M\otimes_{\mathbb{Z}} N)$ is represented by the continuous two-cycle $\phi_M\cup\phi_N$ given by
\begin{equation}
\label{eq:cupbar}
(\phi_M\cup\phi_N)([g_1|g_2]) = - \phi_M([g_1])\otimes g_1\phi_N([g_2])
\end{equation}
for all $g_1,g_2\in G$ (see \cite[\S V.3]{brown}).
\end{rem}

Let $\overline{\alpha}\in \rH^1(\overline{C},\mathbb{Z}/n)=\mathrm{Hom}(T_\ell(C),\mathbb{Z}/n)$ and let $\overline{\beta}\in \rH^1(\overline{C},\mathcal{F}_n) = \mathrm{Hom}(T_\ell(C),\tilde{\mathcal{F}}_n)$, where the sheaf $\mathcal{F}_n$ is $\mathbb{Z}/n$ or $\mu_n$, and the group $\tilde{\mathcal{F}}_n$ is $\mathbb{Z}/n$ or $\tilde{\mu}_n$, respectively.
We want to use a free resolution $W_\bullet\to \widehat{\mathbb{Z}}\to 0$ of $\widehat{\mathbb{Z}}$ as a $\widehat{\mathbb{Z}}\ps{T_\ell(C)}$-module that is different from the standard resolution $B_{T_\ell(C),\bullet}\to \widehat{\mathbb{Z}}$ to provide a different presentation of the cup product $\overline{\alpha}\cup_{\overline{C}}\overline{\beta} \in \rH^2(\overline{C},\mathcal{F}_n)$. We define $W_\bullet$ to be the (Koszul) complex
\begin{equation}
\label{eq:resolvealgclosure}
\begin{array}{lccccccccccc}
W_\bullet= &  \cdots \; 0 &\xrightarrow{\delta_3=0}& \widehat{\mathbb{Z}}\ps{T_\ell(C)}h &\xrightarrow{\delta_2}& \widehat{\mathbb{Z}}\ps{T_\ell(C)} f_1\oplus \widehat{\mathbb{Z}}\ps{T_\ell(C)}f_2 &\xrightarrow{\delta_1}& \widehat{\mathbb{Z}}\ps{T_\ell(C)}e \\
&&&&&f_1\oplus 0 \;\,&\mapsto& (a_1-1)e\\
&&&&& \;\, 0\oplus f_2  &\mapsto& (a_2-1)e\\
&&&h &\mapsto& (a_2-1)f_1 \oplus (1-a_1)f_2
\end{array}
\end{equation}
which only has nonzero terms in degrees $0$, $1$ and $2$. Note that the basis elements $h,f_1,f_2,e$ are only used to more readily distinguish between the different nonzero terms of $W_\bullet$. 

Writing $B_\bullet = B_{T_\ell(C),\bullet}$ and $\partial_m=\partial_{T_\ell(C),m}$, we obtain a morphism of complexes
$$\xymatrix{
W_\bullet\ar[d]_{\lambda_\bullet} &0 \ar[r]^(.4){0} & \widehat{\mathbb{Z}}\ps{T_\ell(C)}h \ar[d]_{\lambda_2}\ar[r]^(.4){\delta_2} & \widehat{\mathbb{Z}}\ps{T_\ell(C)} f_1\oplus \widehat{\mathbb{Z}}\ps{T_\ell(C)}f_2 \ar[d]_{\lambda_1}\ar[r]^(.6){\delta_1} & \widehat{\mathbb{Z}}\ps{T_\ell(C)}e\ar[d]_{\lambda_0}\\
B_\bullet & \cdots \ar[r]^(.2){\partial_3}& \widehat{\mathbb{Z}}\ps{T_\ell(C)\times T_\ell(C)\times T_\ell(C)} \ar[r]^(.55){\partial_2}& \widehat{\mathbb{Z}}\ps{T_\ell(C)\times T_\ell(C)} \ar[r]^(.6){\partial_1}& \widehat{\mathbb{Z}}\ps{T_\ell(C)}
}$$
where
\begin{eqnarray}
\label{eq:lambda0}
\lambda_0(e) &=& 1,\\
\label{eq:lambda11}
\lambda_1(f_1\oplus 0) &=& (1,a_1),\\
\label{eq:lambda12}
\lambda_1(0\oplus f_2) &=& (1,a_2),\\
\label{eq:lambda2}
\lambda_2(h) &=& (1,a_2,a_2a_1) - (1,1,a_1) + (1,1,a_2) - (1,a_1,a_1a_2).
\end{eqnarray}
These equalities, and in particular (\ref{eq:lambda2}), can be found using property (\ref{eq:contracthom}) of the contracting homotopy $H_{T_\ell(C),m}$.

\begin{lemma}
\label{lem:cupalgclosure}
Let $\mathcal{F}_n$ be the sheaf $\mathbb{Z}/n$ or $\mu_n$, and let $\tilde{\mathcal{F}}_n$ be the group $\mathbb{Z}/n$ or $\tilde{\mu}_n$, respectively. We have an isomorphism
$$\rH^2(\overline{C},\mathcal{F}_n)=
\mathrm{Hom}_{\widehat{\mathbb{Z}}\ps{T_\ell(C)}}(\widehat{\mathbb{Z}}\ps{T_\ell(C)}h,\tilde{\mathcal{F}}_n).$$
If $\overline{\alpha}\in \rH^1(\overline{C},\mathbb{Z}/n)=\mathrm{Hom}(T_\ell(C),\mathbb{Z}/n)$ and $\overline{\beta}\in \rH^1(\overline{C},\mathcal{F}_n) = \mathrm{Hom}(T_\ell(C),\tilde{\mathcal{F}}_n)$ then
$\overline{\alpha} \cup_{\overline{C}} \overline{\beta}\in \mathrm{Hom}_{\widehat{\mathbb{Z}}\ps{T_\ell(C)}}(\widehat{\mathbb{Z}}\ps{T_\ell(C)}h,\tilde{\mathcal{F}}_n)$ is given by
\begin{equation}
\label{eq:Weil1}
\left(\overline{\alpha} \cup_{\overline{C}} \overline{\beta}\right)(h) = 
\overline{\alpha}(a_1)\overline{\beta}(a_2) - \overline{\alpha}(a_2)\overline{\beta}(a_1) 
\quad\mbox{ if}\quad \mathcal{F}_n=\mathbb{Z}/n
\end{equation}
and by
\begin{equation}
\label{eq:Weil2}
\left(\overline{\alpha} \cup_{\overline{C}} \overline{\beta}\right)(h) = 
\overline{\beta}(a_2)^{\overline{\alpha}(a_1)}  \cdot \overline{\beta}(a_1)^{-\overline{\alpha}(a_2)}  \quad\mbox{ if}\quad \mathcal{F}_n=\mu_n.
\end{equation}
\end{lemma}

\begin{proof}
Since $n=\ell^z$, we have $\rH^2(\overline{C},\mathcal{F}_n) = \rH^2(T_\ell(C),\tilde{\mathcal{F}}_n)$ by Lemma \ref{lem:achinger}.
To prove the first statement, we consider the map
$$\delta_2^*: \mathrm{Hom}_{\widehat{\mathbb{Z}}\ps{T_\ell(C)}}(\widehat{\mathbb{Z}}\ps{T_\ell(C)}f_1\oplus \widehat{\mathbb{Z}}\ps{T_\ell(C)}f_2,\tilde{\mathcal{F}}_n)\to \mathrm{Hom}_{\widehat{\mathbb{Z}}\ps{T_\ell(C)}}(\widehat{\mathbb{Z}}\ps{T_\ell(C)}h,\tilde{\mathcal{F}}_n)$$
given by precomposition with $\delta_2$. Since $a_1$ and $a_2$ act trivially on $\tilde{\mathcal{F}}_n$, the image of $\delta_2^*$ is trivial. The first statement now follows from the isomorphism
$$\rH^2(\overline{C},\mathcal{F}_n)=
\frac{\mathrm{Hom}_{\widehat{\mathbb{Z}}\ps{T_\ell(C)}}(\widehat{\mathbb{Z}}\ps{T_\ell(C)}h,\tilde{\mathcal{F}}_n)}{\mathrm{Image}(\delta_2^*)}.$$

Suppose first that $\mathcal{F}_n=\mathbb{Z}/n$. We use the natural isomorphism $\mathbb{Z}/n\otimes_{\mathbb{Z}} \mathbb{Z}/n = \mathbb{Z}/n$ given by multiplication. Since $T_\ell(C)$ acts trivially on $\mathbb{Z}/n$, we have, by (\ref{eq:cupbar}), that $\overline{\alpha} \cup_{\overline{C}} \overline{\beta}$ is represented by the two-cocycle $c_{\overline{\alpha},\overline{\beta}}\in C^2(T_\ell(C),\mathbb{Z}/n)$ satisfying for all $g_1,g_2\in T_\ell(C)$, 
$$c_{\overline{\alpha},\overline{\beta}}([g_1|g_2]) = 
- \overline{\alpha}(g_1)\overline{\beta}(g_2).$$
Using $[g_1|g_2]=(1,g_1,g_1g_2)$ and (\ref{eq:lambda2}), we obtain that, as an element of $\mathrm{Hom}_{\widehat{\mathbb{Z}}\ps{T_\ell(C)}}(\widehat{\mathbb{Z}}\ps{T_\ell(C)}h,\tilde{\mathcal{F}}_n)$, $\overline{\alpha} \cup_{\overline{C}} \overline{\beta}$ is given by
\begin{eqnarray*}
\left(\overline{\alpha} \cup_{\overline{C}} \overline{\beta}\right)(h) &=&
-\overline{\alpha}(a_2)\overline{\beta}(a_1) + \overline{\alpha}(1)\overline{\beta}(a_1) -\overline{\alpha}(1)\overline{\beta}(a_2) + \overline{\alpha}(a_1)\overline{\beta}(a_2)\\
&=& -\overline{\alpha}(a_2)\overline{\beta}(a_1) + \overline{\alpha}(a_1)\overline{\beta}(a_2)
\end{eqnarray*}
which implies (\ref{eq:Weil1}). The case (\ref{eq:Weil2}) when $\mathcal{F}_n=\mu_n$ follows by using the natural isomorphism $\mathbb{Z}/n\otimes_{\mathbb{Z}}\tilde{\mu}_n\to \tilde{\mu}_n$ given by exponentiation and that $T_\ell(C)$ also acts trivially on $\tilde{\mu}_n$.
\end{proof}

Let now $\alpha\in\rH^1(C,\mathbb{Z}/n)_O$ and $\beta\in \rH^1(C,\mathcal{F}_n)_O$, where $\Gamma$ is defined as in (\ref{eq:Gamma}) and $\tilde{\mathcal{F}}_n$ is as above.
We want to use the induced complex $V_\bullet = \mathrm{Ind}_{T_\ell(C)}^\Gamma W_\bullet$ to construct a particular free resolution of $\widehat{\mathbb{Z}}$ as a $\widehat{\mathbb{Z}}\ps{\Gamma}$-module in order to provide a presentation of the cup product $\alpha\cup \beta \in \rH^2(C,\mathcal{F}_n)$.

We consider the following commutative diagram of induced complexes from $T_\ell(C)$ to $\Gamma$
$$\xymatrix{
V_\bullet = \mathrm{Ind}_{T_\ell(C)}^\Gamma W_\bullet \ar[r]\ar[d]_{\Phi-1} & \mathrm{Ind}_{T_\ell(C)}^\Gamma \widehat{\mathbb{Z}} = \widehat{\mathbb{Z}}\ps{\widehat{\langle \Phi\rangle}}\ar[d]_{\Phi-1}\\
V_\bullet = \mathrm{Ind}_{T_\ell(C)}^\Gamma W_\bullet \ar[r]& \mathrm{Ind}_{T_\ell(C)}^\Gamma \widehat{\mathbb{Z}} = \widehat{\mathbb{Z}}\ps{\widehat{\langle \Phi\rangle}}
}$$
where the right vertical morphism $\Phi-1$ is injective with cokernel $\widehat{\mathbb{Z}}$. It follows that the mapping cone of the morphism 
$$V_\bullet = \mathrm{Ind}_{T_\ell(C)}^\Gamma W_\bullet\xrightarrow{\Phi-1} V_\bullet = \mathrm{Ind}_{T_\ell(C)}^\Gamma W_\bullet$$
provides a free resolution $P_\bullet\to \widehat{\mathbb{Z}}$ of $\widehat{\mathbb{Z}}$ as a $\widehat{\mathbb{Z}}\ps{\Gamma}$-module. By \cite[\S2.6]{Bourbaki10}, we have $P_i=V_{i-1}\oplus V_i$ with $i^{\mathrm{th}}$ differential $\delta_{P,i}:P_i\to P_{i-1}$ given by
$$\delta_{P,i}=\left(\begin{array}{cc} -\delta_{V,i-1} & 0\\ -(\Phi-1) & \delta_{V,i}\end{array}\right).$$
More precisely, we have that
\begin{equation}
\label{eq:resolve}
\begin{array}{lc}
P_\bullet= &  \cdots \;0 \to
P_3=V_2 \xrightarrow{\delta_{P,3}} 
P_2 = V_1\oplus V_2 \xrightarrow{\delta_{P,2}} 
P_1=V_0\oplus V_1  \xrightarrow{\delta_{P,1}}
P_0=V_0
\end{array}
\end{equation}
is concentrated in degrees $0$, $1$, $2$ and $3$ with differentials
$$\begin{array}{rcccl}
P_1= &\widehat{\mathbb{Z}}\ps{\Gamma}e \oplus \widehat{\mathbb{Z}}\ps{\Gamma}f_1\oplus \widehat{\mathbb{Z}}\ps{\Gamma}f_2 & \xrightarrow{\delta_{P,1}}& \widehat{\mathbb{Z}}\ps{\Gamma}e &= P_0\\
&e\oplus 0 \oplus 0 & \mapsto & (1-\Phi)e\\
&0\oplus f_1\oplus 0 &\mapsto & (a_1-1)e\\
&0\oplus 0\oplus f_2 &\mapsto & (a_2-1)e,
\end{array}$$
$$\begin{array}{rcccl}
P_2= &\widehat{\mathbb{Z}}\ps{\Gamma}f_1 \oplus \widehat{\mathbb{Z}}\ps{\Gamma}f_2\oplus \widehat{\mathbb{Z}}\ps{\Gamma}h & \xrightarrow{\delta_{P,2}}& \widehat{\mathbb{Z}}\ps{\Gamma}e \oplus \widehat{\mathbb{Z}}\ps{\Gamma}f_1\oplus \widehat{\mathbb{Z}}\ps{\Gamma}f_2  &= P_1\\
&f_1\oplus 0 \oplus 0 & \mapsto & (1-a_1)e\oplus (1-\Phi)f_1\oplus 0\\
&0\oplus f_2\oplus 0 &\mapsto & (1-a_2)e\oplus 0\oplus (1-\Phi)f_2\\
&0\oplus 0\oplus h &\mapsto & 0\oplus (a_2-1)f_1 \oplus (1-a_1)f_2,
\end{array}$$
and
$$\begin{array}{rcccl}
P_3= &\widehat{\mathbb{Z}}\ps{\Gamma}h & \xrightarrow{\delta_{P,3}}& \widehat{\mathbb{Z}}\ps{\Gamma}f_1 \oplus \widehat{\mathbb{Z}}\ps{\Gamma}f_2\oplus \widehat{\mathbb{Z}}\ps{\Gamma}h  &= P_2\\
& h &\mapsto & (a_2-1)f_1 \oplus (1-a_1)f_2 \oplus (1-\Phi)h.
\end{array}$$

Writing $A_\bullet = B_{\Gamma,\bullet}$ and $\partial_{A,m}=\partial_{\Gamma,m}$, we obtain a morphism of complexes
$$\xymatrix @C1.75pc {
P_\bullet\ar[d]_{\nu_\bullet} &\cdots \ar[r]^(.2){\delta_{P,3}} & \widehat{\mathbb{Z}}\ps{\Gamma}f_1 \oplus \widehat{\mathbb{Z}}\ps{\Gamma}f_2\oplus \widehat{\mathbb{Z}}\ps{\Gamma}h \ar[d]_{\nu_2}\ar[r]^{\delta_{P,2}} & \widehat{\mathbb{Z}}\ps{\Gamma}e \oplus \widehat{\mathbb{Z}}\ps{\Gamma}f_1\oplus \widehat{\mathbb{Z}}\ps{\Gamma}f_2 \ar[d]_{\nu_1}\ar[r]^(.7){\delta_{P,1}} & \widehat{\mathbb{Z}}\ps{\Gamma}e\ar[d]_{\nu_0}\\
A_\bullet & \cdots \ar[r]^(.4){\partial_{A,3}}& \widehat{\mathbb{Z}}\ps{\Gamma\times \Gamma\times \Gamma} \ar[r]^{\partial_{A,2}}& \widehat{\mathbb{Z}}\ps{\Gamma\times \Gamma} \ar[r]^(.6){\partial_{A,1}}& \widehat{\mathbb{Z}}\ps{\Gamma}
}$$
where
\begin{eqnarray}
\label{eq:nu0}
\nu_0(e) &=& 1,\\
\label{eq:nu10}
\nu_1(e\oplus 0\oplus 0) &=& (\Phi,1),\\
\label{eq:nu11}
\nu_1(0\oplus f_1\oplus 0) &=& (1,a_1),\\
\label{eq:nu12}
\nu_1(0\oplus 0\oplus f_2) &=& (1,a_2),\\
\label{eq:nu21}
\nu_2(f_1\oplus 0 \oplus 0) &=& (1,\Phi,1)- (1, a_1\Phi,a_1) +(1,1,a_1)-(1,\Phi,\Phi a_1),\\
\label{eq:nu22}
\nu_2(0 \oplus f_2 \oplus 0) &=& (1,\Phi,1)- (1, a_2\Phi,a_2) +(1,1,a_2)-(1,\Phi,\Phi a_2),\\
\label{eq:nu20}
\nu_2(0\oplus 0 \oplus h) &=& (1,a_2,a_2 a_1) - (1,1,a_1)  + (1,1,a_2) - (1,a_1,a_1a_2).
\end{eqnarray}
These equalities, and in particular (\ref{eq:nu21}) - (\ref{eq:nu20}), can be found using again property (\ref{eq:contracthom}) of the contracting homotopy $H_{\Gamma,m}$.

\begin{lemma}
\label{lem:cupelliptic}
Let $\mathcal{F}_n$ and $\tilde{\mathcal{F}}_n$ be as in Lemma $\ref{lem:cupalgclosure}$. We have an isomorphism
\begin{equation}
\label{eq:isocupell}
\rH^2(C,\mathcal{F}_n)=
\frac{\mathrm{Hom}_{\widehat{\mathbb{Z}}\ps{\Gamma}}(\widehat{\mathbb{Z}}\ps{\Gamma}f_1\oplus \widehat{\mathbb{Z}}\ps{\Gamma}f_2\oplus \widehat{\mathbb{Z}}\ps{\Gamma}h,\tilde{\mathcal{F}}_n)}{\mathrm{Image}(\delta_{P,2}^*)}
\end{equation}
where
\begin{equation}
\label{eq:diffcupell}
\delta_{P,2}^*: \mathrm{Hom}_{\widehat{\mathbb{Z}}\ps{\Gamma}}(\widehat{\mathbb{Z}}\ps{\Gamma}e\oplus \widehat{\mathbb{Z}}\ps{\Gamma}f_1\oplus \widehat{\mathbb{Z}}\ps{\Gamma}f_2,\tilde{\mathcal{F}}_n)\to \mathrm{Hom}_{\widehat{\mathbb{Z}}\ps{\Gamma}}(\widehat{\mathbb{Z}}\ps{\Gamma}f_1\oplus \widehat{\mathbb{Z}}\ps{\Gamma}f_2\oplus \widehat{\mathbb{Z}}\ps{\Gamma}h,\tilde{\mathcal{F}}_n)
\end{equation}
is given by precomposition with $\delta_{P,2}$.
Suppose $\alpha\in \rH^1(C,\mathbb{Z}/n)=\mathrm{Hom}(\Gamma,\mathbb{Z}/n)$ and $\beta\in \rH^1(C,\mathcal{F}_n)$ is given by
$$\left\{ \begin{array}{ccl}\beta\in \mathrm{Hom}(\Gamma,\mathbb{Z}/n)&\mbox{if}&\mathcal{F}_n=\mathbb{Z}/n,\\
{[c_b]}\in \rH^1(\Gamma,\tilde{\mu}_n)&\mbox{if}&\mathcal{F}_n=\mu_n,
\end{array}\right.$$
where $b\in D(C)$, $b^{1/n}$ is a chosen $n^{th}$ root of $b$ in $\overline{k(C)}$, and $c_b\in C^1(\Gamma,\tilde{\mu}_n)$ is the one-cocycle defined by $c_b(\gamma)=\gamma(b^{1/n})/b^{1/n}$ for all $\gamma\in \Gamma$. 
Then $\alpha\cup\beta\in \rH^2(C,\mathcal{F}_n)$ is represented by the homomorphism $\xi_{\alpha,\beta}\in \mathrm{Hom}_{\widehat{\mathbb{Z}}\ps{\Gamma}}(\widehat{\mathbb{Z}}\ps{\Gamma}f_1\oplus \widehat{\mathbb{Z}}\ps{\Gamma}f_2\oplus \widehat{\mathbb{Z}}\ps{\Gamma}h,\tilde{\mathcal{F}}_n)$ given by
\begin{equation}
\label{eq:Zn}
\left\{\begin{array}{rcl}
\xi_{\alpha,\beta}(f_1\oplus 0 \oplus 0) &=&\alpha(\Phi)\beta(a_1)-\alpha(a_1)\beta(\Phi),\\
\xi_{\alpha,\beta}(0\oplus f_2 \oplus 0) &=&\alpha(\Phi)\beta(a_2)-\alpha(a_2)\beta(\Phi),\\
\xi_{\alpha,\beta}(0\oplus 0 \oplus h) &=&\alpha(a_1)\beta(a_2) - \alpha(a_2)\beta(a_1),
\end{array}\right.\quad\mbox{ if}\quad \mathcal{F}_n=\mathbb{Z}/n,
\end{equation}
and by
\begin{equation}
\label{eq:mun}
\left\{\begin{array}{rcl}
\xi_{\alpha,\beta}(f_1\oplus 0 \oplus 0) &=&c_b(a_1)^{q\,\alpha(\Phi)}\cdot c_b(\Phi^{-1})^{q\,\alpha(a_1)},\\
\xi_{\alpha,\beta}(0\oplus f_2 \oplus 0) &=&c_b(a_2)^{q\,\alpha(\Phi)}\cdot c_b(\Phi^{-1})^{q\,\alpha(a_2)},\\
\xi_{\alpha,\beta}(0\oplus 0 \oplus h) &=&c_b(a_2)^{\alpha(a_1)} \cdot c_b(a_1)^{-\alpha(a_2)},
\end{array}\right.\quad\mbox{ if}\quad \mathcal{F}_n=\mu_n.
\end{equation}
\end{lemma}

\begin{proof}
Since $n=\ell^z$, we have $\rH^2(C,\mathcal{F}_n) = \rH^2(\Gamma,\tilde{\mathcal{F}}_n)$ by Lemma \ref{lem:achinger}, which implies the isomorphism (\ref{eq:isocupell}). If $\mathcal{F}_n=\mu_n$, then $\rH^1(C,\mu_n)=D(C)/(k(C)^*)^n$. By Lemma \ref{lem:secondgroup}, if $\beta\in\rH^1(C,\mu_n)$ then there exists an element $b\in D(C)$ such that $\beta=[c_b]$ for the one-cocycle $c_{b}\in C^1(\Gamma,\tilde{\mu}_n)$ given in the statement of Lemma \ref{lem:cupelliptic}.

Suppose first that $\mathcal{F}_n=\mathbb{Z}/n$. As above, we use the natural isomorphism $\mathbb{Z}/n\otimes_{\mathbb{Z}} \mathbb{Z}/n = \mathbb{Z}/n$ given by multiplication. Since $T_\ell(C)$ and $\Phi$ act trivially on $\mathbb{Z}/n$, so does $\Gamma$. Therefore, we have, by (\ref{eq:cupbar}), that $\alpha \cup \beta\in \rH^2(C,\mathbb{Z}/n)$ is represented by the two-cocycle $c_{\alpha,\beta}\in C^2(\Gamma,\mathbb{Z}/n)$ satisfying for all $\gamma_1,\gamma_2\in \Gamma$, 
$$c_{\alpha,\beta}([\gamma_1|\gamma_2]) = 
- \alpha(\gamma_1)\beta(\gamma_2).$$
Using $[\gamma_1|\gamma_2]=(1,\gamma_1,\gamma_1\gamma_2)$ and (\ref{eq:nu21}) - (\ref{eq:nu20}), we obtain that $\alpha\cup \beta$ is represented by the homomorphism $\xi_{\alpha,\beta}\in \mathrm{Hom}_{\widehat{\mathbb{Z}}\ps{\Gamma}}(\widehat{\mathbb{Z}}\ps{\Gamma}f_1 \oplus \widehat{\mathbb{Z}}\ps{\Gamma}f_2\oplus \widehat{\mathbb{Z}}\ps{\Gamma}h,\mathbb{Z}/n)$ given by
\begin{eqnarray}
\label{eq:ohyeah1}
\quad\xi_{\alpha,\beta}(f_1\oplus 0 \oplus 0) &=&
-\alpha(\Phi)\beta(\Phi^{-1}) + \alpha(a_1\Phi)\beta(\Phi^{-1}) - \alpha(1)\beta(a_1) + \alpha(\Phi)\beta(a_1)\\ 
\nonumber
&=&\alpha(\Phi)\beta(\Phi)-\alpha(a_1)\beta(\Phi)-\alpha(\Phi)\beta(\Phi)+ \alpha(\Phi)\beta(a_1)\\
\nonumber
&=&-\alpha(a_1)\beta(\Phi)+ \alpha(\Phi)\beta(a_1),\\
\label{eq:ohyeah2}
\quad\xi_{\alpha,\beta}(0\oplus f_2 \oplus 0) &=&
-\alpha(\Phi)\beta(\Phi^{-1}) + \alpha(a_2\Phi)\beta(\Phi^{-1}) - \alpha(1)\beta(a_2) +\alpha(\Phi)\beta(a_2)\\ 
\nonumber
&=&\alpha(\Phi)\beta(\Phi)-\alpha(a_2)\beta(\Phi)-\alpha(\Phi)\beta(\Phi)+ \alpha(\Phi)\beta(a_2)\\
\nonumber
&=&-\alpha(a_2)\beta(\Phi)+ \alpha(\Phi)\beta(a_2),\\
\label{eq:ohyeah3}
\quad\xi_{\alpha,\beta}(0\oplus 0 \oplus h) &=& 
-\alpha(a_2)\beta(a_1) + \alpha(1)\beta(a_1) - \alpha(1)\beta(a_2) + \alpha(a_1)\beta(a_2)\\
\nonumber
&=& -\alpha(a_2)\beta(a_1) + \alpha(a_1)\beta(a_2),
\end{eqnarray}
where the second equalities in (\ref{eq:ohyeah1}) - (\ref{eq:ohyeah3}) follow since $\alpha$ and $\beta$ are group homomorphisms. Therefore we obtain (\ref{eq:Zn}).

Suppose next that $\mathcal{F}_n=\mu_n$. As above, we use the natural isomorphism $\mathbb{Z}/n\otimes_{\mathbb{Z}} \tilde{\mu}_n = \tilde{\mu}_n$ given by exponentiation. We have that $T_\ell(C)$ acts trivially on $\tilde{\mu}_n$ and that $\Phi$ acts on $\tilde{\mu}_n$ as the $q^{\mathrm{th}}$ power map. Therefore, we have, by (\ref{eq:cupbar}), that $\alpha \cup \beta\in \rH^2(C,\mu_n)$ is represented by the two-cocycle $c_{\alpha,\beta}\in C^2(\Gamma,\tilde{\mu}_n)$ satisfying for all $\gamma_1,\gamma_2\in \Gamma$, 
$$c_{\alpha,\beta}([\gamma_1|\gamma_2]) = 
\left(\gamma_1 c_b(\gamma_2)\right)^{- \alpha(\gamma_1)}.$$
Using $[\gamma_1|\gamma_2]=(1,\gamma_1,\gamma_1\gamma_2)$ and (\ref{eq:nu21}) - (\ref{eq:nu20}), we obtain that $\alpha\cup \beta$ is represented by the homomorphism $\xi_{\alpha,\beta}\in \mathrm{Hom}_{\widehat{\mathbb{Z}}\ps{\Gamma}}(\widehat{\mathbb{Z}}\ps{\Gamma}f_1 \oplus \widehat{\mathbb{Z}}\ps{\Gamma}f_2\oplus \widehat{\mathbb{Z}}\ps{\Gamma}h,\tilde{\mu}_n)$ given by
\begin{eqnarray}
\label{eq:ohyeah1mu}
\quad\quad\xi_{\alpha,\beta}(f_1\oplus 0 \oplus 0) &=&
(\Phi c_b(\Phi^{-1}))^{-\alpha(\Phi)} \cdot (a_1\Phi c_b(\Phi^{-1}))^{\alpha(a_1\Phi)} \cdot (c_b(a_1))^{- \alpha(1)} \cdot (\Phi c_b(a_1))^{\alpha(\Phi)}\\ 
\nonumber
&=&(\Phi c_b(\Phi^{-1}))^{-\alpha(\Phi)} \cdot (\Phi c_b(\Phi^{-1}))^{\alpha(a_1)} \cdot (\Phi c_b(\Phi^{-1}))^{\alpha(\Phi)} \cdot (\Phi c_b(a_1))^{\alpha(\Phi)}\\
\nonumber
&=&c_b(\Phi^{-1})^{q\,\alpha(a_1)}\cdot c_b(a_1)^{q\,\alpha(\Phi)},\\
\label{eq:ohyeah2mu}
\quad\xi_{\alpha,\beta}(0\oplus f_2 \oplus 0) &=&
(\Phi c_b(\Phi^{-1}))^{-\alpha(\Phi)} \cdot (a_2\Phi c_b(\Phi^{-1}))^{\alpha(a_2\Phi)} \cdot (c_b(a_2))^{- \alpha(1)} \cdot (\Phi c_b(a_2))^{\alpha(\Phi)}\\ 
\nonumber
&=&(\Phi c_b(\Phi^{-1}))^{-\alpha(\Phi)} \cdot (\Phi c_b(\Phi^{-1}))^{\alpha(a_2)} \cdot (\Phi c_b(\Phi^{-1}))^{\alpha(\Phi)} \cdot (\Phi c_b(a_2))^{\alpha(\Phi)}\\
\nonumber
&=&c_b(\Phi^{-1})^{q\,\alpha(a_2)}\cdot c_b(a_2)^{q\,\alpha(\Phi)},\\
\label{eq:ohyeah3mu}
\quad\xi_{\alpha,\beta}(0\oplus 0 \oplus h) &=& (a_2 c_b(a_1))^{-\alpha(a_2)} \cdot (c_b(a_1))^{\alpha(1)} \cdot (c_b(a_2))^{- \alpha(1)} \cdot (a_1 c_b(a_2))^{\alpha(a_1)}\\
\nonumber
&=& c_b(a_1)^{- \alpha(a_2)} \cdot c_b(a_2)^{\alpha(a_1)}, 
\end{eqnarray}
where the second equalities in (\ref{eq:ohyeah1mu}) - (\ref{eq:ohyeah3mu}) follow since $\alpha$ is a group homomorphism and since $a_1$ and $a_2$ act trivially on $\tilde{\mu}_n$. Therefore we obtain (\ref{eq:mun}).
\end{proof}

\begin{proof}[Proof of Theorem $\ref{thm:WeilCup}$]
Let $\alpha\in\rH^1(C,\mathbb{Z}/n)_O$ and let $\beta\in \rH^1(C,\mathcal{F}_n)_O$.
We view $\alpha\in \mathrm{Hom}(\Gamma,\mathbb{Z}/n)$. By Lemma \ref{lem:normalizedcharacter}(i), we have $\alpha(\Phi)=\alpha(\Phi^{-1})=0$. 

Suppose first that $\mathcal{F}_n=\mathbb{Z}/n$. In this case, we use again Lemma \ref{lem:normalizedcharacter}(i) to view $\beta\in \mathrm{Hom}(\Gamma,\mathbb{Z}/n)$ with $\beta(\Phi)=\beta(\Phi^{-1})=0$. By Lemma \ref{lem:cupelliptic}, $\alpha \cup \beta\in \rH^2(C,\mathbb{Z}/n)$ is represented by the homomorphism $\xi_{\alpha,\beta}\in \mathrm{Hom}_{\widehat{\mathbb{Z}}\ps{\Gamma}}(\widehat{\mathbb{Z}}\ps{\Gamma}f_1 \oplus \widehat{\mathbb{Z}}\ps{\Gamma}f_2\oplus \widehat{\mathbb{Z}}\ps{\Gamma}h,\mathbb{Z}/n)$ given by (\ref{eq:Zn}). Since $\alpha(\Phi)=\beta(\Phi^{-1})=0$ and since $\overline{\alpha}$ and $\overline{\beta}$ are the restrictions of $\alpha$ and $\beta$, respectively, to $T_\ell(C)$, we obtain
\begin{eqnarray*}
\quad\xi_{\alpha,\beta}(f_1\oplus 0 \oplus 0) &=&0,\\
\quad\xi_{\alpha,\beta}(0\oplus f_2 \oplus 0) &=&0,\\
\quad\xi_{\alpha,\beta}(0\oplus 0 \oplus h) &=& \overline{\alpha}(a_1)\overline{\beta}(a_2) - \overline{\alpha}(a_2)\overline{\beta}(a_1).
\end{eqnarray*}
Therefore, (\ref{eq:Weil1}) implies that if $\overline{\alpha}\cup_{\overline{C}}\overline{\beta}$ is trivial in $\rH^1(\overline{C},\mathbb{Z}/n)$ then $\xi_{\alpha,\beta}$ is the trivial homomorphism. This implies Theorem \ref{thm:WeilCup} when $\mathcal{F}_n=\mathbb{Z}/n$.

Suppose next that $\mathcal{F}_n=\mu_n$. By Lemma \ref{lem:normalizedcharacter}(ii), there exists an element $b\in D(C)$ that is normalized at $O$ and an $n^{\mathrm{th}}$ root $b^{1/n}$ of $b$ in the completion $k(C)_O$ at $O$ such that $\beta$ is represented by the one-cocycle $c_{b}\in C^1(\Gamma,\tilde{\mu}_n)$ given by $c_b(\gamma)= \gamma(b^{1/n})/b^{1/n}$ for all $\gamma\in \Gamma$. Moreover, $c_b(\Phi)=c_b(\Phi^{-1})=1$. Since $T_\ell(C)$ acts trivially on $\tilde{\mu}_n$, it follows that the restriction of $c_b$ to $T_\ell(C)$ defines a homomorphism $T_\ell(C)\to \tilde{\mu}_n$ that is independent of the choice of $n^{\mathrm{th}}$ root of $b$ and that is therefore equal to the restriction $\overline{\beta}$ of $\beta$ to $\rH^1(\overline{C},\mu_n)=\mathrm{Hom}(T_\ell(C),\tilde{\mu}_n)$. By Lemma \ref{lem:cupelliptic}, $\alpha \cup \beta\in \rH^2(C,\mu_n)$ is represented by the homomorphism $\xi_{\alpha,\beta}\in \mathrm{Hom}_{\widehat{\mathbb{Z}}\ps{\Gamma}}(\widehat{\mathbb{Z}}\ps{\Gamma}f_1 \oplus \widehat{\mathbb{Z}}\ps{\Gamma}f_2\oplus \widehat{\mathbb{Z}}\ps{\Gamma}h,\tilde{\mu}_n)$ given by (\ref{eq:mun}). Since $\alpha(\Phi)=0$ and $c_b(\Phi^{-1})=1$, and since $T_\ell(C)$ acts trivially on $\tilde{\mu}_n$, we obtain
\begin{eqnarray*}
\quad\xi_{\alpha,\beta}(f_1\oplus 0 \oplus 0) &=&1,\\
\quad\xi_{\alpha,\beta}(0\oplus f_2 \oplus 0) &=&1,\\
\quad\xi_{\alpha,\beta}(0\oplus 0 \oplus h) &=& \overline{\beta}(a_2)^{\overline{\alpha}(a_1)} \cdot \overline{\beta}(a_1)^{- \overline{\alpha}(a_2)}.
\end{eqnarray*}
Therefore, (\ref{eq:Weil2}) implies that if $\overline{\alpha}\cup_{\overline{C}}\overline{\beta}$ is trivial in $\rH^1(\overline{C},\mu_n)$ then $\xi_{\alpha,\beta}$ is the trivial homomorphism. This implies Theorem \ref{thm:WeilCup} when $\mathcal{F}_n=\mu_n$.
\end{proof}

\begin{rem}
\label{rem:Legendre}
Lemma \ref{lem:cupelliptic} can also be used to get different presentations for the cup product when one of the arguments lies in $\rH^1(k,\mathbb{Z}/n)$ or $\rH^1(k,\mu_n)$, respectively.
\begin{enumerate}
\item[(i)] 
Suppose $\alpha\in \rH^1(k,\mathbb{Z}/n)$ and $\beta\in \rH^1(C,\mu_n)$. Then $\alpha(a_1)=\alpha(a_2)=0$. Therefore, the homomorphism $\xi_{\alpha,\beta}\in \mathrm{Hom}_{\widehat{\mathbb{Z}}\ps{\Gamma}}(\widehat{\mathbb{Z}}\ps{\Gamma}f_1 \oplus \widehat{\mathbb{Z}}\ps{\Gamma}f_2\oplus \widehat{\mathbb{Z}}\ps{\Gamma}h,\tilde{\mu}_n)$ given by (\ref{eq:mun}) satisfies
\begin{eqnarray*}
\xi_{\alpha,\beta}(f_1\oplus 0 \oplus 0) &=&\overline{\beta}(a_1)^{q\,\alpha(\Phi)},\\
\xi_{\alpha,\beta}(0\oplus f_2 \oplus 0) &=&\overline{\beta}(a_2)^{q\,\alpha(\Phi)},\\
\xi_{\alpha,\beta}(0\oplus 0 \oplus h) &=&1.
\end{eqnarray*}
Here the first two equalities follow, since the restriction of $c_b$ to $T_\ell(C)$ defines a homomorphism $T_\ell(C)\to \tilde{\mu}_n$ that is independent of the choice of $n^{\mathrm{th}}$ root of $b$ and that is therefore equal to the restriction $\overline{\beta}$ of $\beta$ to $\rH^1(\overline{C},\mu_n)=\mathrm{Hom}(T_\ell(C),\tilde{\mu}_n)$.

\item[(ii)]
Suppose $\alpha\in \rH^1(C,\mathbb{Z}/n)$ and $\beta\in \rH^1(k,\mu_n)$. Then $\beta$ is represented by a one-cocycle $c_b$ associated to an element $b\in k$, which means that any choice of $n^{\mathrm{th}}$ root of $b$ in $\overline{k(C)}$ lies in $\overline{k}$. Hence $c_b(a_1)=c_b(a_2)=1$. Therefore, the homomorphism $\xi_{\alpha,\beta}\in \mathrm{Hom}_{\widehat{\mathbb{Z}}\ps{\Gamma}}(\widehat{\mathbb{Z}}\ps{\Gamma}f_1 \oplus \widehat{\mathbb{Z}}\ps{\Gamma}f_2\oplus \widehat{\mathbb{Z}}\ps{\Gamma}h,\tilde{\mu}_n)$ given by (\ref{eq:mun}) satisfies
\begin{eqnarray*}
\xi_{\alpha,\beta}(f_1\oplus 0 \oplus 0) &=&c_b(\Phi^{-1})^{q\,\overline{\alpha}(a_1)},\\
\xi_{\alpha,\beta}(0\oplus f_2 \oplus 0) &=&c_b(\Phi^{-1})^{q\,\overline{\alpha}(a_2)},\\
\xi_{\alpha,\beta}(0\oplus 0 \oplus h) &=&1.
\end{eqnarray*}
Here the first two equalities follow, since the restriction of $\alpha$ to $T_\ell(C)$ is equal to $\overline{\alpha}$.
\end{enumerate}
In both cases (i) and (ii), $\xi_{\alpha,\beta}$ induces a homomorphism in $\mathrm{Hom}_{\widehat{\mathbb{Z}}\ps{\Gamma}}(\widehat{\mathbb{Z}}\ps{\Gamma}f_1 \oplus \widehat{\mathbb{Z}}\ps{\Gamma}f_2,\tilde{\mu}_n)$. Since $T_\ell(C)$ acts trivially on $\tilde{\mu}_n$, it follows that the class of $\xi_{\alpha,\beta}$ in the quotient (\ref{eq:isocupell}) defines an element in the quotient group
$$\frac{\mathrm{Hom}_{\widehat{\mathbb{Z}}\ps{T_\ell(C)}}(\widehat{\mathbb{Z}}\ps{T_\ell(C)}f_1 \oplus \widehat{\mathbb{Z}}\ps{T_\ell(C)}f_2,\tilde{\mu}_n)}{(\Phi-1)\mathrm{Hom}_{\widehat{\mathbb{Z}}\ps{T_\ell(C)}}(\widehat{\mathbb{Z}}\ps{T_\ell(C)}f_1 \oplus \widehat{\mathbb{Z}}\ps{T_\ell(C)}f_2,\tilde{\mu}_n)} = \frac{\rH^1(\overline{C},\mu_n)}{(\Phi-1)\rH^1(\overline{C},\mu_n)} $$
which equals $\rH^1(\mathrm{Gal}(\overline{k}/k),\rH^1(\overline{C},\mu_n))$. On the other hand, at least one of $\alpha$ or $\beta$ has trivial restriction to $\overline{C}$, so $\alpha \cup \beta$ restricts to $0$ in $\rH^2(\overline{C},\mu_n)$.  Thus the above expression for $\xi_{\alpha,\beta}$ is consistent with the split exact sequence
\begin{equation}
\label{eq:spliteq}
0\to \rH^1(\mathrm{Gal}(\overline{k}/k),\rH^1(\overline{C},\mu_n)) \to \rH^2(C,\mu_n) \to \rH^0(\mathrm{Gal}(\overline{k}/k),\rH^2(\overline{C},\mu_n)) \to 0
\end{equation}
that results from the degeneration of the spectral sequence
$$\rH^p(\mathrm{Gal}(\overline{k}/k),\rH^q(\overline{C},\mu_n))
\Rightarrow \rH^{p+q}(C,\mu_n).$$
This gives another approach to computing $\alpha \cup \beta$ than the Legendre transform method in Theorems \ref{thm:legendre1}   and  \ref{thm:legendre2}.
\end{rem}

\begin{rem}
\label{rem:normalstuff}
Since the point $O$ has degree $d(O)$ prime to $\ell \cdot (\ell - 1)$,  there is a canonical splitting of the sequence (\ref{eq:spliteq}) sending $d(O) \in \mathbb{Z}/n = \rH^2(\overline{C},\mu_n)^{\mathrm{Gal}(\overline{k}/k)}$ to the class $[O]$ of $O$ in $\rH^2(C,\mu_n) = \mathrm{Pic}(C)/n\cdot \mathrm{Pic}(C)$.   Theorem \ref{thm:EllThm} shows that if $\alpha\in \rH^1(C,\mathbb{Z}/n)_O$ and $\beta\in \rH^1(C,\mu_n)_O$, then $\alpha \cup \beta$ lies in the image of this splitting.
\end{rem}

\section{An infinite family of genus two examples}
\label{s:genustwo}
\setcounter{equation}{0}

The goal of this section is to construct an infinite family of curves of genus $2$ for which the equivalent conditions of  Theorem \ref{thm:cupsizeresult} do not hold when $n=\ell = 3$.  We introduce the following notation.

\begin{note}
\label{not:genus2}
Let $\zeta \in \mathbb{C}$ be a primitive cube root of unity, and let $F = \mathbb{Q}(\zeta) = \mathbb{Q}(\sqrt{-3})$. 
\begin{itemize}
\item[(a)]
Let $E$ be the elliptic curve over $F$ defined by the affine equation $y^2 = x^3 - 3$, and let $\pi:E \to \mathbb{P}^1_{F}$ be the morphism associated to the inclusion of function fields $F(x) \subset F(E) = F(x)[y]/(y^2 - x^3 + 3)$. 
\item[(b)] 
Let $Y$ be the smooth projective curve with function field $F(E)(x^{1/2})$, and let $\theta:Y  \to E$ be the natural morphism associated to the containment $F(E)\subset F(Y)$.
\item[(c)]
Define $\tilde{y} = y/x^{3/2} \in F(Y)$ and $w=x^{-1}$. Let $E'$ be the elliptic curve over $F$ defined by $\tilde{y}^2 = 1 - 3w^3$, and let $\pi':E' \to \mathbb{P}^1_{F}$ be the morphism associated to the inclusion of function fields $F(x)=F(w) \subset F(E') = F(w)[\tilde{y}]/(\tilde{y}^2 - 1 + 3w^3)$. Moreover, let $\theta':Y  \to E'$ be the natural morphism associated to the containment $F(E')\subset F(Y)$.
\item[(d)] If $N$ is a finite extension of $F$, we define $Y_N = N \otimes_F Y$, $E_N = N \otimes_F E$ and $E'_N = N\otimes_F E'$. 
\end{itemize}
\end{note}

We collect some obvious properties of $Y$, $E$ and $E'$ in the following remark.

\begin{rem}
\label{rem:genus2}
Both $\theta$ and $\theta'$ are covers of degree 2. 
It follows that $F(Y)$ is a biquadratic extension of $F(x)=F(w)$ with intermediate fields $F(E)$, $F(x^{1/2})$ and $F(E')$.  
Moreover, $\theta$ is ramified over the points $Q_1 = (0,\sqrt{-3})$ and $Q_2 = (0,- \sqrt{-3})$ in $(x,y)$ coordinates on $E$, so $Y$ has genus $2$ by the Hurwitz formula.
The point $x = \infty$ on $\mathbb{P}^1_F$ is a branch point of $\pi:E \to \mathbb{P}^1_F$ and splits under $\pi':E' \to \mathbb{P}^1_F$.  So there are two points $0_Y$ and $0'_Y$ of $Y$ over $x =\infty $ on $\mathbb{P}^1_F$.
\end{rem}

\begin{lemma}
\label{lem:picnice}  
Let $N$ be a finite extension of $F$.  The direct image homomorphisms $\theta_*:\mathrm{Pic}^0(Y_N) \to \mathrm{Pic}^0(E_N)$ and $\theta'_*:\mathrm{Pic}^0(Y_N) \to \mathrm{Pic}^0(E'_N)$ give a homomorphism
\begin{equation}
\label{eq:Picnice}
\theta_* \times \theta'_*:\mathrm{Pic}^0(Y_N) \to \mathrm{Pic}^0(E_N) \times \mathrm{Pic}^0(E'_N)
\end{equation}
whose kernel and cokernel are finite groups annihilated by $2$.
\end{lemma}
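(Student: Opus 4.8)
The plan is to exploit the biquadratic structure of $F(Y)$ over $F(x)$. Let $G=\mathrm{Gal}(F(Y)/F(x))$; since $F(Y)$ is the compositum of the three quadratic subextensions $F(E)$, $F(x^{1/2})$, $F(E')$, the group $G$ is a Klein four-group, with order-two subgroups $H_E$, $H_x$, $H_{E'}$ fixing those three fields. Thus $\eta\colon Y\to E$ and $\eta'\colon Y\to E'$ are the quotients of $Y$ by $H_E$ and $H_{E'}$, while the quotient of $Y$ by $H_x$ is $\mathbb{P}^1_F$ because $F(x^{1/2})$ is a rational function field. Writing $N_H=\sum_{h\in H}h\in\mathbb{Z}[G]$, one has the identities $N_{H_E}+N_{H_x}+N_{H_{E'}}=2+N_G$ and $N_G=N_{H_E}\cdot N_{H_x}$ in $\mathbb{Z}[G]$, and $G$ acts compatibly on the Picard groups of $Y$ and of its quotients over any field containing $F$.

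First I would record the two standard facts about a degree-two Galois quotient $f\colon X\to X/H$: one has $f^{*}f_{*}=N_H$ and $f_{*}f^{*}=[2]$ on $\mathrm{Pic}^{0}$. Applying the first to $Y\to\mathbb{P}^1_F$ and using $\mathrm{Pic}^0(\mathbb{P}^1)=0$ gives $N_{H_x}=0$ on $\mathrm{Pic}^0(Y_{\overline{F}})$, hence $N_G=N_{H_E}N_{H_x}=0$ there as well, so the group-ring identity collapses to
\[
\eta^{*}\eta_{*}+{\eta'}^{*}\eta'_{*}=2\cdot\mathrm{id}\qquad\text{on }\mathrm{Pic}^0(Y_{\overline{F}}).
\]
Since an endomorphism of an abelian variety over $F$ is determined by its action on $\overline{F}$-points, this is an identity of endomorphisms of $\mathrm{Jac}(Y)$. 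Set $A=\mathrm{Jac}(E)\times\mathrm{Jac}(E')$, let $\Phi=\eta_{*}\times\eta'_{*}\colon\mathrm{Jac}(Y)\to A$ be the map in the statement, and let $\Psi\colon A\to\mathrm{Jac}(Y)$ be $(u,v)\mapsto\eta^{*}u+{\eta'}^{*}v$; the displayed relation reads $\Psi\circ\Phi=[2]_{\mathrm{Jac}(Y)}$.

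From $\Psi\Phi=[2]$ one gets $\ker\Phi\subseteq\mathrm{Jac}(Y)[2]$, which is finite; since $\dim\mathrm{Jac}(Y)=g(Y)=2=\dim A$, the map $\Phi$ is an isogeny, in particular a surjection. Then from $\Phi\Psi\Phi=\Phi\circ(\Psi\Phi)=\Phi\circ[2]=[2]_A\circ\Phi$ and the surjectivity of $\Phi$ (so that $\Phi\Psi-[2]_A$ vanishes on $A(\overline F)$) I conclude $\Phi\Psi=[2]_A$ as well. Equivalently, the two cross-correspondences $\eta_{*}{\eta'}^{*}$ and $\eta'_{*}\eta^{*}$ vanish, since each factors through $\mathrm{Pic}^0(\mathbb{P}^1)=0$; the epimorphism argument lets one skip that direct verification.

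Finally I would pass to $N$-rational points. Since $Y$, $E$, $E'$ all have $F$-rational points (for instance $0_Y$, $O_E$, and a point of $E'$ over $x=\infty$), the natural maps $\mathrm{Pic}^0(Y_N)\to\mathrm{Jac}(Y)(N)$, $\mathrm{Pic}^0(E_N)\to\mathrm{Jac}(E)(N)$ and $\mathrm{Pic}^0(E'_N)\to\mathrm{Jac}(E')(N)$ are isomorphisms, and $\eta_{*}\times\eta'_{*}$ on Picard groups is the map induced on $N$-points by the isogeny $\Phi$. Its kernel lies in $\mathrm{Jac}(Y)(N)[2]$, hence is finite and annihilated by $2$. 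For the cokernel, $\Phi\Psi=[2]_A$ gives $2\cdot A(N)=\Phi(\Psi(A(N)))\subseteq\mathrm{im}(\Phi)$, so the cokernel is a quotient of $A(N)/2A(N)$; it is therefore annihilated by $2$, and finite because $A(N)$ is finitely generated by the Mordell--Weil theorem. The only step that needs a little care is the passage $\Psi\Phi=[2]\Rightarrow\Phi\Psi=[2]$, equivalently the vanishing of the cross-correspondences; everything else is bookkeeping with norm maps, the vanishing of $\mathrm{Pic}^0(\mathbb{P}^1)$, and the dimension count $g(Y)=2$.
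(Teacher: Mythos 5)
Your proof is correct and rests on the same ingredients as the paper's: the Klein four-group structure of $Y\to\mathbb{P}^1_F$, the vanishing of $\mathrm{Pic}^0(\mathbb{P}^1)$, and the group-ring identity $N_{H_E}+N_{H_x}+N_{H_{E'}}=2+N_G$ (the paper's ``$2+\mathrm{Trace}_G$''). The only organizational difference is that you deduce the cokernel statement from $\Psi\Phi=[2]$ via the isogeny/dimension argument, whereas the paper gets $\Phi\Psi=[2]$ directly from the vanishing of the cross-correspondences $\eta_*\circ\eta'^{*}=\pi^*\circ\pi'_*=0$ --- a route you also note as an alternative.
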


\begin{proof}  
We have pullback maps $\theta^*:\mathrm{Pic}^0(E_N) \to \mathrm{Pic}^0(Y_N)$ and $\theta'^*:\mathrm{Pic}^0(E'_N) \to \mathrm{Pic}^0(Y_N)$ such that $\theta_* \circ \theta^*$ and $\theta'_* \circ \theta'^*$ are multiplication by $2$.  Furthermore, $\theta'_* \circ \theta^* = \pi'^* \circ \pi_*$ and $\theta_* \circ \theta'^* = \pi^* \circ \pi'_*$ are trivial since $\mathrm{Pic}^0(\mathbb{P}^1_{N})$ is trivial. Hence the composition
$$(\theta_* \times \theta'_* )\circ (\theta^* \times \theta'^*): \mathrm{Pic}^0(E_N) \times \mathrm{Pic}^0(E'_N) \to \mathrm{Pic}^0(E_N) \times \mathrm{Pic}^0(E'_N)$$
is multiplication by $2$.  Since all the groups appearing in (\ref{eq:Picnice}) are finitely generated abelian groups, it will suffice to show that the kernel $\mathcal{K}$  of $\theta_* \times \theta'_*$ is annihilated by $2$.  View $Y_N$ as a Galois cover of $\mathbb{P}^1_N$ with Galois group $G$ a Klein four group.  The three intermediate quadratic covers are $E_N$, $E'_N$ and the projective line over $N$ with function field $N(x^{1/2})$.  Since the latter projective line has trivial Jacobian, we see $\mathcal{K}$ is annihilated by the group ring element $1 + \sigma$ for each non-trivial $\sigma \in G$.  The sum of these three group ring elements is $2 + \mathrm{Trace}_G$.  Hence $2$ annihilates $\mathcal{K}$ because the action of $\mathrm{Trace}_G$ on $\mathrm{Pic}^0(Y_N)$ factors through $\mathrm{Pic}^0(\mathbb{P}^1_N) = 0$.
\end{proof}

The following lemma is clear from the functorial properties of Jacobians. 

\begin{lemma}
\label{cor:ecor}  
Let $X$ be a geometrically integral curve of genus $1$ over a perfect field $K$, and suppose $\tilde{0}_X$ is an arbitrary point of $X(K)$.  Then $X$ becomes an elliptic curve with origin $\tilde{0}_X$ via the morphism from $X$ to its Jacobian sending $\tilde{0}_X$ to the origin.  
\begin{enumerate}
\item[(i)]  
Suppose $ 1 \le n \in \mathbb{Z}$. The $n$-torsion of $\mathrm{Pic}^0(X)$ has order $n^2$ if and only  if the set of elements $P \in X(K)$ such that $nP = \tilde{0}_X$ with respect to the group law on $X$ has order $n^2$.
\item[(ii)]  Suppose $P_1, P_2 \in X(K)$.   There is a class $\mathfrak{d} \in \mathrm{Pic}^0(X)$ such that $n\mathfrak{d} = [P_1] - [P_2]$ in $\mathrm{Pic}^0(X)$ if and only if there is a point $P \in X(K)$ such that $nP = P_1 - P_2$ with respect to the group law on $X$.
\end{enumerate}
\end{lemma}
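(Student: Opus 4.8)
The plan is to deduce both statements from the Abel--Jacobi isomorphism attached to the rational point $\tilde 0_X$. First I would record that, because $X(K)$ contains $\tilde 0_X$, the degree zero part $\mathrm{Pic}^0(X)$ of the Picard group coincides with $\mathrm{Jac}(X)(K)$, and the Abel--Jacobi map
$$j\colon X(K)\longrightarrow \mathrm{Pic}^0(X),\qquad P\longmapsto [P]-[\tilde 0_X],$$
is a bijection. Injectivity holds because on a genus one curve a linear equivalence $[P]=[Q]$ with $P\neq Q$ would produce a degree one morphism $X\to\mathbb{P}^1_K$, forcing $g(X)=0$; surjectivity follows from Riemann--Roch, since for a divisor $D$ of degree zero the Riemann--Roch space $L(D+[\tilde 0_X])$ is one dimensional, so $D+[\tilde 0_X]$ is linearly equivalent to an effective divisor of degree one, necessarily $[P]$ for some $P\in X(K)$. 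By definition the elliptic curve structure on $X$ with origin $\tilde 0_X$ is the one transported from $\mathrm{Pic}^0(X)$ through $j$, so $j$ is in fact an isomorphism of abelian groups (and, since the Jacobian of a genus one curve with a rational point is canonically that curve again, an isomorphism of $K$-varieties $X\xrightarrow{\;\sim\;}\mathrm{Jac}(X)$).

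Granting this, part (i) is immediate: from $j(nP)=n\,j(P)$ and $j(\tilde 0_X)=0$ it follows that $j$ restricts to a bijection between $\{P\in X(K):nP=\tilde 0_X\}$ and $\mathrm{Pic}^0(X)[n]$, so the two sets have the same cardinality, and one has order $n^2$ exactly when the other does.

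For part (ii) I would rewrite $[P_1]-[P_2]=j(P_1)-j(P_2)=j(P_1-P_2)$, the last subtraction taken in the group law on $X(K)$. If some $D\in\mathrm{Pic}^0(X)$ satisfies $nD=[P_1]-[P_2]$, then surjectivity of $j$ gives $D=j(P)$ with $P\in X(K)$, and injectivity of $j$ applied to $n\,j(P)=j(P_1-P_2)$ yields $nP=P_1-P_2$; conversely, for such a $P$ the class $D=j(P)$ works. This proves the equivalence.

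The only point that needs care — more a bookkeeping matter than a genuine obstacle — is the very first identification, namely that $\mathrm{Pic}^0(X)$, the group of degree zero divisor classes over $K$, equals $\mathrm{Jac}(X)(K)$ and, via $j$, equals $X(K)$; this is precisely where the hypotheses that $X$ is geometrically integral of genus one and carries a $K$-rational point (and, for the Galois descent underlying $\mathrm{Pic}^0(X)=\mathrm{Jac}(X)(K)$, that $K$ is perfect) are used. Once $j$ is in place, both assertions are purely formal consequences of its being a group isomorphism, which is why the lemma can fairly be called clear from the functorial properties of Jacobians.
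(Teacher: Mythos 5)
Your argument is correct and is exactly the justification the paper has in mind: the paper offers no written proof (it simply declares the lemma ``clear from the functorial properties of Jacobians''), and the content behind that remark is precisely your observation that the Abel--Jacobi map $P\mapsto [P]-[\tilde 0_X]$ is a group isomorphism $X(K)\xrightarrow{\sim}\mathrm{Pic}^0(X)$, from which (i) and (ii) follow formally. Your write-up supplies the Riemann--Roch and injectivity details the paper leaves implicit, but the route is the same.
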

 
We now return to our curves $E$, $E'$ and $Y$ from Notation \ref{not:genus2}. 
 
\begin{lemma}
\label{lem:deffields}
Let $N$ be a finite extension of $F$.
\begin{enumerate}
\item[(i)]  The $3$-torsion of $\mathrm{Pic}^0(E_N)$ has order $9$ if and only if $F(12^{1/3}) \subset N$.
\item[(ii)]  The $3$-torsion of $\mathrm{Pic}^0(E'_N)$ has order $9$ if and only if $F((4/3)^{1/3}) \subset N$.
\item[(iii)]  Let  $0_Y$ and $0'_Y$ be the two points of $Y$ defined in Remark $\ref{rem:genus2}$. Then $\{0_Y,0'_Y\}$ is the inverse image under $\theta:Y \to E$ of the point $0_E$ at infinity on the curve $E:y^2 = x^3 - 3$.  We can label these points so that under $\theta':Y \to E'$ one has $\theta'(0_Y) = (0,1) = P_1$ and $\theta'(0'_Y) = (0,-1) = P_2$ relative to the $(w,\tilde{y})$ coordinates of the curve $E':\tilde{y}^2 = 1 - 3w^3$.  The divisor class $[P_1] - [P_2]$ in $\mathrm{Pic}^0(E'_N)$ lies in $3 \cdot \mathrm{Pic}^0(E'_N)$ if and only if $F((4/3)^{1/3},\zeta^{1/3}) \subset N$.
 \end{enumerate}
 \end{lemma}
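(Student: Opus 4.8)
The plan is to reduce each part to a statement about rational points on the relevant elliptic curve and then compute explicitly, using Lemma \ref{cor:ecor} as the bridge and the fact that $\sqrt{-3}\in F$ (so $\mu_3\subset F$). For (i) and (ii) I would invoke Lemma \ref{cor:ecor}(i) with $n=3$: $\mathrm{Pic}^0(E_N)$ has $3$-torsion of order $9$ iff $E[3]\subseteq E(N)$, and likewise for $E'$. So the whole content is to list the $3$-torsion. For $E\colon y^2=x^3-3$ the $3$-division polynomial is $3x(x^3-12)$, so the nontrivial $3$-torsion has $x$-coordinates $0$ (with $y=\pm\sqrt{-3}\in F$) and the three cube roots of $12$ (with $y=\pm 3\in F$); hence $E[3]\subseteq E(N)$ iff $12^{1/3}\in N$, which is (i). For (ii) I first put $E'$ in Weierstrass form: the $F$-rational substitution $u=w$, $v=\tilde y/\sqrt{-3}$ turns $\tilde y^2=1-3w^3$ into $v^2=u^3-1/3$ and carries the point at infinity to the origin; its $3$-division polynomial is $3u(u^3-4/3)$, so the nontrivial $3$-torsion has $u$-coordinates $0$ (with $v=\pm\sqrt{-1/3}=\pm\sqrt{-3}/3\in F$) and the three cube roots of $4/3$ (with $v=\pm1\in F$), giving (ii). Note that in this model $\{P_1,P_2\}=\{(0,a),(0,-a)\}$ with $a=\sqrt{-3}/3$, i.e.\ $P_1,P_2$ are exactly the $u=0$ members of $E'[3]$, and they are opposite points: $P_2=-P_1$.

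For (iii) I would begin with Lemma \ref{cor:ecor}(ii): the class $[P_1]-[P_2]$ lies in $3\cdot\mathrm{Pic}^0(E'_N)$ iff there is $P\in E'(N)$ with $3P=P_1-P_2$ in the group law; since $P_2=-P_1$ and $P_1$ has order $3$, this reads $3P=2P_1=-P_1$, i.e.\ $P_1\in 3E'(N)$. Next I exploit the complex multiplication. Writing $E'\colon v^2=u^3+a^2$ with $a=\sqrt{-3}/3\in F$, the order-$3$ automorphism $\iota_\zeta\colon(u,v)\mapsto(\zeta u,v)$ is defined over $F$ and realizes $\mathbb{Z}[\zeta]\subseteq\mathrm{End}_F(E')$, with $P_1$ a generator of $\ker(1-\zeta)=E'[1-\zeta]$. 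Using $(1-\zeta)^2=-3\zeta$ and $\iota_\zeta(P_1)=P_1$, the condition $3P=-P_1$ becomes $(1-\zeta)^2P=P_1$. Setting $R=(1-\zeta)P$ one needs $(1-\zeta)R=P_1\neq 0$, so $R$ is a $3$-torsion point not in $E'[1-\zeta]$; by the computation in (ii) such an $R$ has $u$-coordinate a cube root of $4/3$, and $(1-\zeta)$ maps $E'[3]$ onto $E'[1-\zeta]$, so a valid $R$ exists but is rational only over fields containing $K_0:=F((4/3)^{1/3})$. Hence if $P\in E'(N)$ then $R=(1-\zeta)P\in E'(N)$ forces $K_0\subseteq N$.

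Assuming now $K_0\subseteq N$ (so $E'[3]\subseteq E'(K_0)\subseteq E'(N)$ by (ii), and $R$ is available over $K_0$), I would solve $(1-\zeta)P=R$ explicitly. Writing $(1-\zeta)P=(u_P,v_P)+(\zeta u_P,-v_P)$ and imposing that the $u$-coordinate of the sum equals $u_0$ (a cube root of $4/3$), the addition law yields the cubic $u_P^3+3\zeta u_0u_P^2+4a^2=0$; since $4a^2=-4/3=-u_0^3$, the substitution $u_P=u_0s$ reduces it to $h(s):=s^3+3\zeta s^2-1=0$. One checks $\mathrm{disc}(h)=81$, and a Cardano-type computation (the relevant cube roots being those of $-\tfrac12\pm\tfrac{\sqrt{-3}}{2}=\zeta,\zeta^2$) shows that the splitting field of $h$ over $K_0$ is $K_0(\zeta^{1/3})$, a primitive $9$th root of unity, which has degree $3$ over $K_0$ because $K_0/F$ is non-abelian while $F(\zeta^{1/3})/F$ is abelian, so $\zeta^{1/3}\notin K_0$. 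Finally, since $E'[1-\zeta]\subseteq E'(F)$ with trivial Galois action, $\sigma\mapsto\sigma P-P$ is a homomorphism $\mathrm{Gal}(\overline{K_0}/K_0)\to E'[1-\zeta]\cong\mathbb{Z}/3$, so $K_0(P)/K_0$ is cyclic of degree dividing $3$; as $u_P$ already generates $K_0(\zeta^{1/3})$, we get $K_0(P)=K_0(\zeta^{1/3})$, and in particular $v_P\in K_0(\zeta^{1/3})$. Combining the two stages, $P_1\in 3E'(N)$ iff $K_0\subseteq N$ and $K_0(\zeta^{1/3})\subseteq N$, i.e.\ $F((4/3)^{1/3},\zeta^{1/3})\subseteq N$, which is (iii).

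The main obstacle is the second stage of (iii): one must cleanly separate the contributions of the two successive $(1-\zeta)$-divisibilities so that the first produces exactly $(4/3)^{1/3}$ and the second exactly $\zeta^{1/3}$, carry out the addition-law computation that produces $h(s)$, and then verify via the cyclicity of $K_0(P)/K_0$ both that no extra (e.g.\ quadratic) extension is needed for the $v$-coordinate and that $\zeta^{1/3}$ is genuinely not already contained in $K_0$. Parts (i) and (ii) are routine $3$-division-polynomial calculations once $E'$ has been put in Weierstrass form.
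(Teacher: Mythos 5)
Your proposal follows essentially the same route as the paper: parts (i) and (ii) come down to computing $E[3]$ and $E'[3]$ explicitly (you use $3$-division polynomials where the paper runs the collinearity/CM computation by hand, but the resulting points and fields are identical), and part (iii) reduces via Lemma \ref{cor:ecor}(ii) to deciding when $P_1 \in 3E'(N)$, which both you and the paper resolve by factoring multiplication by $3$ as a unit times $(1-\zeta)^2$ and performing two successive $(1-\zeta)$-divisions --- the first forcing $(4/3)^{1/3}$, the second producing a cubic whose Cardano solution forces $\zeta^{1/3}$. Your normalization $u_P = u_0 s$ reducing to $h(s)=s^3+3\zeta s^2-1$ with coefficients in $F$, and your cocycle argument $\sigma\mapsto\sigma P-P\in E'[1-\zeta]$ handling the $v$-coordinate, are mild streamlinings of the paper's direct Cardano computation and its explicit slope relation $\tilde y/w=(1-\zeta)\tilde y_0/(\zeta^2w+2w_0)$.

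One justification in your write-up is wrong as stated: you claim $\zeta^{1/3}\notin K_0$ ``because $K_0/F$ is non-abelian while $F(\zeta^{1/3})/F$ is abelian.'' Since $\mu_3\subset F=\mathbb{Q}(\zeta)$, the extension $K_0=F((4/3)^{1/3})$ is a cyclic Kummer extension of $F$ of degree $3$, hence abelian over $F$; the dichotomy you invoke does not exist at the level of $F$. The conclusion is nevertheless correct and the intended argument works one level down: $K_0/\mathbb{Q}$ is Galois of degree $6$ with group $S_3$, whereas $F(\zeta^{1/3})=\mathbb{Q}(\zeta_9)$ is abelian over $\mathbb{Q}$, so the two degree-$6$ fields cannot coincide and $\zeta^{1/3}\notin K_0$. (Alternatively, by Kummer theory over $F$ one checks that $\zeta$ and $4/3$ generate distinct subgroups of $F^*/(F^*)^3$.) With that repair the argument is complete.
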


\begin{proof} 
The complex multiplication of $\zeta \in F$ on $E:y^2 = x^3 - 3$ is defined by $\zeta\cdot (x,y) = (\zeta x, y)$, while $-(x,y) = (x, -y)$ in the group law of $E$.  Thus $(x,y) = (0,\sqrt{-3})$ and $(x,y) = (0,-\sqrt{-3})$ are fixed by $\zeta$ and therefore sent to $0$ in the group law of $E$ by $(\zeta - 1)$.  Since $(\zeta^2 - 1)(\zeta - 1) = 3$ in $F$, these points are $3$-torsion points of $E$ over $\mathbb{Q}(\sqrt{-3}) = F$.  To find the remaining $3$-torsion points over $\overline{F}$, we just need a point $(x,y) \in E(\overline{F})$ such that $(\zeta^2 - 1) (x,y) = (0,\sqrt{-3})$. Here $\zeta^2 (x,y) = (\zeta^2x,y)$, $-(x,y) = (x,-y)$ and $-(0,\sqrt{-3}) = (0, -\sqrt{-3})$.  So we are looking for a line of the form $cx + dy - e = 0$ containing three points of the form $(\zeta^2 x,y)$, $(x,-y)$ and $(0,-\sqrt{-3})$. Furthermore, one should not have $x = 0$, since such a solution leads to the previous points $(0,\sqrt{-3})$ and $(0,\sqrt{-3})$. Writing out the constraints on $c$, $d$ and $e$ we find $d(-\sqrt{-3}) = e$ and that there is a matrix equation
$$\left(\begin{array}{cr} \zeta^2 x & y + \sqrt{-3}\\ x& -y + \sqrt{-3}\end{array}\right)\cdot  \begin{pmatrix} c \\ d\end{pmatrix} =  \begin{pmatrix} 0 \\ 0\end{pmatrix}.  $$
Since $c$ and $d$ are not both $0$ we find from this that
$$0 = \mathrm{det} \left(\begin{array}{cr} \zeta^2 x & y + \sqrt{-3}\\ x& -y + \sqrt{-3}\end{array}\right) = y(-\zeta^2 x -x) + \sqrt{-3}(\zeta^2 -1)x .$$
Since $x \ne 0$ and $\zeta^2 + 1 = -\zeta$ we can divide by $x$ to have
$$y = -\zeta^2 \sqrt{-3}(\zeta^2 -1).$$ 
On squaring we get
$$ x^3 - 3 = y^2 = -3 \zeta (\zeta^2 - 1)^2 = -3 \zeta (\zeta^4 - 2\zeta^2 + 1) = -3 (\zeta^5 - 2\zeta^3 + \zeta) = -3(\zeta^2 - 2 + \zeta) = 9.$$
Thus 
$$x^3=12 \quad \mathrm{and} \quad y = \pm 3$$
which shows part (i) of Lemma \ref{lem:deffields}.

For part (ii) we proceed similarly using the model $E':\tilde{y}^2 = 1 - 3w^3$, with complex multiplication defined by $\zeta(w,\tilde{y}) = (\zeta w, \tilde{y})$ and $-(w,\tilde{y}) = (w, -\tilde{y})$.  As the origin of $E'$ we will use the point $\tilde{0}_{E'}$ at infinity relative to the above affine $(w,\tilde{y})$ model of $E'$.  Note that because of Lemma \ref{cor:ecor} this does not make a difference as far as part (ii) of Lemma \ref{lem:deffields} is concerned. The points $(w,\tilde{y}) = (0,1)$ and $(0,-1)$ are fixed by $\zeta $ and hence annihilated by $\zeta - 1$, so they are $3$-torsion points.  To find the remaining $3$-torsion points, we look for a $(w_0,\tilde{y}_0)$ with $(\zeta^2 - 1)\cdot (w_0,\tilde{y}_0) = (0,1)$.  Thus we need a line $cw_0 + d\tilde{y}_0 -e = 0$ containing the points $(\zeta^2 w_0,\tilde{y}_0)$, $(w_0,-\tilde{y}_0)$ and $-(0,1) = (0,-1)$.  This implies $-d-e = 0$ so $-e = d$ and
$$\left(\begin{array}{cr} \zeta^2 w_0 & \tilde{y}_0 + 1\\ w_0& -\tilde{y}_0 + 1\end{array}\right)\cdot  \begin{pmatrix} c \\ d\end{pmatrix} =  \begin{pmatrix} 0 \\ 0\end{pmatrix}. $$
Since $c$ and $d$  are not both $0$, this gives
$$0 = \mathrm{det} \left(\begin{array}{cr} \zeta^2 w_0  & \tilde{y}_0 + 1\\ w_0& -\tilde{y}_0 + 1\end{array}\right) = \tilde{y}_0(-\zeta^2 w_0 -w_0) + (\zeta^2 -1)w_0.$$
The solution we are looking for does not have $w_0 = 0$, so we can divide by $w_0$ and then square to find
$$1 - 3w_0^3 = \tilde{y}_0^2 = \left ( \frac{\zeta^2 - 1}{\zeta^2 + 1}\right ) ^2 = \zeta^{-2} (\zeta^4 - 2 \zeta^2 + 1) = (\zeta^2 - 2 + \zeta) = -3.$$
Thus
$$w_0^3 = 4/3 \quad \mathrm{and}\quad \tilde{y}_0 = \pm \left ( \frac{\zeta^2 - 1}{\zeta^2 + 1}\right )   = \mp \sqrt{-3} \in F$$
where we set $\sqrt{-3} = 2\zeta + 1$, which leads to part (ii) of Lemma \ref{lem:deffields}.

Finally, for part (iii), note that we have shown $P_1 = (0,1)$ and $P_2 = (0,-1)$ are $3$-torsion points on $E'$ with $P_2 = -P_1$, so $P_1 - P_2 = 2 P_1$ and $P_1 = 2 \cdot (2P_1)$.  So in view of Lemma \ref{cor:ecor}, it will suffice to determine the extension of $F$ generated by the coordinates of a point $Q$ with $3Q = P_1 = (0,1)$ when the group law on $E'$ is the one coming from the map to the Jacobian that sends the point $\tilde{0}_{E'}$ at infinity on $E'$ to the origin.  We have found above a point $(w_0,\tilde{y}_0)$ with $(\zeta^2 - 1) (w_0,\tilde{y}_0) = (0,1)$.  This point has $w_0^3 = 4/3$ and $\tilde{y}_0$ a particular square root of $-3$ depending on $\zeta$.  So we now look for a point $Q = (w_1,\tilde{y}_1)$ with $(\zeta - 1)(w_1,\tilde{y}_1) = (w_0,\tilde{y}_0)$;   then $3 Q = (\zeta^2 - 1)(\zeta -1)(w_1,\tilde{y}_1) = (0,1)$.  Here $\zeta (w_1,\tilde{y}_1) = (\zeta w_1, \tilde{y}_1)$ and $-(w_1,\tilde{y}_1) = (w_1,-\tilde{y}_1)$ and $-(w_0,\tilde{y}_0) = (w_0, -\tilde{y}_0)$.  Hence we are looking for a line
\begin{equation}
\label{eq:lastline}
c(w - w_0) + d(\tilde{y} + \tilde{y}_0) - e = 0
\end{equation}
containing $(\zeta w_1, \tilde{y}_1)$, $(w_1,-\tilde{y}_1)$ and $(w_0, -\tilde{y}_0)$. Here if $w_1 = w_0$ then $(w_1,\tilde{y}_1) = \pm (w_0,\tilde{y}_0)$ and 
$$(\zeta - 1) (w_1,\tilde{y}_1) = \pm (\zeta - 1) (w_0,\tilde{y}_0) = \mp \zeta (\zeta^2 -1)(w_0,\tilde{y}_0) = \mp \zeta (0,1) =  (0,\mp 1) \ne (w_0,\tilde{y}_0).$$
Thus we can assume $w_1 \ne w_0$, and similarly we can assume $\zeta w_1 \ne w_0$, so the three points   $(\zeta w_1, \tilde{y}_1)$, $(w_1,-\tilde{y}_1)$ and $(w_0, -\tilde{y}_0)$ are distinct.  In order for the point $(w_0,-\tilde{y}_0)$ to be on the line (\ref{eq:lastline}) we must have $-e = 0$.  The remaining two points $(\zeta w_1, \tilde{y}_1)$, $(w_1,-\tilde{y}_1)$ are on the line if and only if
$$\left(\begin{array}{cr} \zeta w_1 - w_0 & \tilde{y}_1 + \tilde{y}_0\\ w_1 - w_0& -\tilde{y}_1 + \tilde{y}_0\end{array}\right)\cdot  \begin{pmatrix} c \\ d\end{pmatrix} =  \begin{pmatrix} 0 \\ 0\end{pmatrix}  .$$
Since $c$ and $d$ are not both $0$, we conclude
\begin{eqnarray*}
0 = \mathrm{det} \left(\begin{array}{cr} \zeta w_1 - w_0 & \tilde{y}_1 + \tilde{y}_0\\ w_1 - w_0& -\tilde{y}_1 + \tilde{y}_0\end{array}\right) &=& (\zeta w_1 - w_0)\cdot (-\tilde{y}_1 + \tilde{y}_0) - ( \tilde{y}_1 + \tilde{y}_0) \cdot (w_1 - w_0) \\
&=& \tilde{y}_1((-\zeta -1)w_1 + 2w_0) + (\zeta - 1)w_1 \tilde{y}_0.
\end{eqnarray*}
Since $-\zeta - 1 = \zeta^2$ we get 
\begin{equation}
\label{eq:nicely}
\tilde{y}_1 (\zeta^2 w_1 + 2w_0) = (1 -\zeta) w_1 \tilde{y}_0
\end{equation}
so on squaring this we find
$$(1 - 3w_1^3)(\zeta^2 w_1 + 2w_0)^2 =   (1 - \zeta)^2 w_1^2 (-3).$$
Writing $(\zeta^2 w_1 + 2w_0)^2  = \zeta^4 w_1^2 + 4\zeta^2 w_0 w_1 + 4 w_0^2$ we end up with an equality
$$-3 (\zeta^4 w_1^5 + 4\zeta^2 w_0 w_1^4 + 4 w_0^2 w_1^3) + (\zeta^4 + 3(1 -\zeta)^2) w_1^2 + 4\zeta^2 w_0 w_1 + 4 w_0^2 = 0.$$
We know $w_1 \ne w_0$ and $w_1 \ne \zeta^{-1} w_0 = \zeta^2 w_0$, so we divide the left hand side by $(w_1 - w_0)(w_1 - \zeta^2 w_0) = w_1^2 -(1 + \zeta^2) w_0 w_1 + \zeta^2 w_0^2 = w_1^2 + \zeta w_0 w_1 + \zeta^2 w_0^2$.  This gives
$$-3\zeta w_1^3 - 9 \zeta^2 w_0 w_1^2 + 4\zeta = 0.$$
Dividing by $-3 \zeta$ we obtain
\begin{equation}
\label{eq:needtheroots}
w_1^3 + 3\zeta w_0 w_1^2 - 4/3 = 0.
\end{equation}
We now write down the roots of (\ref{eq:needtheroots}) using Cardano's formulas. Define
$$a = 3 \zeta w_0, \quad b = 0, \quad c = -4/3,$$
and write 
\begin{eqnarray*}
r &=& \frac{1}{3}(3b - a^2) =\frac{1}{3} (-9 \zeta^2 w_0^2) = -3 \zeta^2 w_0^2= -3 \zeta^2 (4/3)^{2/3} ,
\\
s &=& \frac{1}{27}(2 a^3 - 9ab + 27 c) = 2 w_0^3 - 4/3 = 2 (4/3) - 4/3 = 4/3,
\\
D &=& - 4 r^3 - 27 s^2  = -4 (-27 w_0^6) - 27 (4/3)^2 = 4 \cdot 27 \cdot (4/3)^2  - 27 \cdot (4/3)^2  = 4^2 3^2,
\\
A^3 &=& \frac{-27 s}{2} + \frac{3}{2} \sqrt{-3D} = - \frac{27}{2} \cdot \frac{4}{3} + \frac{3}{2} \sqrt{-3^3 4^2} =  36 \zeta,
\\
B^3 &=& \frac{-27 s}{2} - \frac{3}{2} \sqrt{-3D} = 36 \zeta^2.
\end{eqnarray*}
The roots $w_1$ of (\ref{eq:needtheroots}) are then 
$$ \frac{A+B}{3};\quad \frac{\zeta^2 A + \zeta B}{3};\quad \frac{\zeta A + \zeta^2 B}{3}.$$
We have from (\ref{eq:nicely}) that
$$\tilde{y}_1/w_1 = (1- \zeta) \tilde{y}_0 /(\zeta^2 w_1 + 2w_0).$$
Since $\tilde{y}_0 = \pm \sqrt{-3} \in F$, we conclude that $w_1$ and $\tilde{y}_1$ generate the same extension over $F$ as $w_1$ and $w_0$ do.  So the extension of $F$ generated by $w_1$ and $\tilde{y}_1$ is $ F((4/3)^{1/3},\zeta^{1/3})$. This completes the proof of part (iii) of Lemma \ref{lem:deffields}.
\end{proof}

\begin{cor}
\label{cor:modp}  
Let $\mathcal{E}$, $\mathcal{E}'$ and $\mathcal{Y}$ be the minimal projective models over $\mathbb{Z}$ of the curves  over $\mathbb{Z}$ associated to the affine equations used to define $E$, $E'$ and $Y$.  We assume $q$ is a rational prime such that for $\mathcal{C} =  \mathcal{E}, \mathcal{E}'$ or $\mathcal{Y}$, $\mathcal{C}$ has good reduction $\mathcal{C}_q$ modulo $q$.  Suppose $q$ splits in the field $N = F(4^{1/3},3^{1/3})$ but does not split in the extension $N(\zeta^{1/3})$.  
\begin{enumerate}
\item[(i)]  The $3$-torsion subgroup of $\mathrm{Pic}^0(\mathcal{Y}_q)$ is isomorphic to $(\mathbb{Z}/3)^4 = (\mathbb{Z}/3)^{2 g(\mathcal{Y}_q)}$.
\item[(ii)] The point $0_{\mathcal{E}_q}$ at infinity associated to the affine model $\mathcal{E}_q:y^2 = x^3 - 3$ splits into two points $0_{\mathcal{Y}_q}$ and $0'_{\mathcal{Y}_q}$ via the morphism $\mathcal{Y}_q \to \mathcal{E}_q$ associated to the field embedding $(\mathbb{Z}/q)(\mathcal{E}_q) \subset (\mathbb{Z}/q)(\mathcal{Y}_q)$.  
\item[(iii)] The divisor class $[0_{\mathcal{Y}_q}] - [0'_{\mathcal{Y}_q}]$ in $\mathrm{Pic}^0(\mathcal{Y}_q)$ does not lie in $3 \cdot \mathrm{Pic}(\mathcal{Y}_q)$.
\end{enumerate}
\end{cor}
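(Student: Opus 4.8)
The plan is to obtain the three statements by reduction modulo $q$ from the characteristic-zero results of Lemmas~\ref{lem:picnice}, \ref{cor:ecor} and~\ref{lem:deffields}. The first observation is that ``$q$ splits in $N=F(4^{1/3},3^{1/3})$'' forces $q\nmid 6$, so $\mathcal{E}_q$, $\mathcal{E}'_q$ and $\mathcal{Y}_q$ are smooth of genera $1$, $1$ and $2$, all the covers among them keep their characteristic-zero ramification, the multiplication-by-$3$ maps are finite \'etale near $q$, and $\mathrm{Pic}^0$ of each is a finite group. The single principle I would use throughout is the following: for such $q$, if $Z$ is a finite \'etale scheme over the ring of integers of a number field localized at $q$ --- for instance the $3$-torsion group scheme of $\mathcal{E}$ or $\mathcal{E}'$, or the torsor of cube roots of a fixed degree-zero divisor class --- then reduction modulo a prime above $q$ is a bijection on geometric points, equivariant for $\mathrm{Gal}(\overline{\mathbb{Q}}_q/\mathbb{Q}_q)\twoheadrightarrow\mathrm{Gal}(\overline{\mathbb{F}}_q/\mathbb{F}_q)$; hence a point, torsion point, or cube root becomes $\mathbb{F}_q$-rational exactly when $q$ splits completely in the number field over which it is defined. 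In particular the proof of Lemma~\ref{lem:picnice} applies verbatim over $\mathbb{F}_q$ (it only uses the Klein four cover $\mathcal{Y}_q\to\mathbb{P}^1_{\mathbb{F}_q}$, valid since $q\neq 2$), and Lemma~\ref{lem:deffields}(i)--(iii) hold with each condition ``$\subset N$'' replaced by ``$q$ splits completely'' in the corresponding field.

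For part~(i): since $12^{1/3}=4^{1/3}3^{1/3}\in N$ and $(4/3)^{1/3}=4^{1/3}(3^{1/3})^2/3\in N$, the reduced forms of Lemma~\ref{lem:deffields}(i),(ii) give that $\mathrm{Pic}^0(\mathcal{E}_q)[3]$ and $\mathrm{Pic}^0(\mathcal{E}'_q)[3]$ each have order $9$. By the mod-$q$ form of Lemma~\ref{lem:picnice}, the map $\eta_*\times\eta'_*\colon\mathrm{Pic}^0(\mathcal{Y}_q)\to\mathrm{Pic}^0(\mathcal{E}_q)\times\mathrm{Pic}^0(\mathcal{E}'_q)$ has kernel and cokernel killed by $2$; since these are finite groups and $2$ is invertible modulo $3$, the map is an isomorphism on $3$-primary parts. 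Hence $\mathrm{Pic}^0(\mathcal{Y}_q)[3]\cong(\mathbb{Z}/3)^2\times(\mathbb{Z}/3)^2=(\mathbb{Z}/3)^4=(\mathbb{Z}/3)^{2g(\mathcal{Y}_q)}$.

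For part~(ii): over $x=\infty$ the double cover $\mathcal{E}_q\to\mathbb{P}^1_{\mathbb{F}_q}$ and the cover $\mathbb{F}_q(x^{1/2})/\mathbb{F}_q(x)$ are ramified, while $\mathcal{E}'_q\to\mathbb{P}^1_{\mathbb{F}_q}$ splits there, since at $w=0$ the equation $\tilde{y}^2=1-3w^3$ has the two rational solutions $\tilde{y}=\pm 1$. So in the Klein four cover $\mathcal{Y}_q\to\mathbb{P}^1_{\mathbb{F}_q}$ the inertia group at $x=\infty$ is the order-$2$ subgroup fixing $\mathcal{E}'_q$; consequently there are exactly two points $0_{\mathcal{Y}_q}$ and $0'_{\mathcal{Y}_q}$ of $\mathcal{Y}_q$ above the point $0_{\mathcal{E}_q}$ at infinity, and they are $\mathbb{F}_q$-rational because $\mathcal{Y}_q\to\mathcal{E}_q$ is unramified and split over the $\mathbb{F}_q$-point $0_{\mathcal{E}_q}$ (equivalently, their images under $\eta'$ are the $\mathbb{F}_q$-points $(0,\pm 1)$ of $\mathcal{E}'_q$). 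I would label them so that $\eta'(0_{\mathcal{Y}_q})=(0,1)=P_1$ and $\eta'(0'_{\mathcal{Y}_q})=(0,-1)=P_2$, matching Lemma~\ref{lem:deffields}(iii).

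For part~(iii): applying $\eta_*\times\eta'_*$ to $[0_{\mathcal{Y}_q}]-[0'_{\mathcal{Y}_q}]$, the $\eta_*$-component is $[0_{\mathcal{E}_q}]-[0_{\mathcal{E}_q}]=0$ and the $\eta'_*$-component is $[P_1]-[P_2]$; since $\eta_*\times\eta'_*$ is an isomorphism on $3$-primary parts, hence on mod-$3$ quotients, $[0_{\mathcal{Y}_q}]-[0'_{\mathcal{Y}_q}]$ lies in $3\cdot\mathrm{Pic}^0(\mathcal{Y}_q)$ if and only if $[P_1]-[P_2]$ lies in $3\cdot\mathrm{Pic}^0(\mathcal{E}'_q)$, and --- having degree $0$ --- equivalently in $3\cdot\mathrm{Pic}(\mathcal{Y}_q)$. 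By Lemma~\ref{cor:ecor}(ii) over $\mathbb{F}_q$, $[P_1]-[P_2]\in 3\cdot\mathrm{Pic}^0(\mathcal{E}'_q)$ if and only if some $Q\in\mathcal{E}'_q(\mathbb{F}_q)$ satisfies $3Q=P_1-P_2$. The computation in the proof of Lemma~\ref{lem:deffields}(iii) shows the nine such $Q$ over $\overline{\mathbb{Q}}$ all have coordinates in $M:=F((4/3)^{1/3},\zeta^{1/3})$, with one of them generating $M$ over $F$; since $[M:F]=9$ they form a single $\mathrm{Gal}(\overline{\mathbb{Q}}/F)$-orbit, so by the reduction principle such a $Q$ is $\mathbb{F}_q$-rational precisely when $q$ splits completely in $M$. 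Finally $M\cdot N=N(\zeta^{1/3})$ because $(4/3)^{1/3}\in N$, so, $q$ splitting in $N$ by hypothesis, ``$q$ does not split in $N(\zeta^{1/3})$'' is equivalent to ``$q$ does not split in $M$''; hence no such $Q$ exists and~(iii) follows. The step I expect to be the main obstacle is the reduction principle itself: making precise that at the good prime $q$ the finite \'etale schemes in play (the $3$-torsion schemes and the cube-root torsor) have matching geometric points and Frobenius actions in characteristics $0$ and $p$ --- a smooth base change or Hensel's lemma argument --- together with checking that the cube-root torsor has splitting field exactly $M$, i.e.\ that the nine cube roots form one Galois orbit over $F$.
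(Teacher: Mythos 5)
Your proposal is correct and follows essentially the same route as the paper: specialization of the $N$-rational $3$-torsion and of the cube-root torsor at a split prime, the mod-$q$ version of Lemma~\ref{lem:picnice} to control the $3$-primary part of $\mathrm{Pic}^0(\mathcal{Y}_q)$ via $\eta_*\times\eta'_*$, and Lemmas~\ref{cor:ecor} and~\ref{lem:deffields} to reduce (iii) to the non-splitting of $q$ in the splitting field of the cube roots of $P_1-P_2$. The only cosmetic difference is that you phrase the last step as non-splitting of $q$ in $M=F((4/3)^{1/3},\zeta^{1/3})$ over $F$ and then observe $MN=N(\zeta^{1/3})$, whereas the paper works directly with the \'etale divisor $[3]^{-1}(P_1)$ over $O_N[\tfrac16]$ and the inertness of $\mathfrak{q}$ in $N(\zeta^{1/3})/N$; these are the same argument.
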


\begin{proof} 
By Lemma \ref{lem:deffields}, the $3$-torsion on the general fibers of $\mathcal{E}$ and $\mathcal{E}'$ is defined over $N$, so these torsion points define sections of the natural morphisms $\mathcal{E}_N = \mathrm{Spec}(O_N) \otimes_{\mathbb{Z}} \mathcal{E} \to \mathrm{Spec}(O_N)$ and $\mathcal{E}'_N = \mathrm{Spec}(O_N) \otimes_{\mathbb{Z}} \mathcal{E}' \to \mathrm{Spec}(O_N)$.  Since $q$ must be larger than $3$, these sections specialize to distinct $3$-torsion points of the fibers of $\mathcal{E}_N$ and $\mathcal{E}'_N$ over a prime of $O_N$ over $q$.  Since $q$ splits in $O_N$, these fibers are isomorphic to $\mathcal{E}_q$ and $\mathcal{E}'_q$, respectively.  This shows part (i) because the same arguments used in the proof of Lemma \ref{lem:picnice} show the natural direct image homomorphism  $\mathrm{Pic}^0(\mathcal{Y}_q) \to \mathrm{Pic}^0(\mathcal{E}_q) \times \mathrm{Pic}^0(\mathcal{E}'_q)$ has kernel and cokernel equal to finite abelian groups annihilated by $2$.  Moreover, part (ii) follows from the corresponding fact on the generic fibers of $\mathcal{Y}$ and $\mathcal{E}$.  

Finally, for (iii), it suffices to show that the image $\mathfrak{d}$ of $[0_{\mathcal{Y}_q}] - [0'_{\mathcal{Y}_q}]$ in $\mathrm{Pic}^0(\mathcal{E}'_q)$ does not lie in $3 \cdot \mathrm{Pic}^0(\mathcal{E}'_q)$.  Setting $w = x^{-1}$ defines the affine model $\mathcal{E}'_q: \tilde{y}^2 = 1 - 3w^3$ when $\tilde{y}= y/x^{3/2}$.   Let $0_{\mathcal{E}'_q}$ be the point at infinity for this model.  Then $\mathfrak{d} = [P_{1,q}] - [P_{2,q}]$ when $P_{1,q} = (0,1)$ and $P_{2,q} = (0,-1)$ in $(w,\tilde{y})$ coordinates. Here $[P_{2,q}] = -[P_{1,q}]$ in $\mathrm{Pic}^0(\mathcal{E}'_q)$ and $(\zeta - 1)[P_{1,q}] = 0$ relative to the complex multiplication action of $\mathbb{Z}[\zeta] = O_F$ on $\mathcal{E}'_q$ defined by $\zeta (w,\tilde{y}) = (\zeta w ,\tilde{y})$.  So $[P_{1,q}]$ is a $3$-torsion point when we use $0_{\mathcal{E}'_q}$ as the origin of the group law of $\mathcal{E}'_q$.   Lemma \ref{cor:ecor} shows that $\mathfrak{d} = [P_{1,q}] - [P_{2,q}] $ lies in $3 \cdot \mathrm{Pic}(\mathcal{E}'_q)$ if and only if $P_{1,q} - P_{2,q} = 2 P_{1,q}$ lies in $3 \cdot \mathcal{E}'_q(\mathbb{Z}/q)$ relative to the group law of $\mathcal{E}'_q(\mathbb{Z}/q)$.  Since $P_{1,q}$ is a $3$-torsion point, this will be true if and only if $P_{1,q}$ lies in $3 \cdot \mathcal{E}'_q(\mathbb{Z}/q)$.  However, $P_{1,q}$ is the intersection of the fiber of $\mathcal{E}'_N$ over a chosen prime $\mathfrak{q}$ of $O_N$ over $q$ with the corresponding point $P_1$ on the general fiber of $\mathcal{E}'_N$, i.e., the point $P_1$ with $(w,\tilde{y})$ coordinates $(0,1)$.  Multiplication by $3$ defines an \'etale morphism of abelian schemes $\mathbb{Z}[\frac{1}{6}] \otimes \mathcal{E}'_N \to \mathbb{Z}[\frac{1}{6}] \otimes \mathcal{E}'_N$.  The pullback of the section defined by $P_1$ is a divisor $\mathcal{P}$ on $\mathbb{Z}[\frac{1}{6}] \otimes \mathcal{E}'_N$  which is \'etale over this section.  Therefore $\mathcal{P}$ must be a disjoint union of divisors of the form $\mathrm{Spec}(\mathcal{O})$ in which $\mathcal{O}$ is the integral closure of $\mathbb{Z}[\frac{1}{6}]$ in the residue field $L$ of a closed point $P'$ of $E'_N$ such that $3 \cdot P' = P_1$.  By Lemma \ref{lem:deffields}, $L$ must contain $N(\zeta^{1/3})$, and by construction $\mathfrak{q}$ is a prime of $O_N$ inert to $N(\zeta^{1/3})$ and $\mathfrak{q}$ is prime to $6$.  Any point of the fiber of $\mathcal{E}'_N$ over $\mathfrak{q}$ which when multiplied by $3$ gives $P_{1,q}$ must lie on the intersection of $\mathcal{P}$ with this fiber. Because $N(\zeta^{1/3}) \subset L$,  there is no point of this intersection with residue field $O_N/\mathfrak{q} = \mathbb{Z}/q$. This implies (iii). 
\end{proof} 

\begin{rem} 
By the Cebotarev density theorem, the set of rational primes $q$ that have the properties in Corollary $\ref{cor:modp}$  has Dirichlet density $1/18 - 1/54 = 1/27$.  The prime $q = 439$ is an example.\end{rem}

\begin{thm}  
\label{thm:genus2examples}
The equivalent conditions of Theorem $\ref{thm:cupsizeresult}$ need not hold if $C$ is allowed to have genus greater than $1$.  More specifically, let $n=\ell=3$. With the notation of Corollary $\ref{cor:modp}$, let $C$ be the curve $\mathcal{Y}_q$ and let $O$ be either one of the points $0_{\mathcal{Y}_q}$ or $0'_{\mathcal{Y}_q}$.  Let $\overline{\mathcal{E}_q} = \overline{k} \otimes_k \mathcal{E}_q$ and $\overline{C} = \overline{k} \otimes_k C$. There are normalized classes $\alpha\in \rH^1(\mathcal{E}_q,\mathbb{Z}/3)_{0_{\mathcal{E}_q}}$ and $[b]\in\rH^1(\mathcal{E}_q,\mu_3)_{0_{\mathcal{E}_q}}$ such that the restrictions $\overline{\alpha}$ and $\overline{[b]}$ to $\overline{\mathcal{E}_q}$ satisfy $\overline{\alpha}\cup_{\overline{\mathcal{E}_q}}\overline{[b]}\ne 0$ with respect to the pairing $\cup_{\overline{\mathcal{E}_q}}$ in $(\ref{eq:ohWeil!})$ on $\overline{\mathcal{E}_q}$. Let $\theta_q:C = \mathcal{Y}_q \to \mathcal{E}_q$ be the morphism associated to the construction of $\mathcal{Y}_q$.  The pullbacks $\theta_q^*(\alpha)\in \rH^1(C,\mathbb{Z}/3)$ and $\theta_q^*([b])\in\rH^1(C,\mu_3)$ are normalized classes at $O$, but 
$$\theta_q^*(\alpha) \cup \theta_q^*([b]) \neq \frac{1}{d(O)} \cdot \left( \overline{\theta_q^*(\alpha)}\cup_{\overline{C}}\overline{\theta_q^*([b])}\right)\cdot [O] \quad\mbox{in}\quad \mathrm{Pic}(C) /3\cdot \mathrm{Pic}(C) = \rH^2(C,\mu_3).$$
\end{thm}

\begin{proof}  
Let $k=\mathbb{F}_q$, and let $\overline{k}$ be a fixed algebraic closure of $k$.
By Corollary \ref{cor:modp}, the $3$-torsion of $\mathrm{Pic}(\mathcal{E}_q)$ is isomorphic to $\mathbb{Z}/3 \times \mathbb{Z}/3$. In particular, $\mathrm{Pic}(\mathcal{E}_q)[3]=\mathrm{Pic}(\overline{\mathcal{E}_q})[3]$, which implies $\tilde{\mu}_3\subset k^*$. Therefore, $\rH^1(\mathcal{E}_q,\mu_3)=\rH^1(\mathcal{E}_q,\mathbb{Z}/3)\otimes \tilde{\mu}_3$. Since the Weil pairing (\ref{eq:Weildef2}) on $\overline{\mathcal{E}_q}$ is non-degenerate, we can use (\ref{eq:Weildef}) and (\ref{eq:ohWeil!}), together with Lemmas \ref{lem:normalize} and \ref{lem:normalizedcharacter}, to find normalized classes $\alpha\in \rH^1(\mathcal{E}_q,\mathbb{Z}/3)_{0_{\mathcal{E}_q}}$ and $[b]\in\rH^1(\mathcal{E}_q,\mu_3)_{0_{\mathcal{E}_q}}$ such that the restrictions $\overline{\alpha}$ and $\overline{[b]}$ to $\overline{\mathcal{E}_q}$ satisfy $\overline{\alpha}\cup_{\overline{\mathcal{E}_q}}\overline{[b]}\ne 0$  in $\mathbb{Z}/3$, with respect to the pairing $\cup_{\overline{\mathcal{E}_q}}$ in (\ref{eq:ohWeil!}) on $\overline{\mathcal{E}_q}$.

Cup products respect restrictions and pullbacks by $\theta_q$, so we find that
\begin{equation}
\label{eq:relation0}
\overline{\theta_q^*(\alpha)}\cup_{\overline{C}}\overline{\theta_q^*([b])} = \theta_q^* ( \overline{\alpha}\cup_{\overline{\mathcal{E}_q}}\overline{[b]} )
\end{equation}
when $\theta_q^*$ on the right is the pullback map
\begin{equation}
\label{eq:twoisom}
\theta_q^*:\rH^2(\overline{\mathcal{E}_q},\mu_3) \to \rH^2(\overline{C},\mu_3).
\end{equation}
When we identify the domain and range of $\theta_q^*$ in (\ref{eq:twoisom}) with $\mathbb{Z}/3$, the map $\theta_q^*$ becomes multiplication by $2$, since $\theta_q:C \to \mathcal{E}_q$ is a degree two map of curves with constant field $k$.  This and (\ref{eq:relation0}) imply $\overline{\theta_q^*(\alpha)}\cup_{\overline{C}}\overline{\theta_q^*([b])}\ne 0$ in $\mathbb{Z}/3$. Suppose now that in fact,
\begin{equation}
\label{eq:nottrue}
\theta_q^*(\alpha) \cup \theta_q^*([b]) = \frac{1}{d(O)} \cdot \left( \overline{\theta_q^*(\alpha)}\cup_{\overline{C}}\overline{\theta_q^*([b])}\right)\cdot [O] \quad\mbox{in}\quad \mathrm{Pic}(C) /3\cdot \mathrm{Pic}(C) = \rH^2(C,\mu_3)
\end{equation}
where $d(O) = 1$. 
Let $\sigma$ be the non-trivial automorphism of $C$ over $\mathcal{E}_q$. Then the action of $\sigma$ fixes $\theta_q^*(\alpha)$ and $\theta_q^*([b])$ and it is equivariant with respect to cup products.  Moreover, $\sigma$ acts on $\rH^2(C, \mu_3) = \mathrm{Pic}(C) / 3\cdot \mathrm{Pic}(C)$ via the automorphism of $\mathrm{Pic}(C)$ induced by $\sigma$.   Hence (\ref{eq:nottrue}) would imply
\begin{eqnarray}
\label{eq:oppsey}
\theta_q^*(\alpha) \cup \theta_q^*([b])&=& \sigma\left(\theta_q^*(\alpha)\right) \cup \sigma\left(\theta_q^*([b])\right) = \sigma\left( \theta_q^*(\alpha) \cup \theta_q^*([b]) \right) = \sigma\left(\left( \overline{\theta_q^*(\alpha)}\cup_{\overline{C}}\overline{\theta_q^*([b])}\right)\cdot [O]\right)\nonumber\\
 &=& \left( \overline{\theta_q^*(\alpha)}\cup_{\overline{C}}\overline{\theta_q^*([b])}\right)\cdot [\sigma(O)].
\end{eqnarray}
Subtracting the right side of (\ref{eq:oppsey}) from the right side of (\ref{eq:nottrue}), using $d(O)=1$, we get
$$0 = \left( \overline{\theta_q^*(\alpha)}\cup_{\overline{C}}\overline{\theta_q^*([b])}\right)\cdot \left( [O]-[\sigma(O)]\right) \quad \mathrm{in} \quad \mathrm{Pic}(C) / 3\cdot \mathrm{Pic}(C).$$
Since $\overline{\theta_q^*(\alpha)}\cup_{\overline{C}}\overline{\theta_q^*([b])}\ne 0$ in $\mathbb{Z}/3$, this would force $[O] - [\sigma(O)]$ to lie in $3 \cdot \mathrm{Pic}(C)$. However, $\{O,\sigma({O)}\} = \{0_{\mathcal{Y}_q},0'_{\mathcal{Y}_q}\}$ and we have shown in Corollary \ref{cor:modp}  that the difference $[0_{\mathcal{Y}_q}]-[0'_{\mathcal{Y}_q}]$ is not in $3 \cdot \mathrm{Pic}(C)$.  So the contradiction shows that (\ref{eq:nottrue}) cannot be true.
\end{proof}

\end{document}